\newcommand{\ABS}[1]{{\left| #1 \right|}} % |1|
\newcommand{\PAR}[1]{{\left(#1\right)}} % (1)
\newcommand{\SBRA}[1]{{\left[#1\right]}} % [1]
\newcommand{\BRA}[1]{{\left\{#1\right\}}} % {1}
\newcommand{\NRM}[1]{{\left\Vert #1\right\Vert}} % ||1||
\newcommand{\leqs}{\leq_\mathrm{sto.}}
\newcommand{\geqs}{\geq_\mathrm{sto.}}
\newcommand{\ind}{\mathds {1}}
\newcommand{\dE}{\mathbb{E}}
\newcommand{\dN}{\mathbb{N}}
\newcommand{\dP}{\mathbb{P}}
\newcommand{\dR}{\mathbb{R}}
\newcommand{\cC}{{\mathcal C }}
\newcommand{\cF}{{\mathcal F }}
\newcommand{\cL}{{\mathcal L }}
\newcommand{\cX}{{\mathcal X }}
\newcommand{\N}{{\scriptscriptstyle N}}
\newtheorem{thm}{Theorem}[section]
\newtheorem{prop}[thm]{Proposition}
\newtheorem{lem}[thm]{Lemma}
\newtheorem{defi}[thm]{Definition}
\newtheorem{rem}[thm]{Remark}
\newtheorem{ex}[thm]{Example}
\newtheorem{hyp}[thm]{Hypothesis}
\title{Long time behavior of  telegraph processes under convex potentials}
\author{%
  Joaquin~\textsc{Fontbona}, %
  H\'el\`ene~\textsc{Gu\'erin}, %
  Florent~\textsc{Malrieu}}
\begin{document}

\maketitle

\tableofcontents

\begin{abstract} 
We study the long-time behavior of   variants of the telegraph process with position-dependent jump-rates, which result in a monotone gradient-like  drift toward the origin. We compute their invariant laws and obtain, via   probabilistic couplings arguments,  some quantitative  estimates of the total variation distance  to equilibrium. Our techniques extend ideas previously developed for  a simplified piecewise deterministic Markov model 
of bacterial chemotaxis. 
\end{abstract}

\noindent\textbf{MSC Classification 2010:} 60F17, 60G50, 60J05, 60J60.\\
\noindent\textbf{Key words and phrases:}  Piecewise Deterministic 
Markov Process, coupling, long time behavior, telegraph process, 
chemotaxis models, total variation distance.

\section{Introduction}

\subsection{The model and main results}

Piecewise Deterministic 
Markov Process (PDMP)
have been extensively studied in the last two decades (see~\cite{Da,MR1283589,Jacob} 
for general  background) and  have recently received renewed attention,  motivated by 
their  natural application   in  areas such as biology~\cite{rousset,doumic}, communication 
networks~\cite{robertetal} or reliability of complex systems, to name a few. Understanding the ergodic properties of these models, in particular  the rate at which they stabilize towards equilibrium, has in turn increased the interest in  the long-time behavior of PDMPs.

In this paper we pursue the study of these questions on  PDMP models of bacterial chemotaxis, initiated in  \cite{othmer,EO} by means of analytic tools,  and  deepened in \cite{FGM1} and  \cite{MM} on simplified versions that can be seen as variants of Kac's classic ``telegraph process'' \cite{kac}. 

We consider the simple PDMP of kinetic type ${(Z_t)}_{t\geq 0}={((Y_t,W_t))}_{t\geq 0}$
with values in  $\dR\times\BRA{-1,+1}$ and infinitesimal generator 
\begin{equation}\label{eq:geninfsimple}
Lf(y,w)=w\partial_yf(y,w)
+\PAR{a(y)\ind_\BRA{yw\leq 0}+b(y)\ind_\BRA{yw>0}}(f(y,-w)-f(y,w)),
\end{equation}
where $a$ and $b$ are nonnegative functions in $\dR$. That is, the continuous 
component $Y$ evolves according to 
$\frac{d Y_t}{dt}= W_t$ and represents the position of a particle on the real line,  
whereas the discrete component $W$ represents the velocity of the particle 
and jumps between $+1$ and $-1$, with instantaneous state-dependent 
rate given by $a(y)$ (resp. $b(y)$) if the particle at position $y$ approaches 
(resp. goes away from) the origin. This process describes, in a naive way, the motion of flagellated bacteria as a 
sequence of linear "runs", the directions of which randomly change at rates 
that depend on the position of the bacterium. The emergence of macroscopical drift is expected when
the response mechanism favors longer runs in specific directions (reflecting the  propensity to move for instance towards 
a source of nutriments).  We refer the reader to \cite{rousset} for a scaling limit of  the processes introduced in \cite{othmer,EO}  that   leads to  simplified models like  \eqref{eq:geninfsimple}.

In the particular case where the jump rates 
are constants such that $b>a>0$, the  convergence to equilibrium of the process   \eqref{eq:geninfsimple}  has been investigated  in  a previous work~\cite{FGM1}, where  fully explicit and asymptotically sharp (in the natural diffusive scaling limit of the process) bounds  were obtained. We also  refer  the reader to  \cite{MM}  for the study of related models in the circle, relying on  a  spectral decomposition, and  to \cite{Monmarche}  for a general approach to some kinetic models including the above one,  based on functional inequalities.

In the present work we will consider position dependent jump-rates  which throughout will be  assumed to satisfy: 
\begin{hyp}\label{hyp:ab}
Function $b$ (resp. $a$) is  measurable, even, 
non decreasing   (resp. non increasing) on $[0,+\infty)$, bounded from below by 
$\underline{b}>0$ (resp. $\underline{a}>0$). Moreover we assume that 
 $b(y)>a(y)$ for all $y\neq0$.
\end{hyp}

\noindent In the sequel, $\bar b$ stands for $\sup_{y>0} b(y)\in [\underline b,\infty]$ 
and $\mathrm{sgn}:\dR\to \BRA{-1,+1}$  denotes de function
\[
\mathrm{sgn}(y)=\ind_\BRA{y\geq 0}-\ind_\BRA{y<0}.
\]

Let us denote by $\mu_t^{y,w}$ the law of $Z_t=(Y_t,W_t)$ when issued 
from $Z_0=(y,w)$. The following is a our main result:

\begin{thm}[Convergence to equilibrium]\label{th:no-reflection}
There exists $\kappa>0$, $K>0$,  
and $\lambda>0$ such that for any $y,\tilde y\in\dR$ and 
$w,\tilde w\in\BRA{-1,+1}$, 
 \begin{equation}   \label{eq:var-no-reflection}
 \NRM{\mu_t^{y,w}-\mu_t^{\tilde y,\tilde w}}_{\mathrm{TV}}%
 \leq K e^{\kappa \ABS{y}\vee \ABS{\tilde y}} e^{-\lambda t}.
\end{equation}
\end{thm}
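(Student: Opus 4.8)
The plan is to bound the total variation distance by a coalescent coupling: if $(Z_t,\tilde Z_t)_{t\geq 0}$ is a coupling of the process started from $(y,w)$ and from $(\tilde y,\tilde w)$, with coalescence time $T=\inf\BRA{s\geq 0:\ Z_u=\tilde Z_u\ \text{for all}\ u\geq s}$, then $\NRM{\mu_t^{y,w}-\mu_t^{\tilde y,\tilde w}}_{\mathrm{TV}}\leq 2\,\dP_{(y,w),(\tilde y,\tilde w)}(T>t)$, so it suffices to build one coupling for which $\dP(T>t)\leq\tfrac12 Ke^{\kappa(\ABS y\vee\ABS{\tilde y})}e^{-\lambda t}$. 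I would combine two ingredients: a Lyapunov (confinement) estimate forcing both copies back to a fixed compact set quickly, with exponential moments, and a uniform minorization of the transition kernel on that compact set which lets the two copies coalesce with probability bounded below.

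For the confinement, I would exhibit $V(y,w)=e^{\theta\ABS y}g(\mathrm{sgn}(y)w)$ with $g(+1)>g(-1)>0$ and $\theta>0$ small such that
\begin{equation*}
 LV\leq-\gamma V+C\,\ind_\BRA{\ABS y\leq R_0}
\end{equation*}
for suitable constants $\gamma,C,R_0>0$. The drift inequality on $\BRA{\ABS y\geq R_0}$ reduces to the constant-rate case: once $R_0$ is fixed, Hypothesis~\ref{hyp:ab} gives $b(R_0)>a(R_0)$, and the monotonicity of $a,b$ together with $g(+1)>g(-1)$ yields, for $y\geq R_0$, $LV(y,\cdot)\leq L_0V(y,\cdot)$ where $L_0$ is the generator of the constant-rate model with rates $b(R_0)>a(R_0)$; the explicit eigenfunction computation for $L_0$ (as in \cite{FGM1}) then fixes $\theta,\gamma,g$ with $L_0V=-\gamma V$, and evenness of $a,b$ handles $y\leq-R_0$. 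Standard Foster--Lyapunov arguments then give, for the hitting time $\tau_{R_0}$ of $\BRA{\ABS y\leq R_0}$, an exponential moment $\dE_{y,w}\SBRA{e^{\gamma'\tau_{R_0}}}\leq C'e^{\theta\ABS y}$; running the two copies under independent jump clocks until both have entered the compact set controls $\tau_{R_0}^1\vee\tau_{R_0}^2$ by an exponential moment of order $e^{\theta(\ABS y\vee\ABS{\tilde y})}$.

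The core is a uniform minorization: there exist $t_0>0$, $\varepsilon_0>0$ and a probability measure $\nu$ on $\dR\times\BRA{-1,+1}$ such that $P_{t_0}\bigl((y,w),\cdot\bigr)\geq\varepsilon_0\,\nu$ for every $(y,w)$ with $\ABS y\leq R_0$. This encodes that, over a time $t_0$, the process spreads out: from any point of the compact set it reaches, with probability bounded below, a neighborhood of $(0,+1)$, and from there the deterministic transport interleaved with jumps --- whose rates are pinched between $\underline a\wedge\underline b>0$ and $b(R_0)$ on the compact set --- produces an absolutely continuous component of the law of the position with density bounded below on a fixed interval. Granting this, two copies sitting in the compact set are coupled so that, at time $t_0$, with probability at least $\varepsilon_0$ they both equal a common $\nu$-distributed point, and are then driven by shared jump clocks so as to remain equal forever; if the attempt fails, the strong Markov property allows a fresh attempt once both copies have returned to the compact set.

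Putting the two ingredients together, $T$ is stochastically dominated by $\tau_{R_0}^1\vee\tau_{R_0}^2$ plus a geometric$(\varepsilon_0)$ number of independent rounds of length $\asymp t_0$, each failed round being followed by a return to the compact set whose duration has an exponential moment bounded uniformly in the starting position; a routine computation with these exponential moments --- or, equivalently, the Harris/Meyn--Tweedie geometric ergodicity theorem with Lyapunov function $V$ and small set $\BRA{\ABS y\leq R_0}$ --- then yields $\dP_{(y,w),(\tilde y,\tilde w)}(T>t)\leq\tfrac12 Ke^{\kappa(\ABS y\vee\ABS{\tilde y})}e^{-\lambda t}$, which is \eqref{eq:var-no-reflection}. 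The delicate point, I expect, is the minorization: the transition kernel of \eqref{eq:geninfsimple} carries an atom along the jump-free trajectory and is not smoothing on its own, so one must exhibit by hand a genuinely diffuse part of $P_{t_0}$ and bound it below uniformly over the compact set --- this is precisely where the monotonicity and strict positivity built into Hypothesis~\ref{hyp:ab} enter, and where going beyond the constant-rate analysis of \cite{FGM1} requires the most care.
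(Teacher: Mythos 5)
Your proposal is sound in outline but follows a genuinely different route from the paper. You run the classical Harris/Meyn--Tweedie scheme directly on the unreflected process: a Lyapunov function $V(y,w)=e^{\theta\ABS{y}}g(\mathrm{sgn}(y)w)$ (which is essentially the paper's $f(x,v)=e^{\alpha x+\beta v}$ in Proposition~\ref{prop:zero}, and your computation of admissible $\theta,g$ is correct since $a(R_0)<b(R_0)$), plus a uniform minorization $P_{t_0}((y,w),\cdot)\geq\varepsilon_0\nu$ over the whole compact set $[-R_0,R_0]\times\BRA{-1,+1}$. The paper instead reflects the process at the origin, couples the reflected copies at their first crossing time, and then sticks them by an explicit coupling of the jump-time laws (Lemmas~\ref{lem:comparTx+} and \ref{lem:comparTx-}) driven by an embedded discrete chain $(\Phi_n)$ of ``rectangle corners''; regeneration for the unreflected process is obtained only at the origin, where the symmetry $\cL((Y_t,W_t)\vert Y_0=0,W_0=w)=\cL((-Y_t,W_t)\vert Y_0=0,W_0=-w)$ reduces the Doeblin step to overlapping the explicit one-jump and two-jump densities $h_{-1},h_1$ from $(0,\pm1)$. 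What your approach buys is generality and brevity: no reflection, no bookkeeping of crossing/bouncing rectangles, and the conclusion follows from the standard geometric ergodicity theorem. What the paper's approach buys is constants traceable to $a$ and $b$ at every step, and an iterative coupling that it argues is more efficient than the generic small-set regeneration. The one point you should not leave as a remark is the minorization itself: it is the analogue, uniformly over $[-R_0,R_0]\times\BRA{-1,+1}$, of the paper's computation of $\varepsilon_{t_0}>0$, and it does require writing out the exactly-one-jump (for initial velocity pointing away from the common target interval) and exactly-two-jump components of $P_{t_0}$, checking that for $t_0>2R_0$ the resulting position densities are bounded below on a common interval with a common terminal velocity, using $\underline a\wedge\underline b>0$ and the boundedness of $a,b$ on compacts. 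You have correctly identified the mechanism, so this is a matter of carrying out a computation the paper performs only from the origin, not a flaw in the argument.
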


  The constants above can be expressed in terms of the functions $a$ and $b$ 
following the lines of the proof. We  will  try to provide as explicit as possible bounds in each of its steps.  

The  proof of Theorem  \ref{th:no-reflection} relies on a probabilistic coupling argument,
 reminiscent of Meyn-Tweedie-Foster-Lyapunov techniques, see \cite{MT2,lindvall}.
Variants of this type of methods have  been developed in several previous works  on  specific instances of  PDMP \cite{BLMZ,tcp,azais,TAF}.
 The model 
under study in the present paper is harder to deal with, since the vector fields that 
drive the continuous part are not contractive.

 Our approach will be based  on extensions of  some ideas and methods  developed  in  \cite{FGM1}. However,  due to the non constant jump-rates  we will have to  work with explicit couplings of  
 jump-times,  for  two copies of the process found at different positions.  Moreover, we will need to make  use of some discrete time Markov  process embedded   in their trajectories (reminiscent of \cite{BCF}),  in order  to obtain  controls of the global coupling time.  These additional technicalities prevent us from getting estimates  as explicit   as in \cite{FGM1}.

\medskip

Before delving into the proof of Theorem  \ref{th:no-reflection}, we point out the explicit form of   the equilibrium of the process  $(Y,W)$  and its relation to  one dimensional  diffusion processes in a convex potential.

\begin{prop}[Invariant distribution]\label{Invariant distribution}
The invariant distribution of $(Y,W)$ on $\dR\times \BRA{-1,+1}$ is given by
\[
\mu(dy,dw)=\frac{1}{C_F}e^{-F(y)}dy 
 \otimes \frac{1}{2}(\delta_{-1}+\delta_{+1})(dw)
\]
where   $C_F:=\int_{\dR}\! e^{-F(y)}\,dy<\infty$ and $F$ is the convex function
\begin{equation}\label{eq:defF}
 y\in\dR\mapsto F(y)=\int_0^y\!\mathrm{sgn}(z)\PAR{b(z)-a(z)}\,dz. 
\end{equation}
The domain of the  Laplace transform  of $\mu$ is 
$(-\bar b+\underline a,\bar b-\underline a)\times \BRA{-1,+1}$. 
\end{prop}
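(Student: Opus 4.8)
The plan is to verify directly that $\mu$ is stationary, i.e. that $\int Lf\,d\mu=0$ for all $f$ in a determining class of test functions; it suffices to take $f(\cdot,w)\in C^1_c(\dR)$ for $w=\pm1$, which is enough to characterise invariance for a kinetic PDMP of this form (see \cite{Da}). Rather than guessing the shape of $\mu$, I would start from the ansatz $\mu(dy,dw)=\rho(y)\,dy\otimes\frac12(\delta_{-1}+\delta_{+1})(dw)$, symmetric in $w$ — natural in view of the symmetry $(y,w)\mapsto(-y,-w)$ of the dynamics, since the jump rate $a(y)\ind_\BRA{yw\le 0}+b(y)\ind_\BRA{yw>0}$ is invariant under it (as $a,b$ are even) — and let the computation pin down $\rho$.

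Writing $\lambda(y,w)=a(y)\ind_\BRA{yw\le 0}+b(y)\ind_\BRA{yw>0}$ for the flip rate, so that $Lf(y,w)=w\partial_yf(y,w)+\lambda(y,w)(f(y,-w)-f(y,w))$, I would plug the ansatz into $\int Lf\,d\mu$, integrate the transport term by parts in $y$ (boundary terms vanish by compact support) and regroup the jump term, obtaining
\begin{equation*}
\int Lf\,d\mu=\frac12\int_\dR\PAR{f(y,+1)-f(y,-1)}\SBRA{-\rho'(y)+\PAR{\lambda(y,-1)-\lambda(y,+1)}\rho(y)}\,dy.
\end{equation*}
A short case check gives $\lambda(y,-1)-\lambda(y,+1)=a(y)-b(y)=-F'(y)$ for $y>0$ and $=b(y)-a(y)=-F'(y)$ for $y<0$, since $F'(y)=\mathrm{sgn}(y)(b(y)-a(y))$ by \eqref{eq:defF}. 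Hence the bracket vanishes for a.e.\ $y$ precisely when $\rho'=-F'\rho$, i.e.\ $\rho(y)=\rho(0)e^{-F(y)}$; as the differences $f(\cdot,+1)-f(\cdot,-1)$ exhaust $C^1_c(\dR)$, this is both necessary and sufficient, and normalisation $\int_\dR\rho=1$ forces $\rho(0)=1/C_F$. Finiteness of $C_F$ uses Hypothesis \ref{hyp:ab}: on $(0,\infty)$ the map $b-a$ is nondecreasing (difference of a nondecreasing and a nonincreasing function) and positive away from $0$, so $b(y)-a(y)\ge b(1)-a(1)=:\delta>0$ for $y\ge1$, whence $F(y)\ge F(1)+\delta(y-1)$; by evenness of $F$ the same linear growth holds as $y\to-\infty$, so $e^{-F}\in L^1(\dR)$.

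It remains to record the properties of $F$ and the Laplace domain. $F$ is even by the substitution $z\mapsto-z$ together with $a,b$ even; it is convex because $F'=\mathrm{sgn}(\cdot)(b-a)$ is nondecreasing on $(0,\infty)$, nondecreasing on $(-\infty,0)$ (using evenness of $b-a$ and that $|y|$ decreases as $y\uparrow0$), with $F'(0^-)\le0\le F'(0^+)$. For the Laplace transform, the $w$-marginal $\frac12(\delta_{-1}+\delta_{+1})$ is finitely supported and imposes no constraint, so one is reduced to $\theta\mapsto\int_\dR e^{\theta y}e^{-F(y)}\,dy$; since $F'(y)=b(y)-a(y)\to\bar b-\underline a$ as $y\to+\infty$ one has $F(y)/y\to\bar b-\underline a$, and a comparison argument shows $e^{\theta y-F(y)}\in L^1$ near $+\infty$ exactly when $\theta<\bar b-\underline a$ (at the endpoint $F(y)-\theta y$ is nonincreasing, so the integral diverges), while evenness of $F$ gives the symmetric constraint $\theta>\underline a-\bar b$ at $-\infty$; together this is the stated interval. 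Uniqueness of the invariant law — so that it is legitimate to speak of \emph{the} invariant distribution — follows from the convergence in Theorem \ref{th:no-reflection} (or from irreducibility of the process). The only point needing a little care is the very first one, namely that annihilation of $\mu$ by $L$ on $C^1_c$ test functions implies stationarity; this is classical for PDMPs of kinetic type, and everything else is an elementary computation together with the monotonicity of $a,b$.
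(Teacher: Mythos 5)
Your argument is correct and is essentially the paper's own proof: the key computation is the same summation $Lf(y,+1)+Lf(y,-1)$, the identification $\lambda(y,-1)-\lambda(y,+1)=-F'(y)$, and the integration by parts against $e^{-F}$, merely organized as a derivation of the density from an ansatz rather than a verification of the stated formula. The extra details you supply (convexity of $F$, the endpoint behaviour of the Laplace transform, and the appeal to Theorem \ref{th:no-reflection} for uniqueness) are all consistent with what the paper asserts.
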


\begin{ex}[Laplace and Gaussian equilibria]
If $a$ and $b$ are constant functions, then 
\[
\mu(dy,dw)=\frac{b-a}{2}e^{-(b-a)\ABS{y}}dy\otimes \frac{1}{2}(\delta_{-1}+\delta_{+1})(dw).
\]
If $a$ is a constant function and $b$ is the map $y\mapsto a+|y|$, then 
\[
\mu(dy,dw)=\frac{1}{\sqrt{2\pi}}e^{-y^2/2}dy\otimes \frac{1}{2}(\delta_{-1}+\delta_{+1})(dw).
\]
Figure \ref{fi:loivrai} compares in the latter case the empirical law of $Y_t$ to 
its invariant measure at increasing time instants.
\begin{figure}[!h]
\begin{center}
 \includegraphics[scale=0.2]{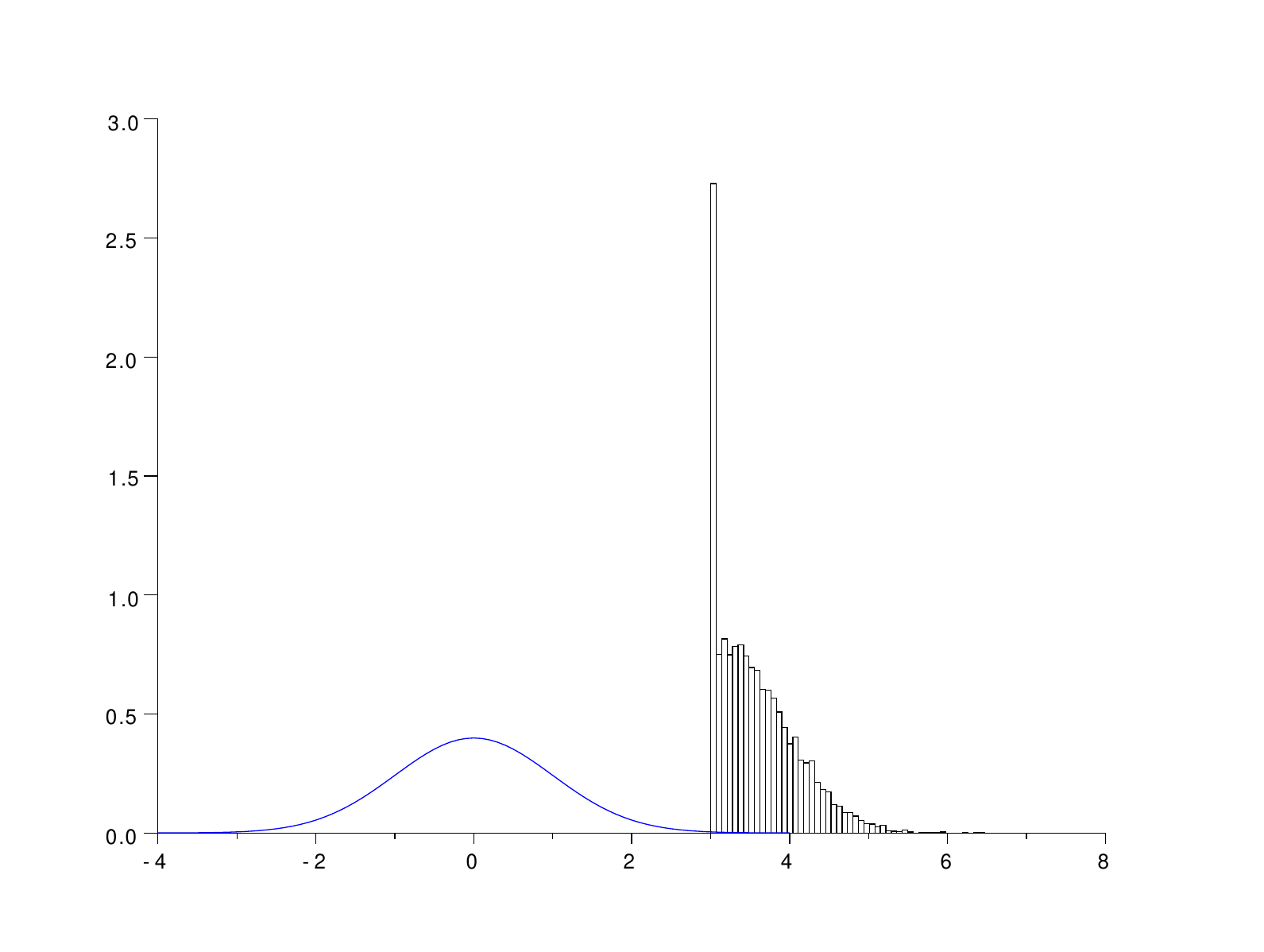}
 \includegraphics[scale=0.2]{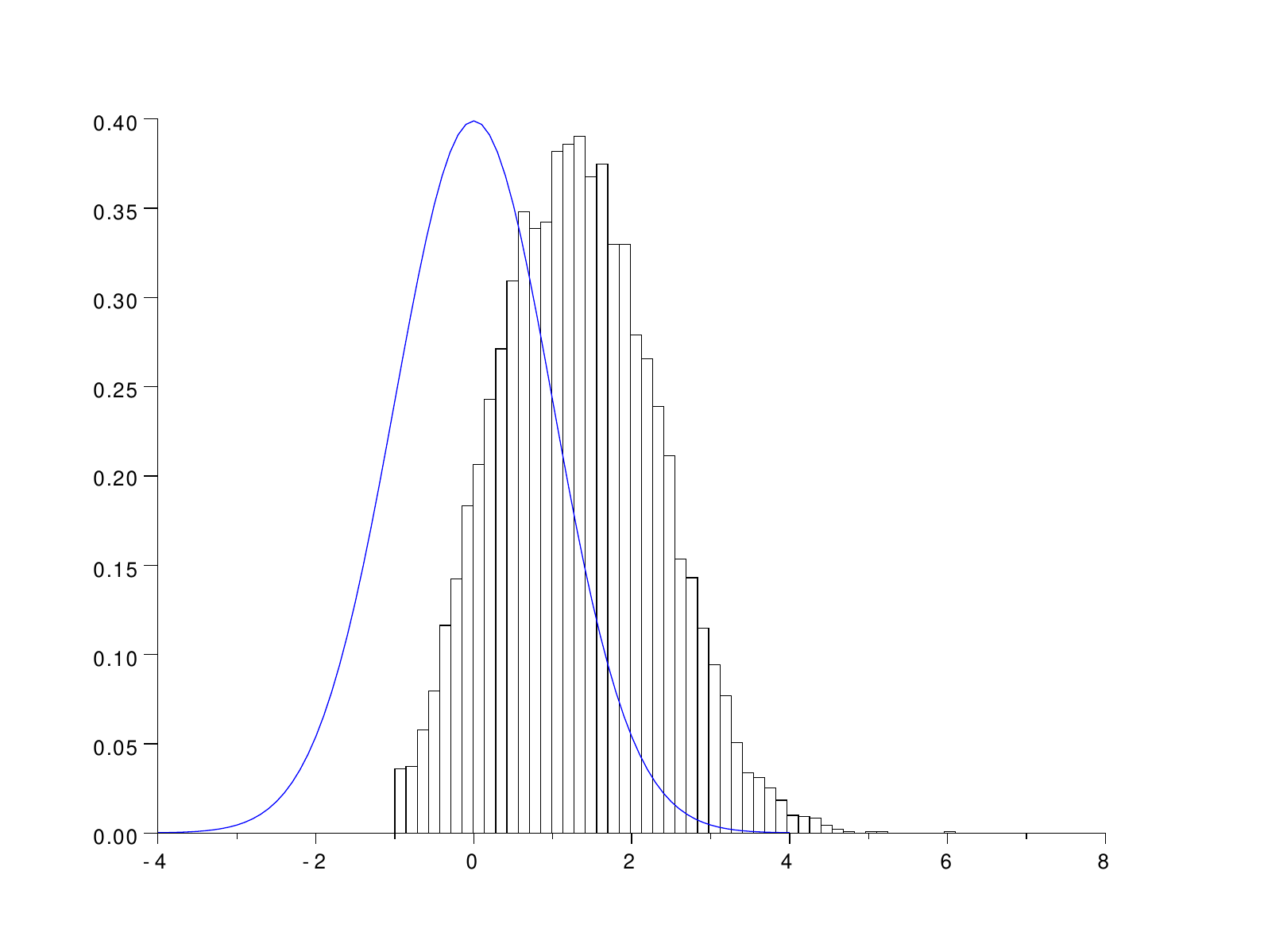}
 \includegraphics[scale=0.2]{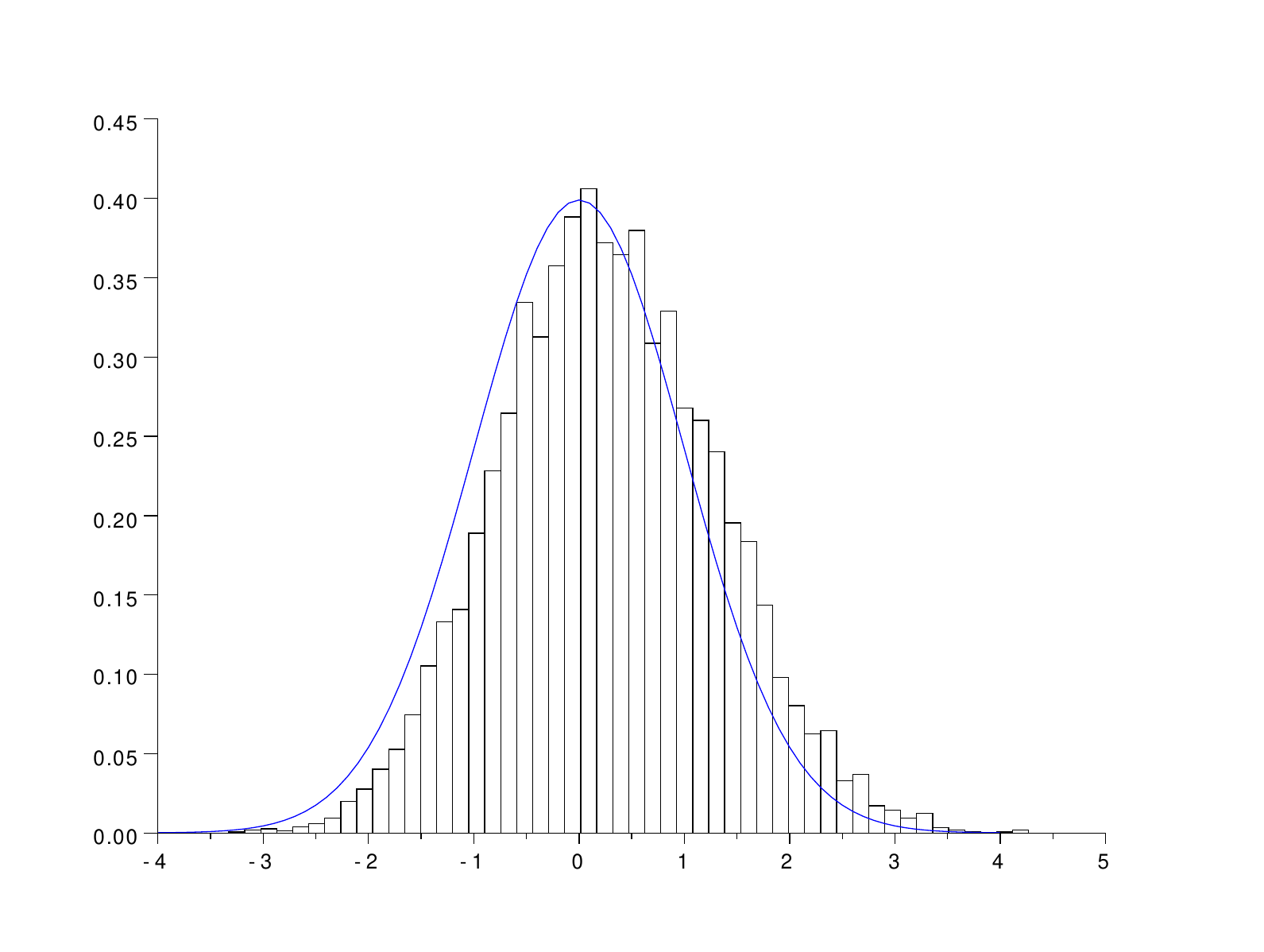}\\
 \includegraphics[scale=0.2]{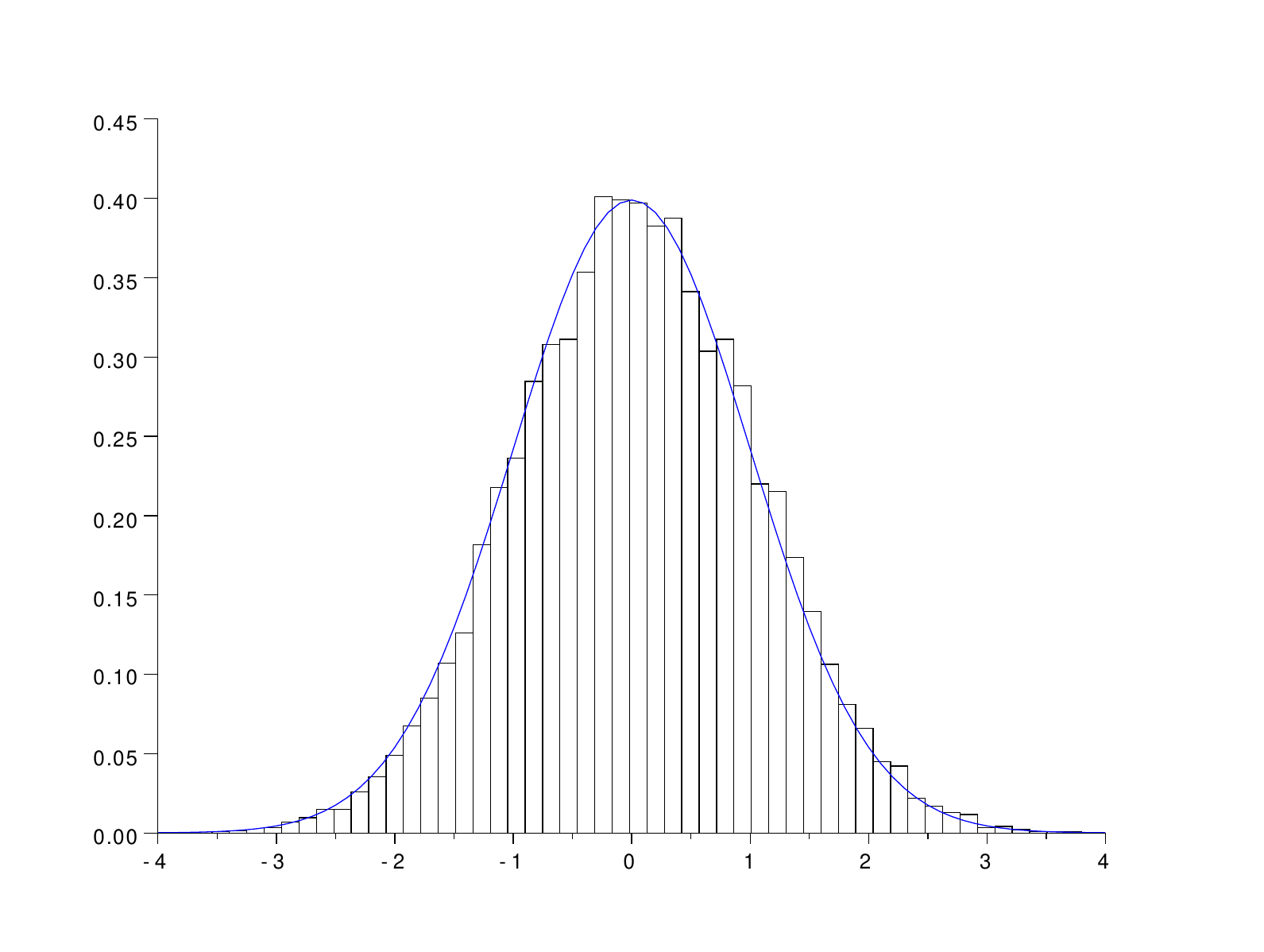}
 \includegraphics[scale=0.2]{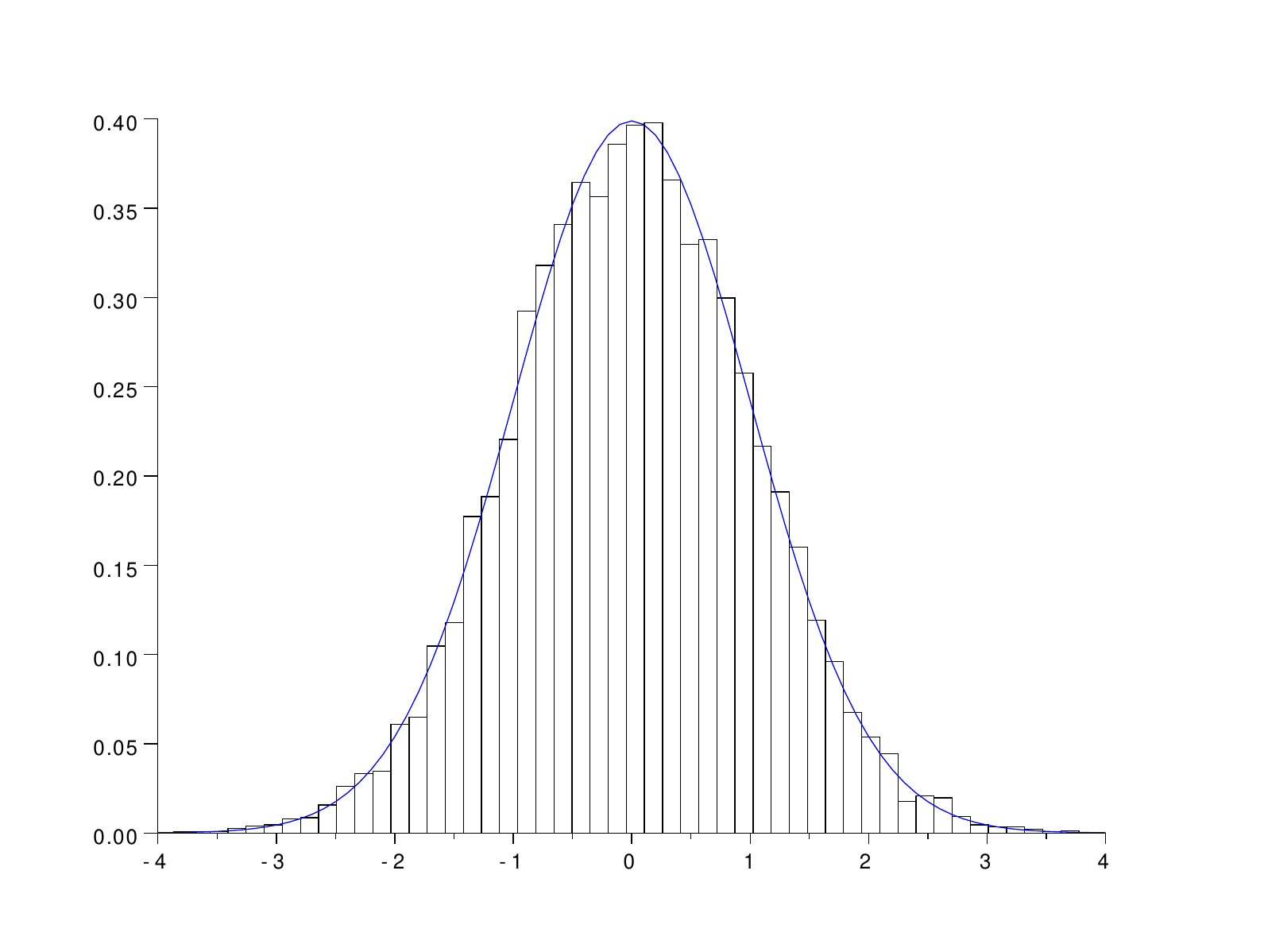}
 \includegraphics[scale=0.2]{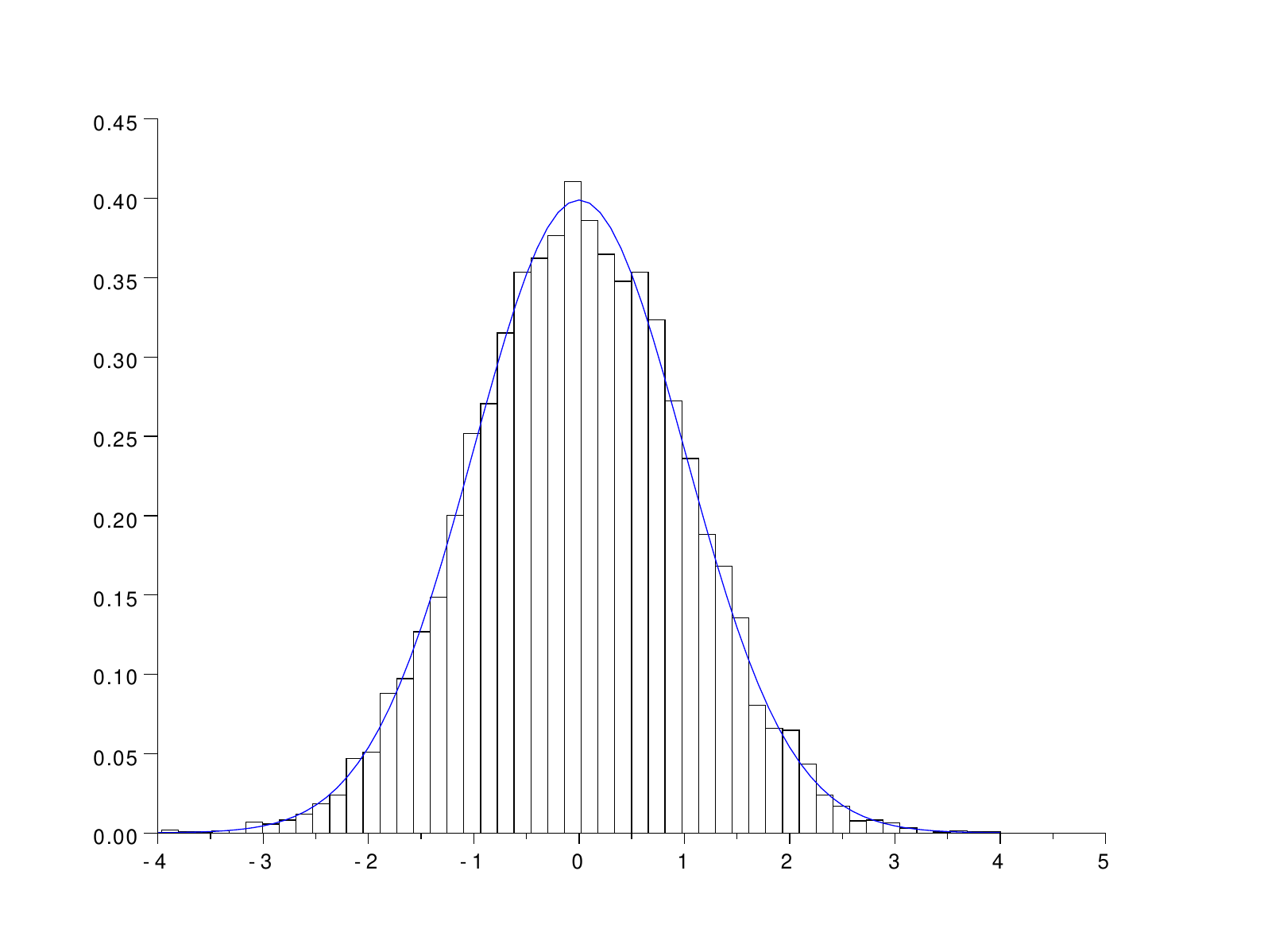}
  \caption{Empirical distribution of $Y_t$ starting at $(5,-1)$ for
  $t\in\BRA{2,6,10,14,18,22}$ with $a(y)=1$ and $b(y)=1+\ABS{y}$.}
   \label{fi:loivrai}
\end{center}
\end{figure}
\end{ex}

\begin{proof}[Proof of Proposition \ref{Invariant distribution}]
We first note that the constant $C_F$ is finite as soon as the functions $a$ and $b$ satisfy 
Hypothesis \ref{hyp:ab}, since $z\mapsto b(z)-a(z)$ is non decreasing and positive on 
$(0,\infty)$. Furthermore, 
\[
\lim_{y\to+\infty}\frac{1}{y}\int_0^y\! (b(z)-a(z))\,dz = \bar b-\underline a. 
\]
This ensures  that the Laplace transform of $\mu$ is finite on 
$(-\bar b+\underline a,\bar b-\underline a)\times \BRA{-1,+1}$ and  infinite on the complement. For any function $f\in\cC^1$ on $\dR\times \BRA{-1,+1}$ with compact support
one has, from the definition of $F$,
\begin{align*}
Lf(y,1)+Lf(y,-1)&=\partial_{y}(f(y,1)-f(y,-1))
-\mathrm{sgn}(y)\PAR{b(y)-a(y)}\PAR{f(y,1)-f(y,-1)}\\
&=\partial_{y}(f(y,1)-f(y,-1))-F'(x)\PAR{f(y,1)-f(y,-1)}.
\end{align*}
An integration by parts ensures that 
\[
\int\!\PAR{\partial_{y}f(y,1)-\partial_{y}f(y,-1)}e^{-F(y)}\,dy=
\int\!\PAR{f(y,1)-f(y,-1)} F'(y)e^{-F(y)}\,dy, 
\]
which yields
$\int\! Lf(y,w)\,\mu(dy,dw)=0$. 
In other words, $\mu$ is an invariant measure for $L$.
\end{proof}

The next result is independent of the previous and can be seen as a generalization of  the renormalisation of the telegraph process  studied by Kac \cite{kac} (see also
\cite{herrmann}, \cite{FGM1}). It shows that, under the suitable scaling,  process  \eqref{eq:geninfsimple} behaves like the   diffusion processes expected from  Proposition \ref{Invariant distribution}:

\begin{thm}[Diffusive scaling]\label{prop:difulimit}
For each  $N\geq 1$ let   $a_\N, b_N:\dR\to \dR_+$   be jump-rates satisfying Hypothesis~\ref{hyp:ab} 
such that
$y\mapsto a_N(y)+b_N(y)$ is of class ${\cal C}^1$ and
 \begin{enumerate}
\item[i)] $a_\N(0)+b_\N(0)\to \infty$,
\item[ii)] $b_\N-a_\N\to2 c_1$ and $\frac{a_N'+b_N'}{a_{N}+b_N}\to2c_2$ locally  uniformly  for some functions
 $c_1,c_2:\dR\to \dR$
\end{enumerate}
when $N \to \infty$.
Let $(Y^{(\N)}_t, W^{(\N)}_t)_{t\geq 0}$ denote the  
process driven by \eqref{eq:geninfsimple} with  $a=a_\N, b=b_\N$ and assume that $Y_0^{(\N)}\to \xi_0$  in law as $N\to \infty$.  Then, the sequence of processes 
\[
\left(\xi^{(\N)}_\cdot\right)_{N\geq 1}:=\left(Y^{(\N)}_{\tau_{\N}(\cdot)}\right)_{N\geq 1},
\]
with  $\tau_{\N}$  the  solution of $ 
\tau'_{\N}(t)=\frac{1}{2}(a_{\N}(Y^{(N)}_{\tau_{\N}(t)})+b_{\N}(Y^{(N)}_{\tau_{\N}(t)})), 
\, \tau_{\N}(0)=0,
$
weakly  converges  in $C([0,\infty), \dR)$ when  
$N\to \infty$  to the solution $\xi_t$ of the stochastic differential equation 
\begin{equation}\label{eq:bang}
d\xi_t=dB_t-\PAR{\mathrm{sgn}(\xi_t) c_1(\xi_t)+c_2(\xi_t)}\,dt  \, ,
\end{equation} 
where $(B_t)_{t\geq 0}$ is  a standard Brownian 
motion independent from $\xi_0$.
\end{thm}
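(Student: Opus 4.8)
The plan is to prove weak convergence on $C([0,\infty),\dR)$ by the standard martingale-problem route: identify a martingale problem solved by the time-changed processes $\xi^{(\N)}$, establish tightness of the laws, and show that any limit point solves the martingale problem associated with the SDE \eqref{eq:bang}, which has a unique (weak) solution since its drift $y\mapsto \mathrm{sgn}(y)c_1(y)+c_2(y)$ is bounded on compacts and the diffusion coefficient is constant. First I would write down, for $f\in\cC^2_c(\dR)$ and the (uncscaled) two-component process $(Y^{(\N)},W^{(\N)})$, the Dynkin martingale coming from $L=L_\N$ in \eqref{eq:geninfsimple}, then perform the deterministic time change $t\mapsto \tau_\N(t)$. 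Because $\tau_\N'(t)=\tfrac12(a_\N+b_\N)(Y^{(\N)}_{\tau_\N(t)})$ is exactly the total jump rate up to the factor $\tfrac12$, the generator seen by $\xi^{(\N)}_t=Y^{(\N)}_{\tau_\N(t)}$ in the new clock becomes, on test functions depending only on $y$,
\[
\mathcal{A}_\N g(y,w)=\frac{2w}{a_\N(y)+b_\N(y)}\,g'(y),
\]
which vanishes to leading order; the content is in the next-order term.

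The core computation is the usual ``homogenization of the velocity'' step: apply the generator to a corrector of the form $f(y)+w\,\phi_\N(y)$, where $\phi_\N$ is chosen so that the fast ($w$-flipping) part of $\mathcal{A}_\N$ cancels the $w\,g'(y)$ term. Solving the cell problem gives $\phi_\N(y)\asymp \big(a_\N(y)+b_\N(y)\big)^{-1}f'(y)$ up to the asymmetry correction carried by $b_\N-a_\N$ and by the sign of $yw$. Plugging this back in and using assumption ii)—$b_\N-a_\N\to 2c_1$ and $(a_\N'+b_\N')/(a_\N+b_\N)\to 2c_2$ locally uniformly—together with i), which forces $1/(a_\N+b_\N)\to 0$ and thus kills the error terms, one finds that
\[
M^{(\N)}_t:=f(\xi^{(\N)}_t)-f(\xi^{(\N)}_0)-\int_0^t\Big(\tfrac12 f''(\xi^{(\N)}_s)-\big(\mathrm{sgn}(\xi^{(\N)}_s)c_1(\xi^{(\N)}_s)+c_2(\xi^{(\N)}_s)\big)f'(\xi^{(\N)}_s)\Big)ds
\]
is, modulo a remainder $R^{(\N)}_t$ that is a martingale increment plus a term converging to $0$ uniformly on compact time intervals (in probability), a martingale. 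Handling the $\mathrm{sgn}(y)$ kink of the limiting drift requires a small amount of care: one checks that the contribution of a neighbourhood of the origin is negligible because the occupation time near $0$ is controlled (the process crosses $0$ with bounded speed in the time-changed clock only on a set of vanishing measure as the rates blow up), so no local-time term appears.

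For tightness I would use Aldous's criterion (or the Kurtz criterion via the above near-martingale decomposition): the predictable finite-variation parts have locally bounded derivatives once one localizes $\xi^{(\N)}$ to a compact set, and the martingale parts have quadratic variation of order $\int_0^t \big(a_\N+b_\N\big)^{-1}(\xi^{(\N)}_s)\,d\langle\cdot\rangle$, which is $O(t)$ with the right constant $1$ in the limit; a standard localization/stopping argument upgrades local bounds to genuine tightness on $C([0,\infty),\dR)$. Combining tightness with the martingale-problem identification and uniqueness of the limit SDE yields the claimed weak convergence. The main obstacle, and the step deserving the most care, is the corrector/cell-problem estimate: one must quantify that the error between $\mathcal{A}_\N(f+w\phi_\N)$ and the limiting generator is controlled purely by $(a_\N+b_\N)^{-1}$ and by the local-uniform convergences in ii), uniformly over $\xi^{(\N)}$ ranging in a compact set, and simultaneously argue that excursions of $\xi^{(\N)}$ to large $|y|$ are rare (using a Lyapunov function for the limiting convex-potential diffusion, i.e. $e^{\kappa|y|}$ as in Proposition \ref{Invariant distribution}) so that the localization is harmless.
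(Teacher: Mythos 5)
Your proposal follows essentially the same route as the paper: a corrector of size $(a_\N+b_\N)^{-1}$ to homogenize the velocity, reduction to a martingale problem, and a convergence theorem for semimartingale characteristics (the paper applies the corrector only to the test functions $y$ and $(y+\kappa(y)w)^2$ and then invokes Theorem~4.1 of \cite[p.~354]{ethier}, which delivers tightness and identification in one stroke, rather than working with general $f\in\cC^2_c$ plus Aldous's criterion). One slip to fix: the time change $\tau_\N'=\frac{1}{2}(a_\N+b_\N)$ \emph{speeds up} the clock, so the generator of $\xi^{(\N)}$ acting on functions of $y$ alone is $\frac{1}{2}(a_\N(y)+b_\N(y))\,w\,g'(y)$, not $\frac{2w}{a_\N(y)+b_\N(y)}\,g'(y)$; it blows up rather than vanishes, which is precisely why the corrector is needed, and your subsequent scalings (corrector of order $(a_\N+b_\N)^{-1}$, quadratic variation converging to $t$) are consistent with the correct formula, so the error does not propagate.
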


\begin{rem}[Diffusion in a convex potential]
The drift term in~\eqref{eq:bang} is odd since $a_N$ and $b_N$ are even.
Notice from point  ii) above  that 
$\frac{a_N(y)+b_N(y)}{a_N(0)+b_N(0)}=e^{2\int_0^y c_2(s)ds + o(1)}$ 
uniformly on compact sets as $N\to \infty$. Hence 
$a_N(y)+b_N(y)\to \infty$ for all $\geq 0 $ by i), and $c_2=0$ if and 
only if $a_N(y)+b_N(y)\sim a_N(0)+b_N(0)$  for each $y\geq 0$. 
Thus, any  diffusion with generator of the form $\frac{1}{2}f''(y) -U'(y)f'(y)$ 
for an even convex potential $U$ can be obtained as a limit, taking for 
instance $a_N(y)=a_N(0)\longrightarrow\infty$ and $b_N(y)=a_N(0)+ 2U'(y)$.
\end{rem}

The remainder of the paper is organized as follows.  In the next subsection we briefly recall generalities on  the coupling approach to  long-time convergence in total variation distance. We also  define therein the reflected version of process   \eqref{eq:geninfsimple}, a detailed study of which is  crucial for proving  Theorem \ref{th:no-reflection}.  Section~\ref{sec:times} is devoted to the study of jump and hitting
times of the latter process. A coalescent coupling for it is then constructed  in Section~\ref{sec:coalescent-refl}  and the corresponding convergence estimates are established. The proof of Theorem~\ref{th:no-reflection} is achieved in Section~\ref{sec:non-reflected}. Finally, Theorem \ref{prop:difulimit}  is proved in 
Section~\ref{sec:diffus}.

%%%%
\subsection{Preliminaries }\label{sec:prelim}
In the sequel we will use the notation 
$\overset{\cL}{=}$ meaning ``equal in law to'' and $E(\lambda)$ for an exponential 
random variable with mean $1/\lambda$.  

Recall that the total variation distance between two probability measures
$\eta$ and $\tilde \eta$ on a measurable space $\cX$ is given by
\begin{equation}\label{eq:def-var-tot}
\NRM{\eta-\tilde \eta}_{\mathrm{TV}}=
\inf\BRA{\dP(X\neq \tilde X)\, :\, X,\tilde X 
\mbox{ random variables  with }\cL(X)=\eta,\ \cL(\tilde X)=\tilde \eta}.
\end{equation}
If $\eta$ and $\tilde \eta$ are absolutely continuous 
with respect to a measure $\nu$ with respective densities $f$ and $\tilde f$ 
then 
\[
\NRM{\eta-\tilde\eta}_{\mathrm{TV}}=\frac{1}{2}\int\ABS{f-\tilde f}\,d\nu
=1-\int\! f\wedge \tilde f\,d\nu.
\]
See \cite{lindvall} for alternative definitions of this distance and its main properties. 
A pair of stochastic processes $(U_t, \tilde{U}_t)_{t\geq 0}$ constructed in the same probability space,
 for which an almost surely finite random time  $T$ satisfying  
$U_{t+T}=\tilde{U}_{t+T}$ for any $t\geq 0$ exists, is called a coalescent coupling. The random variable 
\[
T_*=\inf\BRA{t\geq 0: U_{t+s}=\tilde U_{t+s}\, \forall s\geq 0}
\]
is then called the {\it coupling time}. It follows in this case that
\[
\NRM{\cL(U_t)-\cL(\tilde{U}_t)}_\mathrm{TV}\leq \dP (T_*>t).
\]
A helpful notion in obtaining an  effective control of the distance is 
\emph{stochastic domination} (see~\cite{lindvall} for a complete 
introduction).

\begin{defi}[Stochastic domination]
 Let $S$ and $T$ be two real random variables with
 respective cumulative distribution functions $F$ and $G$. We
 say that $S$ is stochastically smaller than $T$ and we write 
 $S\leqs T$, if $F(t)\geq G(t)$ for any $t\in\dR$. 
\end{defi}

In particular, for a couple $(U_t, \tilde{U}_t)$ as above,  Chernoff's inequality  yields
\begin{equation}\label{coupleineg}
\NRM{\cL(U_t)-\cL(\tilde{U}_t)}_\mathrm{TV}\leq \dP (T>t) \leq  
\dE\PAR{e^{\lambda T}} e^{ -\lambda t}
\end{equation}
for any non-negative random variable $T$ such that 
$T_*\leqs T$, and any $\lambda\geq 0$ in the domain 
of the Laplace transform   $\lambda \mapsto \dE\PAR{e^{\lambda T}}$ of $T$.  

We will use these ideas to obtain  the exponential convergence to equilibrium
for $(Y,W)$   in Theorem \ref{th:no-reflection},  and in Theorem 
\ref{th:conv-reflection} below  for its reflected  version  $(X,V)$ which 
we now introduce. The Markov process ${((X_t,V_t))}_{t\geq 0}$ 
is defined by its infinitesimal generator:
\begin{equation}\label{eq:giabs}
Af(x,v)=v\partial_xf(x,v)+\PAR{a(x)\ind_\BRA{v=-1}+b(x)\ind_\BRA{v=1}
+\frac{\ind_\BRA{x= 0} }{\ind_\BRA{x> 0}}}(f(x,-v)-f(x,v)),
\end{equation}
where the maps $a$ and $b$ satisfy Hypothesis~\ref{hyp:ab}. The term 
$\ind_\BRA{x= 0}( \ind_\BRA{x> 0} )^{-1}$ means that $V$ flips from $-1$ to $+1$ 
as soon as $X$ hits zero. In other words, $X$ is reflected at zero.

\begin{rem}\label{rem:ru}
 Given a path ${((Y_t,W_t))}_{t\geq 0}$ driven by
\eqref{eq:geninfsimple},  a path of
${((X_t,V_t))}_{t\geq 0}$  can be constructed taking
 \[
X_t=\ABS{Y_t}, \quad V_0=\mathrm{sgn }(Y_0)W_0
\]
and  defining the set of jump times of $V$ to be 
\[
\BRA{t>0\,:\, \Delta V_t \neq 0}=%
\BRA{t>0\,:\, \Delta W_t \neq 0}\cup \BRA{t>0\,:\,Y_t=0}.
\]
Since $W$ does not jump with positive probability when
$Y$ hits the origin,  one can also construct a path of 
${((Y_t,W_t))}_{t\geq 0}$  from an initial value
$y\in\dR$ and a path ${((X_t,V_t))}_{t\geq 0}$ driven by
\eqref{eq:giabs}: set  $\sigma_0=0$ and   ${(\sigma_i)}_{i\geq 1}$ for the successive
hitting times   of the origin and
\[
(Y_t,W_t)=(-1)^{i}\mathrm{sgn}(y) (X_t,V_t)   \quad\text{if }t\in [\sigma_{i},\sigma_{i+1}).
\]
\end{rem}
Let us state our results about the long time behavior of $(X,V)$. 

\begin{thm}\label{th:conv-reflection}
 The invariant measure of $(X,V)$ is the product
 measure on $\dR_+\times\BRA{-1,+1}$ given by
\[
\nu(dx,dv)=\frac{2}{C_F}e^{-F(x)}\,dx\otimes  \frac{1}{2}(\delta_{-1}+\delta_{+1})(dv).
\]
where $F$ and $C_F$ are given by \eqref{eq:defF}.  Moreover, denoting by 
$\nu_t^{x,v}$ the law of $(X_t,V_t)$ when $X_0=x$ and $V_0=v$, 
there exists $\lambda >0$, $K>0$ and $c>0$ such that, for any $x,\tilde x\geq 0$ 
and $v,\tilde v\in\BRA{-1,+1}$,
\begin{equation}\label{eq:up-reflection}
\NRM{\nu_t^{x,v}-\nu_t^{\tilde x,\tilde v}}_{\mathrm{TV}}\leq %
Ke^{-\lambda t} e^{c(x\vee \tilde x)}.
 \end{equation}
\end{thm}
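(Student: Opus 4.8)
The plan is to establish the two halves of Theorem~\ref{th:conv-reflection} separately: the invariant measure computation is a short verification, while the convergence estimate~\eqref{eq:up-reflection} is the substance, to be obtained by an explicit coalescent coupling of two copies of $(X,V)$.

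For the invariant measure, I would proceed exactly as in the proof of Proposition~\ref{Invariant distribution}: take $f\in\cC^1$ on $\dR_+\times\BRA{-1,+1}$ with compact support, write out $Af(x,1)+Af(x,-1)$, and note that the reflection term contributes only a boundary effect at $x=0$. Integrating $\int (\partial_x f(x,1)-\partial_x f(x,-1))e^{-F(x)}\,dx$ by parts over $[0,\infty)$ produces a boundary term at $x=0$ that is exactly cancelled by the reflection (the flip of $V$ from $-1$ to $+1$ at the origin), which is the role of the factor $2/C_F$ and the constraint that $X$ lives on $\dR_+$. This yields $\int Af\,d\nu=0$, and finiteness of the normalizing constant is already known from Proposition~\ref{Invariant distribution}.

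For~\eqref{eq:up-reflection}, the idea is to couple two reflected processes $(X_t,V_t)$ and $(\tilde X_t,\tilde V_t)$ started from $(x,v)$ and $(\tilde x,\tilde v)$, show that they coalesce after an almost surely finite coupling time $T_*$, and bound $\dE(e^{\lambda T_*})$. By the preliminary discussion this then gives~\eqref{coupleineg}. The construction of the coupling and the analysis of jump and hitting times for $(X,V)$ are precisely the content of Sections~\ref{sec:times} and~\ref{sec:coalescent-refl} announced in the paper, so I would invoke those: first bring the two velocities into agreement (when $V_t\neq\tilde V_t$ the gap $|X_t-\tilde X_t|$ changes at speed $2$, so one waits for a flip of the lagging copy or for one of them to hit $0$, where reflection forces $V=+1$ and can be used to synchronize signs), and then, once $V_t=\tilde V_t$, use a coupling of the position-dependent jump rates to make the two copies flip simultaneously with a controlled probability, after which $|X_t-\tilde X_t|$ stays frozen until the next hit of $0$ by the higher copy, which closes the gap. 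Iterating over the excursions between successive visits to the origin gives an embedded discrete-time chain (the "embedded Markov process" the introduction refers to, à la \cite{BCF}) whose number of steps before coalescence has an exponential moment; the factor $e^{c(x\vee\tilde x)}$ enters because the first visit to the origin from height $x$ takes time controlled by $x$ (the drift being only bounded below, not contractive).

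The main obstacle is exactly the non-contractivity of the vector fields: when $V_t=\tilde V_t$ the distance $|X_t-\tilde X_t|$ is constant between coupled jumps, so coalescence of positions can only be achieved at the boundary $x=0$, and one must control both the probability that the two jump rates can be coupled so the copies flip together (which requires comparing $a(X_t),b(X_t)$ with $a(\tilde X_t),b(\tilde X_t)$ and using the monotonicity in Hypothesis~\ref{hyp:ab}) and the time until the relevant hitting of the origin, uniformly enough to extract an exponential moment. This is where the hitting-time estimates of Section~\ref{sec:times} and the geometric-type control of the embedded chain of Section~\ref{sec:coalescent-refl} do the work, and assembling them into a single $\dE(e^{\lambda T_*})\leq Ke^{c(x\vee\tilde x)}$ bound is the delicate bookkeeping step.
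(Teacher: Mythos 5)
Your verification of the invariant measure is acceptable (and more self-contained than the paper, which in effect obtains $\nu$ as the image of $\mu$ under $(y,w)\mapsto(|y|,\mathrm{sgn}(y)w)$ via Remark~\ref{rem:ru}); just note that the boundary term at $x=0$ vanishes because the instantaneous reflection forces $f(0,+1)=f(0,-1)$ for $f$ in the domain of $A$. The convergence half, however, misidentifies the coalescence mechanism. The step ``after which $|X_t-\tilde X_t|$ stays frozen until the next hit of $0$ by the higher copy, which closes the gap'' does not work: if $V_t=\tilde V_t$ and the jumps are coupled so that the velocities always flip together, the gap is indeed constant, but it never closes this way --- by continuity the \emph{lower} copy reaches $0$ strictly first, is reflected there, and from that instant the velocities are opposite again. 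The positions then meet at a crossing point at positive height where the velocities necessarily disagree, so equality of positions alone is not coalescence. The paper's coupling is built the other way around: it first waits for a crossing of the positions (stochastically controlled by the hitting time of $0$ of the upper path, Proposition~\ref{prop:zero}), and at a crossing --- equal positions, opposite velocities --- it uses the coupled jump times of Lemmas~\ref{lem:comparTx+} and~\ref{lem:comparTx-} to make \emph{exactly one} of the two velocities flip at the next meeting of the positions (the event $\{\xi+\chi=1\}$, with conditional probability \eqref{eq:defp}). Your outline never produces this ``one flips, the other does not'' event, which is the only way both components can be made equal simultaneously.

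Even granting the correct mechanism, the assertion that the number of attempts before coalescence ``has an exponential moment'' glosses over the actual difficulty, which the paper states explicitly: the success probability \eqref{eq:defp} of one sticking attempt is \emph{not} bounded below uniformly in the height of the crossing point (take $a$ constant and $b(x)=a+x$), so a plain geometric bound on the number of attempts is unavailable. This is precisely why Section~\ref{sec:coalescent-refl} introduces the embedded chain $(\Phi_n,\sigma_n)$ of rectangle corners, the return times $\tau_R$ to the window $[0,R]$ where the success probability is at least $p_R>0$, and the Lyapunov estimate of Lemma~\ref{lem:taR}, before assembling the Laplace-transform bound in Proposition~\ref{prop:stick}. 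Delegating to ``Sections 2 and 3'' without engaging with either of these two points leaves the substance of the proof missing.
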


Following the lines of the proof, the constants $\lambda$, $K$ and $c$ can be 
expressed in terms of the jump rate functions $a$ and $b$.  Let us summarize some important random times involved in the proof of 
Theorem~\ref{th:conv-reflection} (all related to the reflected process): 
\begin{itemize}
 \item $T_{(x,v)}$ stands for the first jump time starting at $(x,v)$ and $\varphi_{(x,v)}$ stands for its 
 Laplace transform,
 \item $Z_{(x,v)}$ stands for the first hitting time of $(0,+1)$ starting at $(x,v)$, 
 \item $T_c$ stands for the first crossing time of the continuous components of two paths, 
 \item $T_*$ stands for the coupling time (i.e. from  $T_*$ on the two paths are equal forever).  
\end{itemize}

Roughly, we will let evolve both paths until the first crossing time $T_c$ (which is stochastically controlled 
by hitting times of 0) and then  couple the whole processes by using explicit couplings of the jump-times. Notice that it is not obvious to deduce 
Theorem~\ref{th:no-reflection} from Theorem~\ref{th:conv-reflection}.

%%%
\section{Basic properties of the reflected process}\label{sec:times}

%%%
\subsection{Distribution of the jump times}\label{sec:lawJT}

Let us denote by $T_\PAR{x,v}$ the first jump time of the stochastic process $(X,V)$ 
starting from $(X_0,V_0)=(x,v)$ with infinitesimal generator defined by \eqref{eq:giabs}. 
This random time satisfies
\[
T_\PAR{x,v}=\inf\BRA{t\geq 0: \int^{t}_0 c(X_s)\geq E},
\]
where $E$ is an exponential variable with unit mean and $c$ stands for the function $b$ 
when $v=+1$ and, when $v=-1$, 
\begin{eqnarray*}
c(x)=
\begin{cases}
a(x)& \text{ for } x>0,\\
+\infty& \text{ for } x\leq 0.
 \end{cases}
\end{eqnarray*}
 
\noindent The process $(X,V)$ being deterministic between jump times, we have
\[
T_\PAR{x,v}=\inf\BRA{t\geq 0: \int^{t}_0\!c(x+vs)\geq E} . 
\]
Consequently, if $B$ is the primitive of $b$ with $B(0)=0$, we have
\begin{equation}\label{eq:T+}
T_\PAR{x,+1}=B^{-1}\PAR{E+B(x)} -x,
\end{equation}
and if $A$ is the primitive of $a$ with $A(0)=0$, 
\begin{equation}\label{eq:T-}
T_\PAR{x,-1}=
\begin{cases}
x-A^{-1}\PAR{A(x)-E} &\text{if }E<A(x),\\
x&\text{otherwise.}
\end{cases}
\end{equation}
The functions $A^{-1}$ and $B^{-1}$ are well defined since $a$ and $b$ are positive functions.

\begin{lem}[Law of jump times]\label{lem:law-time}
Let $x\in\dR_+$. The random variable  $T_\PAR{x,+1}$ is absolutely continuous with density  given by
\[
t\mapsto b(t+x)e^{-\PAR{B(t+x)-B(x)}}\ind_{(0,\infty)}(t).
\]
The random variable $T_\PAR{x,-1}$ is a mixture of an absolutely  continuous random variable and the
constant variable $x$. Its distribution is
\begin{align*}
e^{-A(x)}\delta_{x}+a(x-t)e^{-\PAR{A(x)-A(x-t)}}\ind_{[0,x]}(t) dt,
\end{align*}
where $\delta_x$ denotes the Dirac mass at $x$ and $dt$ the Lebesgue measure on $\dR$.
\end{lem}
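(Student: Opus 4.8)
The plan is to read off both laws directly from the explicit representations \eqref{eq:T+} and \eqref{eq:T-} of the jump times in terms of the exponential variable $E\sim E(1)$, using only that $A$ and $B$ are $C^1$, strictly increasing, hence bijective from $[0,\infty)$ onto $[0,\infty)$ (this holds because $a,b$ are positive by Hypothesis~\ref{hyp:ab}). In each case I would compute the tail function $t\mapsto\dP(T_{(x,v)}>t)$ and then differentiate, keeping track of any atom.

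For the velocity $+1$ case, fix $t>0$. From \eqref{eq:T+} and the monotonicity of $B^{-1}$,
\[
\dP\PAR{T_{(x,+1)}>t}=\dP\PAR{B^{-1}(E+B(x))>t+x}=\dP\PAR{E>B(t+x)-B(x)}=e^{-(B(t+x)-B(x))}.
\]
Since $t\mapsto B(t+x)-B(x)$ is $C^1$ with derivative $b(t+x)$, differentiating $1-\dP(T_{(x,+1)}>t)$ yields the density $t\mapsto b(t+x)e^{-(B(t+x)-B(x))}\ind_{(0,\infty)}(t)$; there is no atom at $+\infty$ because $b\geq\underline b>0$ forces $B(t+x)\to\infty$, so this density integrates to $1$.

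For the velocity $-1$ case I would first isolate the atom: by \eqref{eq:T-}, the event $\BRA{T_{(x,-1)}=x}$ coincides with $\BRA{E\geq A(x)}$, so $\dP(T_{(x,-1)}=x)=e^{-A(x)}$. For $0\leq t<x$, the event $\BRA{T_{(x,-1)}>t}$ occurs either when $E\geq A(x)$ or when $E<A(x)$ and $x-A^{-1}(A(x)-E)>t$, i.e. $A^{-1}(A(x)-E)<x-t$, i.e. $E>A(x)-A(x-t)$ by monotonicity of $A$; since $A(x)-A(x-t)\leq A(x)$, these two cases merge and $\dP(T_{(x,-1)}>t)=e^{-(A(x)-A(x-t))}$. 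Differentiating and using $\frac{d}{dt}\PAR{A(x)-A(x-t)}=a(x-t)$ gives the absolutely continuous part $a(x-t)e^{-(A(x)-A(x-t))}\ind_{[0,x]}(t)\,dt$, which together with $e^{-A(x)}\delta_x$ is the announced mixture; a quick change of variables $u=x-t$ confirms $\int_0^x a(x-t)e^{-(A(x)-A(x-t))}\,dt=1-e^{-A(x)}$, so the total mass is $1$. I do not expect any genuine obstacle here: the only points deserving care are the invertibility and monotonicity of $A$ and $B$, the sign in $\frac{d}{dt}A(x-t)=-a(x-t)$, and the correct accounting of the atom at $x$, which encodes the reflection of the particle at the origin before a spontaneous velocity flip can occur.
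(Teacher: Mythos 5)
Your proof is correct and follows essentially the same route as the paper's: compute the tail function from the representations \eqref{eq:T+}--\eqref{eq:T-} (equivalently, $\dP(T_{(x,+1)}>t)=e^{-\int_0^t b(x+s)\,ds}$), differentiate, and account for the atom at $x$ via $\dP(E\geq A(x))=e^{-A(x)}$. The only quibble is that under Hypothesis~\ref{hyp:ab} the rates $a,b$ are merely measurable and monotone, so $A,B$ are Lipschitz and strictly increasing but not necessarily $C^1$; this is immaterial, since the tail functions are absolutely continuous and the stated densities are their a.e.\ derivatives.
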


\begin{proof}
We notice that $T_\PAR{x,+1}$ is almost surely finite since $\int^{\infty}_0 b(x+s)ds=+\infty$. 
Let $E$ be an exponential variable with unit mean and $t\geq 0$, then
\[
\dP(T_\PAR{x,+1}>t)=\dP\PAR{\int^{t}_0 b(x+s)ds<E}=e^{-\int^{t}_0 b(x+s)ds}.
\]
We obtain the density of $T_{(x,+1)}$ by derivation. The distribution of $T_\PAR{x,-1}$ is similarly obtained noting that $\dP(E>A(x))=e^{-A(x)}$.
%
%$\varphi$ a test function then
%\[
%\dE\SBRA{\varphi\PAR{T_\PAR{x,+1}}}
%=\int_{0}^\infty \varphi\PAR{B^{-1}\PAR{t+B(x)} -x}e^{-t}dt.
%\]
%We denote by $s=B^{-1}\PAR{t+B(x)} -x$, i.e. $t=B(s+x)-B(x)$. We have
%\[
%\dE\SBRA{\varphi\PAR{T_\PAR{x,v}}}
%=\int_{0}^\infty \varphi\PAR{s}e^{-B(s+x)+B(x)}b(s+x)ds
%\]
%since $\lim_{t\rightarrow +\infty}B^{-1}\PAR{t+B(x)}-x=+\infty$.
\end{proof}

\begin{lem}[Laplace transform of jump times]\label{lem:Laplace}
Let $x\geq 0$ be fixed. The Laplace transform of $ T_\PAR{x,+1}$ is finite 
on $(-\infty,\overline b)$. Furthermore, if $\lambda<b(x)$,
\[
\dE\SBRA{e^{\lambda T_\PAR{x,+1}}}\leq{b(x)\over b(x)-\lambda}.
\]
% for $\lambda<\overline b$, for any $z\geq 0$ such that $\lambda <b(x+z)$, we have
% \begin{align*}
%\dE\SBRA{e^{\lambda T_\PAR{x,+1}}}
%&\leq {b(x+z) \over b(x)-\lambda}\PAR{1-e^{(\lambda -b(x))z}}
%+e^{\int_0^{z}(\lambda-b(x+u))du}{b(x+z)\over b(x+z)-\lambda}.
%\end{align*}
\end{lem}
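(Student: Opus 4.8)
The plan is to read off both estimates from the explicit description of $T_{(x,+1)}$ obtained above, using only the monotonicity of $b$. Recall from \eqref{eq:T+} that $T_{(x,+1)}=B^{-1}(E+B(x))-x$ for a unit mean exponential variable $E$; equivalently, $B(x+T_{(x,+1)})-B(x)=E$. Since $b$ is non decreasing on $[0,\infty)$ and $x\geq 0$, we have $b(x+s)\geq b(x)$ for every $s\geq 0$, hence $B(x+t)-B(x)=\int_0^t b(x+s)\,ds\geq b(x)\,t$ for all $t\geq 0$. Combining this (with $t=T_{(x,+1)}$) with the identity above gives $E\geq b(x)\,T_{(x,+1)}$, that is, the pathwise bound $T_{(x,+1)}\leq E/b(x)$; in particular $T_{(x,+1)}\leqs E(b(x))$.

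From this stochastic domination I would then simply write, for $0\leq\lambda<b(x)$,
$\dE[e^{\lambda T_{(x,+1)}}]\leq\dE[e^{\lambda E/b(x)}]=b(x)/(b(x)-\lambda)$, which is the asserted inequality; and for $\lambda\leq 0$ the Laplace transform is at most $1$ since $T_{(x,+1)}>0$ almost surely, so it is finite on $(-\infty,0]$.

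To get finiteness on the whole interval $(-\infty,\bar b)$ --- note that $b(x)$ may be strictly smaller than $\bar b$, so the previous bound alone does not suffice --- I would fix $\lambda<\bar b$ and choose $M$ with $\lambda<M<\bar b$. Since $b$ is non decreasing with $\sup_{y>0}b(y)=\bar b$, there is $y_M$ such that $b(y)\geq M$ for all $y\geq y_M$; setting $s_0=\max(0,y_M-x)$ one has $b(x+s)\geq M$ for $s\geq s_0$, hence $B(x+t)-B(x)\geq M(t-s_0)$ for $t\geq s_0$. This yields the pathwise bound $T_{(x,+1)}\leq s_0+E/M$, and therefore $\dE[e^{\lambda T_{(x,+1)}}]\leq e^{\lambda s_0}\,\dE[e^{\lambda E/M}]=e^{\lambda s_0}M/(M-\lambda)<\infty$. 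Since $M<\bar b$ was arbitrary, the Laplace transform is finite on $(-\infty,\bar b)$.

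I do not expect any genuine obstacle here: the proof is essentially a one-line stochastic-domination argument. The only points requiring a little care are the elementary use of monotonicity of $b$ to lower bound the integrated rate $B(x+t)-B(x)$, and the observation that the clean estimate $\dE[e^{\lambda T_{(x,+1)}}]\leq b(x)/(b(x)-\lambda)$ is the relevant statement for $\lambda\geq 0$, finiteness for negative $\lambda$ being automatic.
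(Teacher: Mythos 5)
Your proof is correct and follows essentially the same route as the paper: the bound $\dE[e^{\lambda T_{(x,+1)}}]\le b(x)/(b(x)-\lambda)$ comes from the domination $T_{(x,+1)}\le E/b(x)$ (a consequence of $b$ being non decreasing), and finiteness on all of $(-\infty,\bar b)$ comes from shifting to a level where the rate exceeds $\lambda$ --- the paper does this by splitting the expectation at a deterministic time $z$ with $\lambda<b(x+z)$ and using the identity $\cL(T_{(x,+1)}\mid T_{(x,+1)}>z)=\cL(z+T_{(x+z,+1)})$, while you use the equivalent pathwise bound $T_{(x,+1)}\le s_0+E/M$. If anything you are more careful than the paper, which asserts the explicit bound for every $\lambda<b(x)$ via stochastic domination even though that argument (and indeed the inequality itself) can fail for $\lambda<0$; your restriction of the quantitative bound to $\lambda\ge0$, with finiteness for $\lambda\le0$ being automatic, is the correct reading and loses nothing in the applications.
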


\begin{proof}
Let $x\geq 0$ be fixed. Since $b$ is non decreasing, we first we notice that 
$T_\PAR{x,+1}\leq_\mathrm{sto.}E/b(x)$, where $E$ is an exponential 
variable with mean $1$. Then its Laplace transform is at least defined on 
$(-\infty,b(x))$ and is bounded from above by that
of $E/b(x)$ on this interval.

Let us now fix $\lambda<\overline{b}$. Thus there exists $z\geq 0$ such that 
$\lambda<b(x+z)$. The distribution of $ T_\PAR{x,+1}$ conditional on 
$T_\PAR{x,+1}>z$ is equal to the distribution of $z+ T_\PAR{x+z,+1}$ and 
the Laplace transform of $ T_\PAR{x,+1}$ can be split as follows
\begin{align*}
\dE\SBRA{e^{\lambda T_\PAR{x,+1}}}
&=\dE\SBRA{e^{\lambda T_\PAR{x,+1}}\ind_{T_\PAR{x,+1}\leq z}}
+\dE\SBRA{e^{\lambda T_\PAR{x,+1}}\ind_{T_\PAR{x,+1}>z}}\\
& 
=  \int_0^{z} b(x+t)e^{\int_{0}^{t}(\lambda-b(x+u))du}dt
+e^{\int_0^{z}(\lambda-b(x+u))du}\dE\SBRA{e^{\lambda T_\PAR{x+z,+1}}}.
\end{align*}
which is finite from the definition of $z$. 
%Indeed, we have
%\begin{align*}
%\dE\SBRA{e^{\lambda T_\PAR{x,+1}}}
%&\leq {b(x+z) \over \lambda-b(x)}\PAR{e^{(\lambda-b(x) )z}-1}
%+e^{\int_0^{z}(\lambda-b(x+u))du}{b(x+z)\over b(x+z)-\lambda}.
%\end{align*}
%We recover the estimation via $T_\PAR{x,+1}\leq_{sto.}E(b(x))$ 
%when $\lambda <b(x)$ since we can choose $z=0$.
\end{proof}

%\begin{proof}
%Since $b$ is non-decreasing, $T_\PAR{x,+1}\leq_\mathrm{sto.}E(b(x))$ where $E(b(x))$ is an 
%exponential variable with mean $1/b(x)$ and the first statement follows. 
%Moreover, for any $x,t\geq 0$, 
%\[
%B^{-1}\PAR{t+B(x)}-x=\int_0^t\! \partial_u [B^{-1}(u+B(x))]\,du=\int_0^t\!\frac{du}{b(u+B(x))}.
%\]
%If $b(x)$ goes to $+\infty$ as $x\to\infty$, then, for any $\varepsilon>0$ and $x\geq 0$, 
%there exists $M>0$ depending on $\varepsilon$ and $x$ such 
%that for all $t\geq 0$ 
%\[
%B^{-1}\PAR{t+B(x)}-x\leq \varepsilon t+M. 
%\]
%As consequence, for any $\lambda\geq 0$, 
%\[
%\dE\SBRA{e^{\lambda T_\PAR{x,+1}}}
%=\int_0^{+\infty}e^{\lambda (B^{-1}\PAR{t+B(x)}-x)}e^{-t}dt
%\]
%is finite.
%%We notice that $\lim_{t\rightarrow +\infty}{B(t)}/{ t}=+\infty$ 
%%and consequently there exists a constant $c=c(x,\lambda)\in(0,\infty)$ 
%%such that for all $t\geq c$
%%\[
%%t+B(x)\leq 2t \leq B\PAR{t\over 2\lambda}. 
%%\]
%%Then,
%%\begin{align*}
%%\dE\SBRA{e^{\lambda T_\PAR{x,+1}}}
%%&=e^{-\lambda x}\int_0^{+\infty}e^{\lambda B^{-1}\PAR{t+B(x)}}e^{-t}dt\\
%%&\leq e^{-\lambda x}\int_0^{c}e^{\lambda B^{-1}\PAR{t+B(x)}}e^{-t}dt
%%+e^{-\lambda x}\int_c^{+\infty}e^{-t/2}dt\\
%%&\leq \PAR{e^{\lambda B^{-1}\PAR{c+B(x)}}\PAR{1-e^{-c}}
%%+2e^{-c/2}}e^{-\lambda x}\leq 2\cosh (c/2)e^{-\lambda x}.
%%\end{align*}
%\end{proof}

\begin{lem}[Stochastic order for jump times]\label{lem:EvolTx}
For $0\leq \tilde x<x$, we have
\[
T_{(x,+1)}\leqs T_{(\tilde x ,+1)} 
\quad\text{and}\quad 
T_{(x,-1)}\geqs T_{(\tilde x ,-1)}.
\]
\end{lem}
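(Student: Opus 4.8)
The plan is to read off the survival functions of the two jump times from Lemma~\ref{lem:law-time} (equivalently from~\eqref{eq:T+}--\eqref{eq:T-}) and then compare them using only the monotonicity of $b$ and $a$.

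\emph{The velocity $+1$ case.} The computation in the proof of Lemma~\ref{lem:law-time} gives, for every $t\ge 0$,
\[
\dP\PAR{T_{(x,+1)}>t}=\exp\PAR{-\int_0^t b(x+s)\,ds}.
\]
For $0\le\tilde x<x$ and $s\ge 0$ we have $x+s>\tilde x+s\ge 0$, hence $b(x+s)\ge b(\tilde x+s)$ since $b$ is non decreasing on $[0,\infty)$; integrating over $[0,t]$ and exponentiating yields $\dP(T_{(x,+1)}>t)\le\dP(T_{(\tilde x,+1)}>t)$ for all $t$, which is by definition $T_{(x,+1)}\leqs T_{(\tilde x,+1)}$.

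\emph{The velocity $-1$ case.} From~\eqref{eq:T-} the variable $T_{(x,-1)}$ is almost surely at most $x$, and for $0\le t<x$,
\[
\dP\PAR{T_{(x,-1)}>t}=\dP\PAR{E>A(x)-A(x-t)}=\exp\PAR{-\int_0^t a(x-s)\,ds},
\]
while $\dP(T_{(x,-1)}>t)=0$ for $t\ge x$; this also follows by integrating the density in Lemma~\ref{lem:law-time}. To establish $T_{(x,-1)}\geqs T_{(\tilde x,-1)}$ we must check $\dP(T_{(x,-1)}>t)\ge\dP(T_{(\tilde x,-1)}>t)$ for all $t$, and we split according to the value of $t$. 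If $0\le t<\tilde x$, then for $s\in[0,t]$ the numbers $\tilde x-s$ and $x-s$ lie in $[0,\infty)$ with $\tilde x-s\le x-s$, so $a(x-s)\le a(\tilde x-s)$ because $a$ is non increasing; integrating and exponentiating reverses the inequality and gives $\dP(T_{(x,-1)}>t)\ge\dP(T_{(\tilde x,-1)}>t)$. If $t\ge\tilde x$, the right-hand side is $0$ and the inequality is trivial. This exhausts all $t$ and proves the claim.

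\emph{On the difficulty.} There is no real obstacle: the whole statement reduces to moving the monotonicity of $a$ and $b$ under the integral sign. The only point requiring a little care is the $-1$ case, where $T_{(x,-1)}$ carries an atom at $x$ and its law is supported on $[0,x]$, so the exponential formula for its survival function is only valid on $[0,x)$; dealing separately with $t<\tilde x$, $\tilde x\le t<x$ and $t\ge x$ absorbs this subtlety. Alternatively, one may argue pathwise by realizing $T_{(x,+1)}$ and $T_{(\tilde x,+1)}$ (resp.\ $T_{(x,-1)}$ and $T_{(\tilde x,-1)}$) from a single exponential variable $E$ through~\eqref{eq:T+} (resp.\ \eqref{eq:T-}): using $B^{-1}(E+B(x))\ge x$ one checks that $x\mapsto B^{-1}(E+B(x))-x$ is non increasing, and using $A^{-1}(A(x)-E)\le x$ together with the monotonicity of $a$ one checks that $x\mapsto x-A^{-1}((A(x)-E)\vee 0)$ is non decreasing; this gives both stochastic orderings at once, and in fact shows they hold almost surely under this coupling.
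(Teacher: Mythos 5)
Your proof is correct. Your primary argument verifies the definition of stochastic domination directly, by writing out the survival functions $\dP(T_{(x,+1)}>t)=\exp\bigl(-\int_0^t b(x+s)\,ds\bigr)$ and $\dP(T_{(x,-1)}>t)=\exp\bigl(-\int_0^t a(x-s)\,ds\bigr)$ (the latter on $[0,x)$) and pushing the monotonicity of $b$ and $a$ through the integrals; your case analysis for the $-1$ velocity, where the law has an atom at $x$ and support $[0,x]$, is the one genuinely delicate point and you handle it correctly. The paper instead argues pathwise: it realizes $T_{(x,+1)}$ and $T_{(\tilde x,+1)}$ from a single exponential variable $E$ via \eqref{eq:T+} and compares the two upper limits of integration in $\int_0^{T}b(\cdot+s)\,ds=E$, which yields an almost sure ordering under that coupling and hence the stochastic ordering. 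This is exactly the alternative you sketch in your closing remark, so the two routes coincide in substance; the coupling version is marginally stronger (it produces a monotone coupling, which is reused implicitly elsewhere in the paper), while your distributional version is more self-contained since it needs no construction beyond Lemma~\ref{lem:law-time}.
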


\begin{proof}
We first consider $v=+1$. 
Let $E$ be an exponential variable with unit mean and 
\[
T_\PAR{x,+1}:=B^{-1}\PAR{E+B(x)} -x
\quad\text{and}\quad
T_\PAR{\tilde x,-1}:=B^{-1}\PAR{E+B(\tilde x)} -\tilde x
\]
We have 
\[
\int_0^{ T_\PAR{x,+1}}b(x+s)ds=E= \int_0^{ T_\PAR{\tilde x,+1}}b(\tilde x+s)ds
\]
with $b$ a non-decreasing function and $x>\tilde x$, then clearly
$T_\PAR{x,+1}\leq T_\PAR{\tilde x,+1}$ (inequalities are strict when the 
jump rates are strictly monotone functions). The proof is similar for $T_{(x,-1)}$.
\end{proof}

We now give  helpful results in order  to construct a coalescent coupling of two processes 
starting from different initial data, which is in fact a generalization of a well known 
result on exponential random variables. Indeed, when $a<b$ are positive constants, 
 the following equalities  in distribution hold:
\begin{align*}
E(b)&\overset{\cL}{=}E(a)\wedge E(b-a),\\
E(a)&\overset{\cL}{=}E(b)+\varepsilon\, E'(a),
\end{align*}
where  random variables on the left hand side are independent and 
$\varepsilon$ is Bernoulli  with parameter $(1-a/b )$.

\begin{lem}[Decomposition of jump times, part I]\label{lem:comparTx+}
For $x\geq \tilde x \geq0$,  the identity in law
%There exist three random variables 
%$T_\PAR{x,+1}$, $T_\PAR{\tilde x,+1}$ and $Z_+\in [0,+\infty]$ 
%independent of $T_\PAR{\tilde x,+1}$ such that 
\[
T_\PAR{x,+1}\overset{\cL}{=}T_\PAR{\tilde x,+1}\wedge Z_+
\]
holds, with $Z_+$ a random variable with values  in $(0,\infty]$  independent  
of $T_\PAR{\tilde x,+1}$, such that
\begin{equation} \label{eq:survival}
\dP(Z_+>t)=\exp\PAR{-B(x+t)+B(x)+B(\tilde x+t)-B(\tilde x)}    \mbox{ for all }t\in[0,\tilde x). 
\end{equation}
Moreover,  there exists a coupling $(T_\PAR{x,+1}, T_\PAR{\tilde x,+1})$  such that, almost surely,
\[
T_\PAR{\tilde x,+1}\geq  T_\PAR{x,+1}
\]
and, conditionally on $\{T_{(x,+1)}=t\}$, 
\[
T_\PAR{\tilde x,+1}\overset{\cL}{=}  t  +\xi_{t}\hat{T}_\PAR{\tilde x+t,+1}
\]
with $\hat{T}_\PAR{\tilde x+t,+1} \overset{\cL}{=}   T_\PAR{\tilde x  +t,+1}$ 
and $\xi_t$  a Bernoulli r.v. independent of $\hat{T}_\PAR{x+t,+1}$ 
of parameter 
\[
\beta_t:=\frac{b(x+t)-b(\tilde{x}+t)}{b(x+t)}\in [0,1).
\]
\end{lem}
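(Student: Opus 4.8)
The plan is to realise both jump-times as functionals of a single inhomogeneous Poisson process and to use the superposition and thinning properties of such processes. Recall from \eqref{eq:T+} and Lemma~\ref{lem:law-time} that $\dP(T_{(x,+1)}>t)=e^{-(B(x+t)-B(x))}$, with $B$ the primitive of $b$ vanishing at $0$, and that $\int_0^\infty b(x+s)\,ds=+\infty$; the same holds with $x$ replaced by $\tilde x$.

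For the first identity in law I would take two independent inhomogeneous Poisson processes $N_1,N_2$ on $[0,\infty)$ with respective intensities $b(\tilde x+s)\,ds$ and $(b(x+s)-b(\tilde x+s))\,ds$ — the latter intensity being nonnegative because $b$ is non-decreasing and $x\ge\tilde x\ge0$ — and let $T_{(\tilde x,+1)}$ be the first point of $N_1$ and $Z_+\in(0,\infty]$ the first point of $N_2$, with $Z_+=+\infty$ when $N_2$ has no point (an event of positive probability exactly when $\int_0^\infty(b(x+s)-b(\tilde x+s))\,ds<\infty$). Then $Z_+$ is independent of $T_{(\tilde x,+1)}$ by construction; the superposition $N_1+N_2$ has intensity $b(x+s)\,ds$, so its first point $T_{(\tilde x,+1)}\wedge Z_+$ has the law of $T_{(x,+1)}$; and $\dP(Z_+>t)=\exp(-\int_0^t(b(x+s)-b(\tilde x+s))\,ds)=\exp(-B(x+t)+B(x)+B(\tilde x+t)-B(\tilde x))$, which is \eqref{eq:survival}.

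For the coupling I would instead build both times from one Poisson process $N$ of intensity $b(x+s)\,ds$, with ordered points $0<\tau_1<\tau_2<\cdots$ (infinitely many a.s.), together with an independent i.i.d.\ uniform sequence $(U_i)_{i\ge1}$: set $T_{(x,+1)}:=\tau_1$ and $T_{(\tilde x,+1)}:=\tau_K$ with $K:=\min\{i\ge1:\ U_i\le b(\tilde x+\tau_i)/b(x+\tau_i)\}$. The thinning theorem identifies the retained points $\{\tau_i:\ U_i\le b(\tilde x+\tau_i)/b(x+\tau_i)\}$ as a Poisson process of intensity $b(\tilde x+s)\,ds$, hence with infinitely many points a.s., so $K<\infty$; this gives the correct marginal laws and $T_{(\tilde x,+1)}=\tau_K\ge\tau_1=T_{(x,+1)}$ since $K\ge1$. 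Conditioning on $\{T_{(x,+1)}=t\}=\{\tau_1=t\}$ and setting $\xi_t:=\ind\{U_1>b(\tilde x+t)/b(x+t)\}$, a Bernoulli variable of parameter $\beta_t=\frac{b(x+t)-b(\tilde x+t)}{b(x+t)}\in[0,1)$ (strict because $b(\tilde x+t)>0$ by Hypothesis~\ref{hyp:ab}): on $\{\xi_t=0\}$ one has $K=1$ and $T_{(\tilde x,+1)}=t$, while on $\{\xi_t=1\}$ the point $\tau_1$ is rejected and, by the restart property of $N$ at $\tau_1$, conditionally on $\tau_1=t$ the shifted configuration $\{\tau_i-t:\ i\ge2\}$ is a Poisson process of intensity $b(x+t+u)\,du$ independent of $(U_i)_{i\ge1}$; reading the same construction from time $t$ with $(x,\tilde x)$ replaced by $(x+t,\tilde x+t)$ then shows that the first subsequent retained time, $T_{(\tilde x,+1)}-t$, has the law of $T_{(\tilde x+t,+1)}$ and is independent of $\xi_t$ (which depends only on $U_1$). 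Combining the two cases yields, conditionally on $\{T_{(x,+1)}=t\}$, the asserted identity $T_{(\tilde x,+1)}\overset{\cL}{=}t+\xi_t\hat T_{(\tilde x+t,+1)}$.

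The only delicate step is the conditioning on the null event $\{\tau_1=t\}$ and the precise meaning of ``$N$ restarts afresh from its first point''. I would settle this once and for all by writing out the joint law of $(\tau_1,\{\tau_i-\tau_1:i\ge2\})$: $\tau_1$ has density $b(x+s)e^{-(B(x+s)-B(x))}$ and, given $\tau_1=t$, the family $\{\tau_i-t:i\ge2\}$ is an independent Poisson process of intensity $b(x+t+u)\,du$. After that, all of the above becomes a routine manipulation of regular conditional distributions, the thinning and superposition theorems and the monotonicity giving $\beta_t\in[0,1)$ being standard.
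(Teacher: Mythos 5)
Your proof is correct, but it follows a genuinely different route from the paper's. The paper works directly with the representation $T_\PAR{x,+1}=B^{-1}(E+B(x))-x$ for a single unit exponential $E$: the first identity in law comes from the memoryless property applied to the inequality $B(x+t)-B(x)\geq B(\tilde x+t)-B(\tilde x)$, and the coupling is taken to be $(T_\PAR{\tilde x,+1}\wedge Z_+,\,T_\PAR{\tilde x,+1})$ with $Z_+$ independent; the conditional decomposition is then verified by computing $\dP\PAR{T_\PAR{\tilde x,+1}>T_\PAR{x,+1}+r,\ T_\PAR{x,+1}>s}$ in closed form from the density of $T_\PAR{\tilde x,+1}$ and the survival function \eqref{eq:survival}, and differentiating in $s$. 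You instead realise both jump times inside one inhomogeneous Poisson process of intensity $b(x+s)\,ds$ and invoke superposition for the first identity and thinning (with the restart of the process at its first point) for the coupling. Both arguments are sound. Your construction is more structural: the inequality $\tau_K\geq\tau_1$, the independence of $\xi_t$ from the residual time, and the identification of $T_\PAR{\tilde x,+1}-t$ with $T_\PAR{\tilde x+t,+1}$ all become transparent, and the construction would extend with no extra work to a coupling of the full sequences of jump times. The price is that you must justify the disintegration of an inhomogeneous Poisson process at its first atom (the conditioning on the null event $\{\tau_1=t\}$, which you correctly flag as the delicate point and whose resolution you state accurately), whereas the paper's computation is entirely elementary and explicit, using only the density formulas of Lemma~\ref{lem:law-time}. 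One small remark: your survival formula for $Z_+$ holds for all $t\geq 0$, which is consistent with (indeed slightly stronger than) the range $t\in[0,\tilde x)$ displayed in \eqref{eq:survival}, a restriction that is only genuinely needed in the companion Lemma~\ref{lem:comparTx-}.
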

We observe that if $b(x)$ goes to $+\infty$ as $x\to\infty$, we have 
$Z_+<+\infty$ a.s.,  whereas $Z_+=+\infty$ a.s. if $b(x)=b$ is constant.

\begin{proof}
Since $b$ is a non-decreasing function, we have for any $t\geq 0$,
\[
B(t+x)-B(x)\geq B(t+\tilde x)-B(\tilde x).
\]
Using the representation \eqref{eq:T+} and the memoryless property of the exponential 
distribution, we thus have for all $t\geq 0$ that
\begin{align*}
\dP\PAR{T_\PAR{x,+1}>t}&=\dP\PAR{E>B(x+t)-B(x)}\\
&=\dP\PAR{T_\PAR{\tilde x,+1}>t}\dP\PAR{E>B(x+t)-B(x)-B(\tilde x+t)+B(\tilde x)}.
\end{align*}
The first statement follows. 
We next check that 
$(T_\PAR{x,+1}, T_\PAR{\tilde x,+1})
:=(T_\PAR{\tilde x,+1}\wedge Z_+, T_\PAR{\tilde x,+1})$,
with $(T_\PAR{\tilde x,+1}, Z_+)$ as before, is the required coupling. Since 
\begin{equation}\label{TZTT}
\{ T_\PAR{\tilde x,+1}>Z_+\}= \{T_\PAR{\tilde x,+1}> T_\PAR{ x,+1} \},
\end{equation}
we deduce that  
$T_\PAR{\tilde x,+1} =  T_\PAR{ x,+1}  
+  \left(T_\PAR{\tilde x,+1}-  T_\PAR{ x,+1}\right) 
\ind_{\{T_\PAR{\tilde x,+1}> T_\PAR{ x,+1} \}}.$ 
Thus, we just need to check that, conditionally on $\{T_{x,+1}=t\}$, 
\[
(\ind_{\{T_\PAR{\tilde x,+1}> T_\PAR{ x,+1} \}} ,   T_\PAR{\tilde x,+1}-  T_\PAR{ x,+1}) 
\overset{\cL}{=}  (\xi_t, \hat{T}_\PAR{\tilde x+t,+1} ). 
\]
Using \eqref{TZTT}, and the expression for the density of $T_\PAR{\tilde x,+1}$ 
together with \eqref{eq:survival}, we get 
  \begin{equation*}
\begin{split}
\dP \left( T_\PAR{\tilde x,+1}> T_\PAR{ x,+1} +r,  T_\PAR{ x,+1} >s \right) 
=& \, \dP \left( T_\PAR{\tilde x,+1}>  Z_+ + r, Z_+>s\right)	\\
= \, 
\int_s^{\infty}  \bigg[ \frac{b(x+t)-b(\tilde x +t)}{b(x+t)} 
&e^{- ( B(\tilde x+ r+t )- B(\tilde x +t ))}  \bigg]  e^{-(B(x+t)-B(x))}  b(x+t) dt \\
\end{split}
\end{equation*}
for all $s,r\geq 0$.  Alternatively, 
\begin{multline*}
\dP \left( T_\PAR{\tilde x,+1}> T_\PAR{ x,+1} +r,  T_\PAR{ x,+1} >s \right) = \\ 
\int_s^{\infty}  \dP \left(  T_\PAR{\tilde x,+1}- T_\PAR{ x,+1} >r,  
T_\PAR{\tilde x,+1}> T_\PAR{ x,+1} \vert   T_\PAR{ x,+1} =t\right) e^{-(B(x+t)-B(x))}  b(x+t) dt .
\end{multline*}
Taking derivative with respect to $s$ in the two above integrals, one concludes 
by  comparing the two different obtained expressions. 
\end{proof}

The function $a$ being non-increasing, we obtain an analogous result 
for $T_\PAR{x,-1}$:
\begin{lem}[Decomposition of jump times, part II]\label{lem:comparTx-} 
For $x> \tilde x >0$, the identity in law
\[
T_\PAR{\tilde x,-1}\overset{\cL}{=}T_\PAR{x,-1}\wedge Z_-
\]
holds, with $Z_-$ a random variable with values  in $(0,\tilde x]$  independent 
of $T_\PAR{x,+1}$, such that
\begin{equation}
\dP(Z_->t)=\exp\PAR{-A(\tilde x)+A(\tilde x-t)+A(x)-A(x-t)} \mbox{ for all }t\in[0,\tilde x). 
\end{equation}
Moreover,  there exists a coupling $(T_\PAR{x,-1}, T_\PAR{\tilde x,-1})$  such that, almost surely,
\[
T_\PAR{ x,-1}\geq  T_\PAR{\tilde x,-1}
\]
and, conditionally on $\{T_{(\tilde x,-1)}=t\}$, 
\[
T_\PAR{x,-1}\overset{\cL}{=}  t  +\chi_t\hat{T}_\PAR{x-t,-1}
\]
with $\hat{T}_\PAR{ x-t,-1} \overset{\cL}{=}   T_\PAR{ x  -t,+1} $ 
and   $\chi_t$  a Bernoulli r.v.  independent of $\hat{T}_\PAR{x-t,+1}   $  of parameter 
\[
\alpha_t:=\frac{a(\tilde x-t)-a(x-t)}{a(\tilde x-t)}\in [0,1) 
\quad \text{ if }t<\tilde x \quad \text{ and } \quad \alpha_{\tilde x}:=1.
\]
\end{lem}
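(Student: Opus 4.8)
The plan is to follow, almost verbatim, the proof of Lemma~\ref{lem:comparTx+}, exchanging the roles of $b,B,+1$ for $a,A,-1$ and using that $a$ is \emph{non-increasing} instead of $b$ being non-decreasing. The one genuinely new feature is that the downward motion has finite duration: a copy started at $\tilde x$ reaches the origin (where it is reflected) after a time at most $\tilde x$, so that $Z_-$ must be supported on the bounded set $(0,\tilde x]$ and will carry an atom at $\tilde x$. This atom is exactly what forces the convention $\alpha_{\tilde x}:=1$, and taking care of it is where the bookkeeping departs from part~I.

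\emph{Step 1: survival functions and the $\wedge$-decomposition.} From \eqref{eq:T-} (equivalently Lemma~\ref{lem:law-time}), for any $z\ge 0$ and $t\in[0,z]$ one has $\dP\PAR{T_\PAR{z,-1}>t}=\dP\PAR{E>A(z)-A(z-t)}=e^{-(A(z)-A(z-t))}$, and $T_\PAR{z,-1}\le z$ with $\dP\PAR{T_\PAR{z,-1}=z}=e^{-A(z)}$. Since $a$ is non-increasing and $0<\tilde x<x$, the interval $[\tilde x-t,\tilde x]$ sits to the left of $[x-t,x]$ for every $t\in[0,\tilde x)$, both being of length $t$, whence $A(\tilde x)-A(\tilde x-t)\ge A(x)-A(x-t)$. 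Consequently $g(t):=\exp\PAR{-A(\tilde x)+A(\tilde x-t)+A(x)-A(x-t)}$ satisfies $g(0)=1$, is non-increasing on $[0,\tilde x)$ (its logarithmic derivative is $a(x-t)-a(\tilde x-t)\le 0$), and $g(\tilde x^-)=e^{-A(\tilde x)+A(x)-A(x-\tilde x)}\in(0,1]$. Declaring $\dP\PAR{Z_->t}=g(t)$ on $[0,\tilde x)$ and placing the mass $g(\tilde x^-)$ at the atom $\{\tilde x\}$ defines a probability law on $(0,\tilde x]$, and one checks directly that $\dP\PAR{T_\PAR{x,-1}>t}\dP\PAR{Z_->t}=e^{-(A(\tilde x)-A(\tilde x-t))}=\dP\PAR{T_\PAR{\tilde x,-1}>t}$ for $t\in[0,\tilde x)$, that both sides vanish for $t\ge\tilde x$ (since $Z_-\le\tilde x$ and $T_\PAR{\tilde x,-1}\le\tilde x$), and that the masses at $\tilde x$ agree because $e^{-(A(x)-A(x-\tilde x))}\cdot g(\tilde x^-)=e^{-A(\tilde x)}$. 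Taking $Z_-$ independent of $T_\PAR{x,-1}$ then gives $T_\PAR{\tilde x,-1}\overset{\cL}{=}T_\PAR{x,-1}\wedge Z_-$. Finally, the memoryless property of the exponential driving \eqref{eq:T-} shows that, conditionally on $\{T_\PAR{x,-1}>t\}$, the excess $T_\PAR{x,-1}-t$ has the law of $T_\PAR{x-t,-1}$, the reflected path being at time $t$ at position $x-t$ with velocity $-1$ with no jump having occurred.

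\emph{Step 2: the coupling and the Bernoulli parameter.} I would set $\PAR{T_\PAR{x,-1},T_\PAR{\tilde x,-1}}:=\PAR{T_\PAR{x,-1},\,T_\PAR{x,-1}\wedge Z_-}$ with $T_\PAR{x,-1}$ and $Z_-$ independent as in Step~1; then $T_\PAR{x,-1}\ge T_\PAR{\tilde x,-1}$ a.s.\ and $\{T_\PAR{\tilde x,-1}<T_\PAR{x,-1}\}=\{Z_-<T_\PAR{x,-1}\}$, so $\chi_t$ is the indicator of this event. To identify $\alpha_t$ I would split the density of $T_\PAR{\tilde x,-1}$: for $t\in(0,\tilde x)$, writing the law of a minimum of independent variables, its density $a(\tilde x-t)e^{-(A(\tilde x)-A(\tilde x-t))}$ decomposes as the contribution $a(x-t)e^{-(A(\tilde x)-A(\tilde x-t))}$ on $\{T_\PAR{x,-1}=t\le Z_-\}$ (both copies jump at $t$, $\chi_t=0$) plus the contribution $\PAR{a(\tilde x-t)-a(x-t)}e^{-(A(\tilde x)-A(\tilde x-t))}$ on $\{Z_-=t<T_\PAR{x,-1}\}$ (only the $\tilde x$-copy jumps, $\chi_t=1$), whence $\dP\PAR{\chi_t=1\mid T_\PAR{\tilde x,-1}=t}=\alpha_t=\frac{a(\tilde x-t)-a(x-t)}{a(\tilde x-t)}$; on that last event $T_\PAR{x,-1}-t\overset{\cL}{=}T_\PAR{x-t,-1}$ by Step~1, and since $Z_-$ and $T_\PAR{x,-1}$ are independent this conditional law is unaffected by $\chi_t$. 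This is exactly $T_\PAR{x,-1}\overset{\cL}{=}t+\chi_t\hat T_\PAR{x-t,-1}$ with $\chi_t$ Bernoulli$(\alpha_t)$ independent of $\hat T_\PAR{x-t,-1}\overset{\cL}{=}T_\PAR{x-t,-1}$. (Equivalently, one may compute $\dP\PAR{T_\PAR{\tilde x,-1}-T_\PAR{x,-1}>r,\ T_\PAR{\tilde x,-1}>s}=\dP\PAR{T_\PAR{x,-1}>Z_-+r,\ Z_->s}$ in two ways, conditioning once on $T_\PAR{x,-1}$ and once on $Z_-$, and compare after differentiating in $s$, exactly as in Lemma~\ref{lem:comparTx+}.) On the atom $\{T_\PAR{\tilde x,-1}=\tilde x\}$ one necessarily has $Z_-=\tilde x$ and $T_\PAR{x,-1}>\tilde x$ a.s., so $\chi_{\tilde x}=1$ and $T_\PAR{x,-1}-\tilde x\overset{\cL}{=}T_\PAR{x-\tilde x,-1}$, in agreement with $\alpha_{\tilde x}:=1$.

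The main obstacle is precisely this endpoint $t=\tilde x$: one must verify that $Z_-$ is an honest probability measure on $(0,\tilde x]$ — which pins down both the atom at $\tilde x$ and the convention $\alpha_{\tilde x}=1$ — and that the atoms of $T_\PAR{x,-1}\wedge Z_-$ and of $T_\PAR{\tilde x,-1}$ at $\tilde x$ coincide; away from this boundary the argument is a routine transcription of the proof of Lemma~\ref{lem:comparTx+}.
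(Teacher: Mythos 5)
Your proof is correct and follows exactly the route the paper intends: the paper proves part~II only by declaring it ``analogous'' to Lemma~\ref{lem:comparTx+}, and your argument is precisely that transcription, with the product of survival functions giving the law of $Z_-$ and the density-splitting identifying $\alpha_t$. The extra care you take with the atom at $\tilde x$ (checking that the masses $e^{-(A(x)-A(x-\tilde x))}\cdot g(\tilde x^-)=e^{-A(\tilde x)}$ match, forcing $\chi_{\tilde x}=1$) is exactly the point the paper leaves implicit, and you handle it correctly.
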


 \begin{ex}[Explicit laws for jump times]\label{ex:loiT}\ 
In the case $b(x)=\underline b+x$ with $\underline b>0$ (as in the TCP model studied in \cite{robertetal,CMP}), 
 $T_\PAR{x,+1}$ has the density
\begin{equation*}
f_{\PAR{x,+1}}(t)=(\underline b+x+t)e^{{-\frac{(t+x+\underline b)^2-(x+ \underline b)^2}{2}}}\ind_\BRA{t>0}
\end{equation*}
and an everywhere finite Laplace transform given by  $\dE[e^{\lambda T_\PAR{x,+1}}]=1+\lambda \eta(x+\underline b-\lambda)$,
with $\eta(u)=e^{{u^2\over 2}}\sqrt{2\pi}\PAR{1-\Phi(u)}$ and $\Phi$ the 
cumulative distribution function of a standard Gaussian variable. We also notice in this case  that for
$0\leq\tilde x\leq x$,
\[
T_\PAR{x,+1}\overset{\cL}{=}T_\PAR{\tilde x,+1}\wedge E(x-\tilde x), 
\]
for $E(x-\tilde x)$  an exponential variable of mean $1/(x-\tilde x)$ 
independent of $T_\PAR{\tilde x,+1}$, and 
\[
\dP\PAR{E(x-\tilde x)>T_\PAR{\tilde x,+1}}=1-(x-\tilde x)\eta(x+ \underline b).
\]
\end{ex}

%%%%%%%%%
\subsection{Hitting time of the origin}\label{subsec:zero}

Let $(x,v)\in\dR_+\times\BRA{-1,+1}$. We notice that
\[
Z_\PAR{x,+1}\overset{\cL}{=}Z_\PAR{x,-1}+S_{x},
\]
where $Z_\PAR{x,v}$ is the first 
hitting time of $(0,+1)$ of a path starting from $(x,v)$  and $S_x$ is an excursion above $x$ independent of $Z_\PAR{x,-1}$. Consequently 
$Z_\PAR{x,-1}$ is stochastically smaller than $Z_\PAR{x,+1}$.

The  Laplace transform of the
hitting time of zero starting from $(x,v)$ was  explicitly computed  in \cite{FGM1} in the  case where $a$ and $b$ are both constant. Let us 
recall this result in the following proposition.

\begin{prop}[Hitting time of 0 for constant jump rates \cite{FGM1}]
Let us define $\lambda_c=\frac{1}{2}(\sqrt{b}-\sqrt{a})^2$ and, for any $\lambda\leq \lambda_c$,
\[
c(\lambda)=\frac{b-a-\sqrt{(a+b-2\lambda)^2-4ab}}{2} 
\quad\text{and}\quad
\psi(\lambda)=\frac{a+b-2\lambda-\sqrt{(a+b-2\lambda)^2-4ab}}{2a}.
\]
Then, for any $\lambda \in (-\infty,\lambda_c]$,
\[
 \dE\PAR{e^{\lambda Z_{(x,-1)}}}=
e^{xc(\lambda)}
\quad\text{and}\quad
 \dE\PAR{e^{\lambda Z_{(x,+1)}}}=
\psi(\lambda)e^{xc(\lambda)}.
\]
Moreover, these Laplace transforms are infinite on $(\lambda_c,\infty)$.
\end{prop}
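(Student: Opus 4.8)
The plan is to set up a renewal-type system of equations for the two Laplace transforms $\varphi_-(\lambda,x):=\dE(e^{\lambda Z_{(x,-1)}})$ and $\varphi_+(\lambda,x):=\dE(e^{\lambda Z_{(x,+1)}})$ by conditioning on the first jump time, and then solve it. First I would observe that starting from $(x,-1)$ with constant rates, the first velocity flip occurs after an $E(a)$ time (or the particle reaches $0$ first, after time $x$). Conditioning on this first jump time $T_{(x,-1)}$, whose law is given by Lemma~\ref{lem:law-time} with $a$ constant, i.e.\ $e^{-ax}\delta_x + a e^{-at}\ind_{[0,x]}(t)\,dt$: on $\{T_{(x,-1)}\geq x\}$ the particle hits $0$ at time $x$, contributing $e^{-ax}e^{\lambda x}$; on $\{T_{(x,-1)}=t<x\}$ the particle is at position $x-t$ with velocity $+1$ and then restarts afresh, contributing $\int_0^x a e^{-at}e^{\lambda t}\varphi_+(\lambda, x-t)\,dt$. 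Similarly, from $(x,+1)$ the first flip is an $E(b)$ time $s$, after which the particle is at $x+s$ with velocity $-1$; this gives $\varphi_+(\lambda,x)=\int_0^\infty b e^{-bs}e^{\lambda s}\varphi_-(\lambda,x+s)\,ds$.

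Next I would use the spatial homogeneity of the constant-rate dynamics: by the strong Markov property and translation invariance, $\varphi_-(\lambda,x)=\varphi_-(\lambda,1)^x$ should hold in an exponential form, so I would posit $\varphi_-(\lambda,x)=e^{x c(\lambda)}$ and $\varphi_+(\lambda,x)=\psi(\lambda)e^{x c(\lambda)}$ for functions $c,\psi$ to be determined (the factor structure for $\varphi_+$ follows from the already-noted decomposition $Z_{(x,+1)}\overset{\cL}{=}Z_{(x,-1)}+S_x$ with $S_x$ an excursion whose law does not depend on $x$ under translation invariance — more precisely the excursion above level $x$ reduces to an excursion above $0$). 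Substituting into the two integral equations turns them into algebraic relations: the equation from $(x,+1)$ becomes $\psi(\lambda)=\int_0^\infty b e^{(\lambda-b)s}e^{sc(\lambda)}\,ds=\frac{b}{b-\lambda-c(\lambda)}$, valid when $\lambda+c(\lambda)<b$; the equation from $(x,-1)$ becomes, after computing $\int_0^x a e^{(\lambda-a)t}\psi(\lambda)e^{(x-t)c(\lambda)}\,dt + e^{(\lambda-a)x}$ and demanding it equal $e^{xc(\lambda)}$ for all $x$, a consistency condition that forces $c(\lambda)$ to satisfy a quadratic. Matching the coefficients of $e^{xc(\lambda)}$ and of $e^{(\lambda-a)x}$ separately yields $\frac{a\psi(\lambda)}{c(\lambda)-\lambda+a}=1$ together with $1-\frac{a\psi(\lambda)}{c(\lambda)-\lambda+a}=0$ — i.e.\ one obtains $a\psi(\lambda)=c(\lambda)-\lambda+a$, and combining with $\psi(\lambda)=\frac{b}{b-\lambda-c(\lambda)}$ gives $ab=(c(\lambda)-\lambda+a)(b-\lambda-c(\lambda))$. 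Solving this quadratic in $c(\lambda)$ and selecting the root that is analytic near $\lambda=0$ with $c(0)=0$ (the branch with the minus sign in front of the square root) produces exactly
\[
c(\lambda)=\frac{b-a-\sqrt{(a+b-2\lambda)^2-4ab}}{2},
\]
and back-substitution gives the stated $\psi(\lambda)$.

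Then I would determine the domain. The discriminant $(a+b-2\lambda)^2-4ab$ is nonnegative precisely when $a+b-2\lambda\geq 2\sqrt{ab}$ (using $\lambda$ small, so $a+b-2\lambda>0$), i.e.\ $\lambda\leq \frac{1}{2}(a+b-2\sqrt{ab})=\frac12(\sqrt b-\sqrt a)^2=\lambda_c$; for $\lambda>\lambda_c$ the formulas become complex, which signals that the real Laplace transform is infinite there, and this I would confirm by a direct argument — e.g.\ $Z_{(x,-1)}\geq$ (a geometric-like sum of the independent excursion times $S$) whose Laplace transform already blows up at $\lambda_c$, or by noting that $\varphi_-(\lambda,x)=e^{xc(\lambda)}$ is finite and continuous up to $\lambda_c$ but cannot be extended as a finite convex function beyond. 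I should also check the boundary case $\lambda=\lambda_c$ is genuinely attained (finite value $e^{x(b-a)/2}$), which is immediate from the formula, and verify $c(0)=0$, $\psi(0)=1$ as sanity checks. The main obstacle I anticipate is purely bookkeeping: correctly justifying the exponential/product Ansatz (rather than merely guessing it) via the strong Markov property and translation invariance, and carefully separating the two functional components in the $(x,-1)$ equation to extract the quadratic without sign errors; choosing the correct branch of the square root so that analyticity at $0$ holds is the one genuinely delicate point, and the infiniteness on $(\lambda_c,\infty)$ needs its own short argument rather than being read off the formula.
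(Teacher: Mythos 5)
The paper does not actually prove this proposition: it is quoted verbatim from \cite{FGM1} (``Let us recall this result''), so there is no in-text argument to compare yours against. Your first-step / renewal derivation is nevertheless correct and is the natural route: the two conditional decompositions are right, the substitution of the multiplicative ansatz gives $\psi=\frac{b}{b-\lambda-c}$ and, after matching the linearly independent exponentials $e^{xc}$ and $e^{(\lambda-a)x}$, the single relation $a\psi=c+a-\lambda$, whence $c^2-(b-a)c+\lambda(a+b-\lambda)=0$; the root with $c(0)=0$ is exactly the stated $c(\lambda)$ and back-substitution gives $\psi(\lambda)$, with $\lambda_c$ appearing as the vanishing point of the discriminant $(a+b-2\lambda)^2-4ab$. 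Two points deserve more care than your sketch gives them. First, the manipulation of the renewal equations presupposes $\varphi_\pm(\lambda,\cdot)<\infty$, which is automatic only for $\lambda\leq 0$; for $0<\lambda\leq\lambda_c$ you should either invoke a Lyapunov bound (as the paper does in Proposition \ref{prop:zero} for general rates, giving finiteness up to $\frac12(\sqrt b-\sqrt a)^2$) or argue by analytic continuation on the maximal domain of the Laplace transform, with a monotone-convergence passage to the endpoint $\lambda_c$. Second, the infiniteness on $(\lambda_c,\infty)$ indeed needs its own argument, and the cleanest one is available from your own relations: eliminating $c$ gives $a\psi^2-(a+b-2\lambda)\psi+b=0$, so if $\dE\bigl[e^{\lambda S_0}\bigr]$ were finite for some $\lambda>\lambda_c$ it would be a real root of a quadratic with negative discriminant, a contradiction; this is sharper than the heuristic ``the formula becomes complex''. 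With these two points filled in, the proof is complete.
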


If the jump rates $a$ and $b$ are not constant, the evolution away from the origin is no longer invariant by translation. Consequently, we have to consider a new way 
to estimate the distribution of the hitting time of zero.

\begin{prop}[Hitting time of 0 for general jump rates]\label{prop:zero}
Let $M>0$ such that 
\[
\sqrt{\frac{b(M)}{a(M)}}e^{ M\PAR{\sqrt{b(M)}-\sqrt{a(M)}}^2}\PAR{1-e^{-A(M)}}<1.
\]
Then, the  Laplace transform of the first hitting time $Z_\PAR{x,v}$ of $(0,+1)$ 
starting from $(x,v)\in \dR\times \{-1,+1\}$ satisfies
\[
\dE\SBRA{e^{\lambda Z_\PAR{x,v}}}\leq Ce^{\frac{(x\vee M)(b(M)-a(M))}{2}}  \, \mbox{    for  all } \lambda\leq \frac{1}{2}(\sqrt{b(M)}-\sqrt{a(M)})^2 ,
 \]
where  $C>0$ is an explicit constant depending on $M$,   $a$ and $b$.
\end{prop}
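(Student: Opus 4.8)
The plan is to reduce the whole statement to the compact region $[0,M]$, where a Grönwall-type fixed-point argument controls the Laplace transform, and then to treat $x>M$ by comparison with the constant-rate process with rates $(a(M),b(M))$. Set $\Psi=\sqrt{b(M)/a(M)}>1$, $\gamma=\tfrac12(b(M)-a(M))$ and $\lambda_c=\tfrac12(\sqrt{b(M)}-\sqrt{a(M)})^2$, so that the assumption on $M$ reads $q_0:=\Psi e^{2M\lambda_c}(1-e^{-A(M)})<1$; a direct computation with the explicit constant-rate formulas recalled above gives $c(\lambda_c)=\gamma$, $\psi(\lambda_c)=\Psi$, and $c,\psi$ are nondecreasing on $(-\infty,\lambda_c]$. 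Fix $\lambda\le\lambda_c$; since $e^{\lambda Z_{(x,v)}}\le 1$ when $\lambda\le 0$ we may assume $\lambda\in(0,\lambda_c]$. Write $h(x)=\dE[e^{\lambda Z_{(x,-1)}}]$ and $g(x)=\dE[e^{\lambda Z_{(x,+1)}}]$, a priori valued in $[1,\infty]$. The strong Markov property together with Lemma~\ref{lem:law-time} yields, for all $x\ge0$,
\[
g(x)=\int_x^\infty b(w)\,e^{-(B(w)-B(x))+\lambda(w-x)}\,h(w)\,dw,\qquad
h(x)=e^{-A(x)+\lambda x}+\int_0^x a(y)\,e^{-(A(x)-A(y))+\lambda(x-y)}\,g(y)\,dy .
\]

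First I would reduce the case $x\ge M$. Starting from $(x,v)$ with $x\ge M$, the continuous component stays $\ge M$ until it first touches the level $M$, and it does so with velocity $-1$; hence, by the strong Markov property, $Z_{(x,v)}\overset{\cL}{=}\tau_M+\hat Z$, where $\tau_M$ is the hitting time of $\{M\}$ and $\hat Z$ is an independent copy of $Z_{(M,-1)}$. On $\{X_t\ge M\}$ one has $a(X_t)\le a(M)$ and $b(X_t)\ge b(M)$, so a monotone coupling of the jump times — of the same nature as in Lemmas~\ref{lem:comparTx+}--\ref{lem:comparTx-}, built so that the true path and the constant-rate path issued from the same point keep their continuous components ordered — gives $\tau_M^{(x,v)}\leqs Z^{\mathrm{cst}}_{(x-M,v)}$, the hitting time of $0$ for the $(a(M),b(M))$-process (using its translation invariance). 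With $c(\lambda)\le c(\lambda_c)=\gamma$ and $\psi(\lambda)\le\psi(\lambda_c)=\Psi$ this gives
\[
h(x)\le e^{(x-M)\gamma}\,h(M),\qquad g(x)\le \Psi\,e^{(x-M)\gamma}\,h(M)\qquad(x\ge M).
\]

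Next I would set up the fixed point on $[0,M]$. Let $H=\sup_{[0,M]}h\in[1,\infty]$. For $y\in[0,M]$, split the integral for $g(y)$ at $M$: on $[y,M]$ bound $h\le H$ and use $\int_y^Mb\,e^{-(B(w)-B(y))}dw=1-e^{-(B(M)-B(y))}$; on $[M,\infty)$ insert the bound of the previous step and, via the change of variable $u=B(w)-B(M)$ together with $b\ge b(M)$, use
\[
\int_M^\infty b(w)\,e^{-(B(w)-B(M))+(\lambda+\gamma)(w-M)}\,dw\le\frac{b(M)}{b(M)-\lambda-\gamma}\le\Psi ,
\]
where the last inequality rests on $b(M)-\lambda_c-\gamma=\sqrt{a(M)b(M)}$. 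The two contributions recombine as a convex combination of $1$ and $\Psi$, so $g(y)\le \Psi H\,e^{\lambda(M-y)}$ on $[0,M]$. Feeding this into the equation for $h(x)$ with $x\in[0,M]$, bounding $e^{\lambda(x-y)+\lambda(M-y)}\le e^{2\lambda M}$ and $\int_0^xa\,e^{-(A(x)-A(y))}dy=1-e^{-A(x)}\le 1-e^{-A(M)}$, gives $h(x)\le e^{\lambda_c M}+q_0H$; taking the supremum over $x\in[0,M]$ and using $q_0<1$ yields $H\le e^{\lambda_c M}/(1-q_0)<\infty$. Finally, for $x\le M$ one has $\dE[e^{\lambda Z_{(x,v)}}]\le\max(h(x),g(x))\le\Psi e^{\lambda_c M}H$, while for $x\ge M$ the first step gives $\dE[e^{\lambda Z_{(x,v)}}]\le\Psi e^{(x-M)\gamma}H$; hence $\dE[e^{\lambda Z_{(x,v)}}]\le C\,e^{\gamma(x\vee M)}$ with $C=\Psi e^{\lambda_c M}H=\tfrac{q_0}{(1-e^{-A(M)})(1-q_0)}$, which is the assertion since $\gamma=\tfrac12(b(M)-a(M))$.

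The delicate point is the monotone coupling behind $\tau_M^{(x,v)}\leqs Z^{\mathrm{cst}}_{(x-M,v)}$: this is monotonicity of a hitting time in the jump-rate \emph{functions}, not in the starting point, and a naive ``shared exponential clock'' coupling breaks down as soon as the two paths carry different velocities after a desynchronising jump. The fix is to let the lagging process run with an independent clock during such intervals and to re-couple at the next jump of either path, checking that the gap between the two continuous components never decreases before the level $M$ is reached; this is exactly the kind of explicit jump-time coupling the paper develops for the reflected process, and once it is available the remaining estimates are routine, the constants having been arranged precisely so that the contraction ratio equals the $q_0$ appearing in the hypothesis on $M$.
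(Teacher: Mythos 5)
Your argument is correct in substance but follows a genuinely different route from the paper's. The paper controls the excursion above level $M$ with a Lyapunov function $e^{\alpha x+\beta v}$ for the generator (optimising $\alpha,\beta$ to obtain the rate $\rho(M)=\frac12(\sqrt{b(M)}-\sqrt{a(M)})^2$ and the spatial exponent $\alpha=\frac12(b(M)-a(M))$), and then handles the part below $M$ by a geometric-trials decomposition of $Z_{(M,-1)}$ at the single point $M$ (first jump before reaching $0$, or not). You replace the Lyapunov step by a stochastic comparison with the constant-rate $(a(M),b(M))$ process, whose hitting-time Laplace transform is explicit from \cite{FGM1}, and you replace the single-point renewal by a contraction argument for the pair of first-jump integral equations on all of $[0,M]$. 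The two routes yield the same exponents, and your contraction ratio $q_0=\Psi e^{2M\lambda_c}\PAR{1-e^{-A(M)}}$ is exactly the quantity the hypothesis on $M$ requires to be $<1$ (the same one the paper's geometric series needs, since $e^{2\lambda_c\beta(M)/\rho(M)}=\sqrt{b(M)/a(M)}$ there). Your version makes more transparent where the hypothesis on $M$ enters, at the price of importing the explicit constant-rate formulas; the Lyapunov route is self-contained and does not need a comparison process.

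Two points need tightening before this is a complete proof. First, the domination $\tau_M^{(x,v)}\leqs Z^{\mathrm{cst}}_{(x-M,v)}$ is \emph{not} one of the couplings the paper constructs: Lemmas~\ref{lem:comparTx+} and~\ref{lem:comparTx-} compare jump times of the same dynamics from different starting points, whereas you need monotonicity of a hitting time in the rate \emph{functions}. The statement is true and the coupling is manageable: when both velocities are $+1$, let every switch of the constant-rate copy force a switch of the true copy (legitimate since $b(X_t)\geq b(M)$ above level $M$), and when both are $-1$, let every switch of the true copy force a switch of the constant-rate copy (since $a(X_t)\leq a(M)$); one then checks that the configuration ``true copy ascending, comparison copy descending'' is never reached, so $X_t\leq X^{\mathrm{cst}}_t$ up to the hitting time of $M$. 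This deserves to be stated and proved as a separate lemma rather than waved at. Second, the fixed-point step $H\leq e^{\lambda_c M}+q_0H\Rightarrow H\leq e^{\lambda_c M}/(1-q_0)$ presupposes $H<\infty$; you should either iterate the two integral equations and sum the resulting geometric series, or run the argument for truncated functionals such as $\dE\SBRA{e^{\lambda(Z\wedge T_n)}}$ and pass to the limit by monotone convergence. (The paper's own inequality $\varphi\leq A+q\varphi$ has the same feature and admits the same fix, so this is a shared technicality rather than a flaw specific to your approach.)
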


\begin{proof} We first notice that $f(x,v)=e^{\alpha x+\beta v}$ with $\alpha,\beta>0$, 
is a Lyapunov function for the  infinitesimal generator $A$ of ${((X_t,V_t))}_{t\geq 0}$ 
defined by \eqref{eq:giabs}.
More precisely,  we have \begin{align*}
&Af(x,+1)=f(x,+1)\SBRA{\alpha -b\PAR{x}\PAR{1-e^{-2\beta}}},\\
&Af(x,-1)=f(x,-1)\SBRA{-\alpha +a(x)\PAR{e^{2\beta}-1}}
\end{align*}
for all $x>0$. If we choose $\alpha,\beta>0$ and a compact set $K=[0,M]\times\{-1,1\}$ such that 
\begin{equation}\label{eq:betaM}
-\alpha+a(M)\PAR{e^{2\beta}-1}<0
\quad\text{and}\quad 
\alpha-b(M)\PAR{1-e^{-2\beta}}<0,
\end{equation}
by  monotony of $a$ and $b$ there are
$\rho=\rho(\alpha,\beta,M,a)>0$ to be specified and $c>0$ such that
\[
Af(x,v)\leq -\rho f(x,v)+c\ind_K(x,v).
\]
 The Laplace 
transform of the first hitting time  of $[0,M)\times\{-1,1\}$ starting from $(x,v)$, denoted 
$\tau_{(x,v)}:=\inf\{t> 0: (X_t,V_t)\in [0,M)\times\{-1,1\} \}
$, 
 can then  classically  be controlled  as follows: 
\begin{align*}
f(X_{t\wedge \tau_{(x,v)}},V_{t\wedge \tau_{(x,v)}})e^{\rho t}
&=f(x,v)+\int_0^{t\wedge \tau_{(x,v)}}\!\left[Af(X_s,V_s)+\rho f(X_s,V_s)\right]e^{\rho t}ds
+N_{t\wedge \tau_{(x,v)}}\\
&\leq f(x,v)+N_{t\wedge \tau_{(x,v)}}
\end{align*}
where $(N_t)_{t\geq 0}$ is a martingale with respect to the filtration generated 
by $((X_t,V_t))_{t\geq 0}$.
Taking the expectation in the previous inequality we deduce that, for $x>M$,
\begin{equation}\label{eq:LaplaceTau}
\dE\SBRA{e^{\rho \tau_{(x,v)}}}\leq e^{{\alpha (x-M)+\beta(v+1)}}.
\end{equation}
We next choose  $\alpha, \beta>0$  in order to optimize $\rho$. For $\alpha,\beta,M>0$ 
satisfying~\eqref{eq:betaM}, we set
\begin{align*}
\rho= \SBRA{b(M)(1-e^{-2\beta})-\alpha}\wedge\SBRA{\alpha-a(M)(e^{2\beta}-1)}.
\end{align*}
First we choose $\alpha>0$ such that
\[
2\alpha=b(M)(1-e^{-2\beta})+a(M)(e^{2\beta}-1)
\]
and then
\begin{equation}\label{eq:beta}
\beta=\frac{1}{4}\SBRA{\log\PAR{b(M)}-\log\PAR{ a(M)}}.
%\rho= \frac{1}{2}\SBRA{b(M)(1-e^{-2\beta})-a(M)(e^{2\beta}-1)}.
\end{equation}
With this choice, we have 
\begin{equation}\label{eq:rho}
2\alpha =b(M)-a(M)
\quad\text{and}\quad
\rho(M)=\frac{1}{2}\PAR{\sqrt{b(M)}-\sqrt{a(M)}}^2.
\end{equation}
Condition~\eqref{eq:betaM} is satisfied for any $M>0$ since 
\[
-\alpha+a(M)\PAR{e^{2\beta}-1}
=\alpha-b(M)\PAR{ 1-e^{-2\beta}}=-\frac{1}{2}\PAR{\sqrt{b(M)}-\sqrt{a(M)}}^2.
\]
We  now obtain an estimate for the Laplace transform of $Z_\PAR{x,+1}$. Let $M>0$ 
be arbitrarily fixed for the moment and $\lambda>0$ such that $\lambda\leq \rho(M)$. 
For $x\geq M$, we have 
\begin{equation}\label{eq:Zx}
Z_{(x,+1)}\overset{\cL}{=}\tau_{(x,+1)}+Z_{(M,-1)}
\end{equation}
where $\tau_{(x,+1)}$ and $Z_{(M,-1)}$ are independent.

\noindent Let us denote by $T_1$ and $T_2$ the first two jumps of $V$. From Figure~\ref{fi:cases} we see that, before a path  starting from $(M,-1)$ hits $(0,+1)$,   either    
\begin{enumerate}
 \item $T_1=M$, in which case we have 
$
Z_{(M,-1)}=M; 
$ 
 \item $T_1<M$ and $T_2>T_1$, in which case we have  
 \[
Z_{(M,-1)}=2T_1+Z_{(M,+1)}\leqs 2M+\tau_{(M,+1)}+\tilde Z_{(M,-1)}; \mbox{  or } 
\]  
 \item $T_1<M$ and $T_2\leq T_1$, and then
\[
Z_{(M,-1)}=T_1+T_2+Z_{(M-T_1+T_2,-1)}\leqs 2M+\tilde Z_{(M,-1)},
\]  
with $\tilde Z_{(M,-1)}$ an independent hitting time of zero starting from $(M,-1)$.
\end{enumerate}

\begin{figure}\label{fi:cases}
\begin{center}
\begin{tikzpicture}[xscale=1,yscale=1]
\draw[->] (0,0) -- (9.5,0);
\draw [->] (0,-0.5) -- (0,4);
\draw[dashed] (0,3) -- (5.9,3);
\draw (0,3) node[left] {\scriptsize{$M$}};
\draw [color=blue] (0,3) -- (2,1);
\draw [dashed,color=blue] (2,1) -- (3,0);
\draw [color=blue] (2,1) -- (3.2,2.2);
\draw [dashed,color=blue] (3.2,2.2) -- (4,3);
\draw [dashed,color=green] (4,3) -- (4.5,3.5);
\draw [dashed,color=green] (4.5,3.5) -- (5,3);
\draw [dashed,color=red] (5,3) -- (6.5,1.5);
\draw [dashed,color=red]  (6.5,1.5) -- (7,2);
\draw [dashed,color=red]  (7,2) -- (9,0);
\draw [dashed] (3.2,2.2) -- (5.9,2.2);
\draw [dashed,color=red] (3.2,2.2) -- (3.9,1.5);
\draw [dashed,color=red] (3.9,1.5) -- (4.6,2);
\draw [dashed,color=red] (4.6,2) -- (6.6,0);
\draw (2.8,0.5) node {\scriptsize{$1.$}};
\draw (3.3,2.7) node {\scriptsize{$2.$}};
\draw (3.8,1.9) node{\scriptsize{$3.$}};
\draw [dashed] (2,1) -- (2,0);
\draw (2,0) node[below] {\scriptsize{$T_1$}};
\draw [dashed] (3.2,0) -- (3.2,2.2);
\draw (3.4,0) node[below] {\scriptsize{$T_2$}};
\end{tikzpicture}
\end{center}
\caption{The three different types of paths from $(M,-1)$ to $(0,+1)$.}
\end{figure} 

\noindent Lemma~\ref{lem:law-time} ensures that $\dP_{(M,-1)}(T_1=M)=e^{-A(M)}$. As a consequence, 
if $\varphi$ is the Laplace transform of $Z_{(M,-1)}$, one has 
\[
\varphi(\lambda)\leq e^{-A(M)} e^{\lambda M}
+\PAR{1-e^{-A(M)}} e^{2\lambda M} \dE\SBRA{e^{\lambda \tau_{(M,+1)}}}
\varphi(\lambda).
\]
For any $\lambda<\rho(M)$ we get, thanks to Hölder inequality and \eqref{eq:LaplaceTau}, that
\[
\dE\SBRA{e^{\lambda \tau_{(M,+1)}}}
\leq \PAR{\dE\SBRA{e^{\rho(M) \tau_{(M,+1)}}}}^{\lambda/\rho(M)} 
\leq e^{2\lambda\beta(M)/\rho(M)}.
\]
Thanks to~\eqref{eq:beta}, if $M$ is chosen in order  that 
\[
\sqrt{\frac{b(M)}{a(M)}}e^{ M\PAR{\sqrt{b(M)}-\sqrt{a(M)}}^2}\PAR{1-e^{-A(M)}}<1
\]
then for any $\lambda\leq \rho(M)$, $\varphi$ is finite and
\[
\varphi(\lambda)\leq 
\frac{e^{-A(M)} e^{\lambda M}}{1-\PAR{1-e^{-A(M)}}e^{2\lambda(M+\beta(M)/\rho(M))}}.
\]
Combining \eqref{eq:Zx} and \eqref{eq:LaplaceTau} with the previous estimate completes the proof 
in the main case $x\geq M$.  
If $x\leq M$ then 
\[
Z_{(x,-1)}\leqs Z_{(M,-1)}\quad\text{ and }\quad Z_{(x,+1)}\leqs M+\tau_{(M,+1)}+\tilde Z_{(M,-1)},
\]
and one can conclude as in the previous case.  
\end{proof}

%%%%%%%%%%%%%%%%%%%%%
\section{The coupling time for the reflected process}\label{sec:coalescent-refl}

This section is dedicated to the construction of a coalescent coupling of 
two paths of the reflected process driven by~\eqref{eq:giabs} starting from two different initial 
conditions. 

%%%%%%%%%%%%%%%%%%%%%
\subsection{The first crossing time}\label{subsec:crossing}

Let us consider $(x,v)$ and $(\tilde x,\tilde v)$ two initial data with $\tilde x<x$. The 
first crossing time of two paths $(X,V)$ and $(\tilde X,\tilde V)$ starting respectively 
from $(x,v)$ and $(\tilde x,\tilde v)$ is defined by 
\[
T_c=T_c(x,v,\tilde x,\tilde v)=\inf\BRA{t\geq 0\ :\ X_t=\tilde X_t}. 
\]
%For constant jump rates, it is possible (see~\cite{FGM1}) to couple both 
%paths in such a way that  that the first crossing point is controlled by
%\[
%X_{T_c}\leq_\mathrm{sto.} {x-\tilde x\over 2}+E(a+b)\wedge \tilde x
%\]
%with $E(a+b)$ an exponential variable with mean $1/(a+b)$. 
Since $(X_t)_{t\geq 0}$ is continuous, $T_c$ is stochastically smaller than  
  the hitting time of zero  $ Z_\PAR{x,v}$ of the initially upper path,  whatever the joint law of the pair. The first 
 crossing point $X_{T_c}$ is such that 
\begin{equation}\label{eq:CrossPoint}
X_{T_c}\, \leq \sup_{t\in[0,Z_\PAR{x,v}]}X_t -(x-\tilde x)
\, \leq \, {1\over 2}\PAR{Z_\PAR{x,v}+\tilde x-x}.
\end{equation}
 Notice that at time $T_c$ the  two velocities are opposites. 

\subsection{A way to stick the two paths}\label{sec:stick}

In what follows we assume that $(X_0,V_0)=(x,+1)$ and
$(\tilde X_0,\tilde V_0)=(x,-1)$ and construct  two paths which
are equal after a coalescent time $T_{*}(x)$. The successful coupling 
consists in producing a jump of (exactly) one of the two velocities $V$ or $\tilde V$ 
at a crossing time of their position components $X$ and $\tilde X$. We will use  the  coupled jump-times studied in Lemmas \ref{lem:comparTx+} and \ref{lem:comparTx-} in order to minimize the time required to do so, in the spirit of  \cite{FGM1}.

To be more precise, given  $x>0$ fixed, let us   denote by $U_+$ 
and $U_-$ (respectively $L_-$ and $L_+$) the first and the second inter-jump time lapses
of the  path starting from $(x,+1)$ (resp. starting from $(x,-1)$). These random variables 
are constructed 
as follows. We first choose $U_+$  with distribution $T_\PAR{x,+1}$ and $L_-$ 
with distribution $T_\PAR{x,-1}$ independently. We then define $U_-$ and $L_+$ 
in such a way that $(U_+,L_+)$ and $(U_-,L_-)$ have  the laws of the  couplings 
defined in Lemmas \ref{lem:comparTx+} and \ref{lem:comparTx-} respectively 
and that $L_+ -U_+$ and $U_-  - L_- $ are independent conditionally on 
$(U_+,L_-)$. More precisely, conditionally on  $U_+$ and $L_-$, we introduce 
two independent Bernoulli variables $\xi$ and $\chi$  with 
\begin{equation*}
\begin{split}
\dP(\xi=1\vert U_+,L_-)= & \, \frac{b(x+U_+)-b(x-L_-+U_+)}{b(x+U_+)}  
\quad \mbox{ and }\\
\dP(\chi=1\vert U_+,L_-) = &  
\, \frac{a(x-L_-)-a(x+U_+-L_-)}{a(x-L_-)}\ind_\BRA{L_-<x}+ \ind_\BRA{L_-=x}\\
\end{split}
\end{equation*}
and two independent random variables  $L_+ -U_+$ and  $U_-  - L_-$  with 
the same law as $T_{x+U_+-L_-,+1}$  and $T_{x+U_+-L_-,-1}$ respectively. 
Then we set
\[
L_+:=U_+ + \xi(L_+ -U_+)  \quad  \text{    and  }  \quad U_-:= L_- +\chi  (U_-  - L_- ).
\]
Figure~\ref{fi:coupling} shows the four possible outcomes. Those where exactly one 
of the Bernoulli variables is equal to $1$ allow us to  stick the paths at time $U_+ + L_-$ 
(i.e. on  the rightmost corner of the rectangle):  the velocities of the two paths  are the 
same right after that instant, and the  overshot length (beyond the  rectangle's corner) 
determined by the previous  coupling  is compatible with the law of the two marginal 
processes from that moment on (because of their Markov property). We then say 
that the coupling attempt succeeded, and this happens conditionally on $(U_+,L_-)$ 
with probability
\begin{equation}\label{eq:defp} 
\begin{split} \dP & (\xi=0,\chi=1\vert U_+,L_-)+\dP(\xi=1,\chi=0\vert U_+,L_-)=\\
\, \,  & \left[ \frac{b(x-L_-+U_+)}{b(x+U_+)}\PAR{1-\frac{a(x-L_-+U_+)}{a(x-L_-)}}
+ \PAR{1-\frac{b(x-L_-+U_+)}{b(x+U_+)}}\frac{a(x-L_-+U_+)}{a(x-L_-)} \right] \ind_\BRA{L_-<x}\\
 &  +  \frac{b(U_+)}{b(x+U_+)}\ind_\BRA{L_-=x}. \\ 
 \end{split}
\end{equation}

Observe that the success or failure of the coupling attempt  is determined by 
the Bernoulli random variables $\xi$ and $\chi$. If the coupling attempt fails, 
the two trajectories cross or bounce off of each other at  time $U_+ + L_-$ 
and by similar reason as before the (already determined) lengths $(L_+ -U_+)$ 
and  $(U_-  - L_- )$ can be used to restart two (upward and downward) trajectories 
from  $x-L_-+U_+$, independently of each other conditionally on the past and 
consistently with the pathwise laws of each of the two processes. 

The coupling construction is now obvious:  we repeat this scheme starting 
from the new crossing or bouncing point, until we succeed in sticking the two paths.
Notice that this iterative algorithm is more efficient than the general procedure 
of the Meyn-Tweedie method~\cite{MT2} since, after a fail, the two processes have 
already the same position (and still opposite velocities). 

\begin{figure}
\begin{center}
\begin{tikzpicture}[xscale=0.45,yscale=0.45]
\begin{scope}
\draw[->] (0,0) -- (7,0);
\draw [->] (0,-0.5) -- (0,7);
\draw (0,3.2) node[left] {\scriptsize{$x$}};
\draw [color=green](6.8,5) node[left] {\scriptsize{$X_t$}};
\draw [color=blue](6.8,2) node[left] {\scriptsize{$\tilde X_t$}};
\draw [color=green] (0,4) -- (2,6);
\draw [color=blue] (0,4) -- (3,1);
\draw [color=green] (2,6) -- (5,3);
\draw [color=blue] (3,1) -- (5,3);
\draw [dashed] (5,0) -- (5,3);
\draw (5,0) node[below ] {\scriptsize{$T_{cc}$}};
\draw [color=green] (5,3) -- (5.5,3.5);
\draw [color=blue] (5,3) -- (5.5,3.5);
\draw (0.8,5) node[above ] {\scriptsize{$U_+$}};
\draw (4,4.5) node[above ] {\scriptsize{$U_-$}};
\draw (1.3,2.6) node[below ] {\scriptsize{$L_-$}};
\draw (4.2,1.7) node[below ] {\scriptsize{$L_+$}};
\draw (5,-1) node[below ] {\scriptsize{Case 1: $U_+<L_+$ and $U_-=L_-$}};
\end{scope}

\begin{scope}[xshift=12.5cm,yshift=0cm]
\draw[->] (0,0) -- (7,0);
\draw [->] (0,-0.5) -- (0,7);
\draw (0,3.2) node[left] {\scriptsize{$x$}};
\draw [color=green](6.8,5) node[left] {\scriptsize{$X_t$}};
\draw [color=blue](6.8,2) node[left] {\scriptsize{$\tilde X_t$}};
\draw [color=green] (0,4) -- (2,6);
\draw [color=blue] (0,4) -- (3,1);
\draw [color=green] (2,6) -- (5,3);
\draw [color=blue] (3,1) -- (5,3);
\draw [dashed] (5,0) -- (5,3);
\draw (5,0) node[below ] {\scriptsize{$T_{cc}$}};
\draw [color=green] (5,3) -- (5.5,2.5);
\draw [color=blue] (5,3) -- (5.5,2.5);
\draw (5,-1) node[below ] {\scriptsize{Case 2: $U_+=L_+$ and $U_->L_-$}};
\end{scope}

\begin{scope}[xshift=0cm,yshift=-9.5cm]
\draw[->] (0,0) -- (7,0);
\draw [->] (0,-0.5) -- (0,7);
\draw (0,3.2) node[left] {\scriptsize{$x$}};
\draw [color=green](6.8,5) node[left] {\scriptsize{$X_t$}};
\draw [color=blue](6.8,2) node[left] {\scriptsize{$\tilde X_t$}};
\draw [color=green] (0,4) -- (2,6);
\draw [color=blue] (0,4) -- (3,1);
\draw [color=green] (2,6) -- (5,3);
\draw [color=blue] (3,1) -- (5,3);
\draw [dashed] (5,0) -- (5,3);
\draw (5,0) node[below ] {\scriptsize{$T_c$}};
\draw [color=green] (5,3) -- (5.5,2.5);
\draw [color=blue] (5,3) -- (5.5,3.5);
\draw (5,-1) node[below ] {\scriptsize{Case 3: $U_+<L_+$ and $U_->L_-$}};
\end{scope}

\begin{scope}[xshift=12.5cm,yshift=-9.5cm]
\draw[->] (0,0) -- (7,0);
\draw [->] (0,-0.5) -- (0,7);
\draw (0,3.2) node[left] {\scriptsize{$x$}};
\draw [color=green](6.8,5) node[left] {\scriptsize{$X_t$}};
\draw [color=blue](6.8,2) node[left] {\scriptsize{$\tilde X_t$}};
\draw [color=green] (0,4) -- (2,6);
\draw [color=blue] (0,4) -- (3,1);
\draw [color=green] (2,6) -- (5,3);
\draw [color=blue] (3,1) -- (5,3);
\draw [dashed] (5,0) -- (5,3);
\draw (5,0) node[below ] {\scriptsize{$T_c$}};
\draw [color=green] (5,3) -- (5.5,3.5);
\draw [color=blue] (5,3) -- (5.5,2.5);
\draw (5,-1) node[below ] {\scriptsize{Case 4: $U_+=L_+$ and $U_-=L_-$}};
\end{scope}
\end{tikzpicture}
\end{center}
\caption{Position of both paths after one step.}
\label{fi:coupling}
\end{figure}
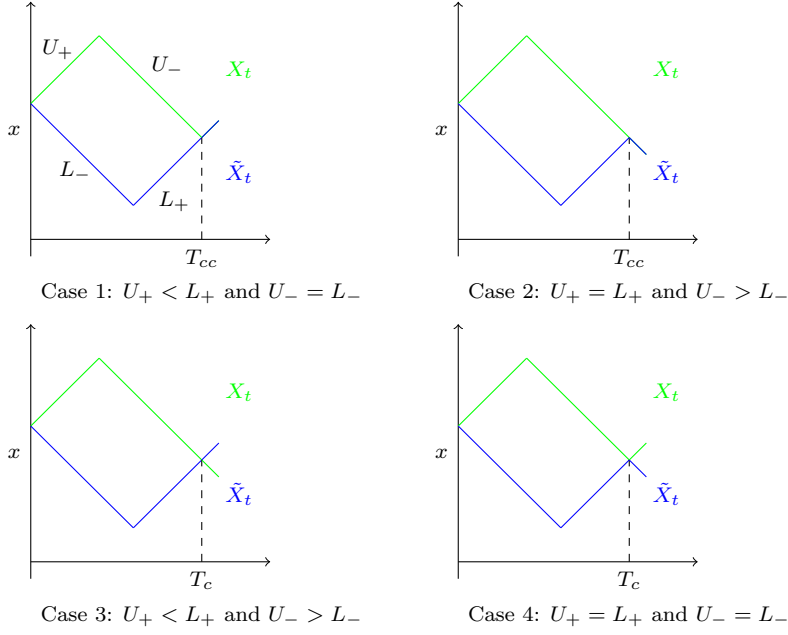

We point out that the coupling scheme implemented for the reflected process 
in \cite{FGM1} in the constant jump rates case is a particular case of the above 
described scheme. Notice however that here, in general, the upper  path starting 
from $(x,+1)$ does not necessarily remain 
above the other path until the coupling time. We then cannot control the coupling 
time by the hitting time of $0$ for the process starting at $(x,+1)$ as it was done 
in \cite{FGM1}. On the other hand, contrary to the constant rates case where the 
coupling  could only succeed right after  the lower process hit $0$, the coupling 
can now succeed at an arbitrary step of the scheme, though not with a probability 
bounded from below uniformly in $x$ (this can be easily seen from 
formula~\eqref{eq:defp} e.g. in the case case when $a$ is constant and 
$b(x)=a+x$). Therefore, a new approach to estimate the coupling time is needed, 
which is developed in next subsection.

\subsection{Coupling time from a crossing point}
\label{sec:CouplingTime}

In this section we will use the notation $\dP_x$ (resp. $\dE_x$) for the distribution 
(resp. the expectation) of a random variable associated with the coupling scheme 
given that  the two copies started at position $x>0$.

\noindent We first observe that  for  fixed $R>0$,  the probability of success in one step 
can be bounded from below  (considering the last term in \eqref{eq:defp} and 
taking expectation) uniformly over $x\in[0,R]$ by some number $p_R\in (0,1)$ 
satisfying
\begin{equation}\label{eq:pR}
p_R\geq  e^{-A(R)}\int_0^\infty b(u)e^{-\int_0^{u}b(R+s)ds}du.
\end{equation} 
This suggests that the number of trials below a fixed height $R>0$ required in 
order to get a successful coupling can be stochastically  dominated by some geometric random 
variable. Notice that we do not expect a successful coupling to occur only below 
level $R$. We will rather use the above remark in order to construct the scheme 
in such a way that the coalescent time will be always smaller than or equal to some 
real random variable that we can control in terms of  geometric number of 
positive time lapses. 

First, we define a sequence of ``rectangles'' of potential  trajectories of the two 
copies in the coupling scheme, on which  the two copies will live at all times, 
irrespective of whether the coupling attempt has  already been successful or 
not (of course once it has been so, their positions and velocities coincide from that moment 
on).  More precisely, we define a discrete time Markov chain $(\Phi_n)_{n\geq 0}$ 
starting at $x$ by
\begin{equation}\label{eq:phi}
\begin{split}
\Phi_0=& \, x, \\
\Phi_{n+1} = & \, \Phi_n + T^{n+1}_{( \Phi_n,+1)}- T^{n+1}_{( \Phi_n,-1)},\\
\end{split}
\end{equation}
where conditionally on all the past  up to (and including) time $n$, 
$T^{n+1}_{( \Phi_n,+1)}$ and $T^{n+1}_{( \Phi_n,-1)}$ are independent 
and respectively equal in law to $T_{(y,+1)}$  and $T_{(y,-1)}$ on the event 
$\BRA{\Phi_{n}=y}$. Plainly, $(\Phi_n)_{n\geq 1}$ describes  the height of the 
right-most corner of the $n-$th rectangle obtained by iterating the construction 
of Figure~\ref{fi:coupling}. Consider also the sequence of positive random variables 
(real time lengths) $(\sigma_n)_{n\geq 0}$ defined by 
\begin{equation}\label{eq:sigma}
\begin{split}
\sigma_0=& 0, \\
\sigma_{n+1} = &  T^{n+1}_{( \Phi_n,+1)}+  T^{n+1}_{( \Phi_n,-1)},
\end{split}
\end{equation}
which give the (real) time-position of the rectangles'  right-most corners, and 
finally set  $\Sigma_{n}=\sum_{i=1}^{n} \sigma_i $, with the convention $\Sigma_{0}=0$. 
Following Lemmas \ref{lem:comparTx+} and \ref{lem:comparTx-} and in order 
to determine at which attempt the coupling is successful, we introduce two 
sequences $(\xi_n)_{n\geq 1}$ and $(\chi_n)_{n\geq 1}$ of Bernoulli random 
variables, conditionally independent of each other given 
$\PAR{\sigma_k,\Phi_k}_{k\geq 0}$  and  such that for $n\geq 0$, 
on $\BRA{\Phi_{n}=y,T^{n+1}_{( \Phi_n,+1)}=t,T^{n+1}_{( \Phi_n,-1)}=s}$, 
\begin{equation*}
\begin{split}
\dP(\xi_{n+1}= & \, 1\vert \PAR{\sigma_k,\Phi_k}_{k\leq n +1} )
=\frac{b(y+t)-b(y-s+t)}{b(y+t)},\quad \mbox{ and }\\
\dP(\chi_{n+1}=& \, 1\vert  \PAR{\sigma_k,\Phi_k}_{k\leq n+1}  )
=\frac{a(y-s)-a(y+t-s)}{a(y-s)}\ind_\BRA{s<y}+ \ind_\BRA{s=y}.
\end{split}
\end{equation*}
We also set  $\xi_0=\chi_0=1$ for notational simplicity. Observe that $\PAR{\sigma_n,\Phi_n, \xi_n,\chi_n}_{n\geq 0}$ is  Markov process. We denote by 
$\PAR{\cF_n}_{n\geq 0}$ the filtration  it generates. 
We then define a sequence of  random variables $(\kappa_n)$ by 
\[
\kappa_n=\ind_\BRA{\xi_n=1,\chi_n=0}+\ind_\BRA{\xi_n=0,\chi_n=1}.
\]
According to the discussion at the end of the previous subsection, 
the discrete time instant  (or rectangle number) at which the coupling succeeds is
\[
\rho:=\inf\BRA{n\geq 1: \kappa_n=1}
\]
and the real time spent in order that this happens is
\[
T_*:=\Sigma_\rho=\sum_{i=1}^{\rho}\sigma_i.
\]
The trajectories of the two copies can be easily constructed from the previous objects, but we actually  do not need to work with them.

\noindent Let us now  introduce the discrete random variable
\[
\rho_R:=\inf\BRA{n\geq 1: \kappa_n=1 \text{ and } \Phi_{n-1}<R}.
\]
Both $\rho$ and $\rho_R$ are stopping times with respect to $\PAR{\cF_n}_{n\geq 0}$. 
Since $\rho\leq \rho_R$ a.s., we clearly have
\begin{equation}\label{eq:majcoupling}
T_*\leq T_*^R :=\sum_{i=1}^{\rho_R}\sigma_i.
\end{equation}

%\smallskip 
%
%
%\medskip
%Finally, we introduce another coalescent coupling $\PAR{(Z_t,K_t),(\tilde Z_t,\tilde K_t)}_{t\geq 0}$ starting at $\PAR{(x,+1),(x,-1)}$ with similar paths but not with same distributions. The difference is in the choice of the velocities at each crossing point.
%The successive velocities at each crossing point are defined by:
%\begin{enumerate}
%\item  for $n< \rho_R$, we take $K_{\Sigma_{n}}=+1$ and $\tilde K_{\Sigma_{n}}=-1$,
%\item  for $n\geq \rho_R$, we take $K_{\Sigma_{n}}=\tilde K_{\Sigma_{n}}=+1$.
%\end{enumerate}
%Then we construct by parts $(Z_t,K_t)_{t\geq 0}$ and $(\tilde Z_t,\tilde K_t)_{t\geq 0}$ following \eqref{eq:couplingpath} replacing $(X,V)$ with $(Z,K)$ and $(X,V)$ with $(\tilde Z,\tilde K)$ respectively.
%
%
%\bigskip
%Each path of $\PAR{(Z_t,K_t),(\tilde Z_t,\tilde K_t)}_{t\geq 0}$  evolves on the successive parallelograms 
%\[
%\PAR{\Phi_n,\Phi_n + T_{(\Phi_{n},+1)},\Phi_n - T_{(\Phi_{n},-1)},\Phi_n + T_{(\Phi_{n},+1)}- T_{(\Phi_{n},-1)}}_{n\geq 0}\] 
%but we allow both paths to stick only after a crossing point smaller than $R$. The coupling time of this coalescent coupling is $T_{R*}=\Sigma_{\rho_R}$ and we have 
%\begin{equation}\label{eq:majcoupling}
%T_*\leq T_{R*}.
%\end{equation}

Our goal now is to exhibit an  upper bound  for the Laplace transform of the 
random time $T_*^R$ under $\dP_x$. We need to introduce  the stopping 
time (with respect to  $\PAR{\cF_n}_{n\geq 0}$)
\begin{equation}\label{tauRx}
\tau_R(x):=\inf\{ n\geq  0 \, : \, \Phi_n \in [0,R]\} ,
\end{equation}
 and the real time 
\begin{equation}\label{SigmatauRx}
 \Sigma_{\tau_R}(x):=\sum_{i=0}^{\tau_R(x)} \sigma_i 
 \end{equation}
 accumulated when the sequence $(\Phi_n)_{n\geq 0}$ reaches $[0,R]$ for the 
 first time.  Let $\varphi_\PAR{x,v}$ denote  the Laplace transform of 
 $T_\PAR{x,v}$ for $(x,v)\in\dR^+\times\BRA{-1,+1}$. We have

\begin{lem}\label{lem:taR}
Assume there exist  positive real numbers  $R, \lambda, \beta$  such that 
$\lambda<\beta$, $\lambda+\beta<\bar b$ and  
$\varphi_\PAR{R,+1}(\beta+\lambda) \varphi_\PAR{R,-1}(\lambda-\beta)  <1$. 
Then, $ \tau_R<\infty$  a.s. and 
  \begin{equation*}
 \dE_x\left[e^{\beta \Phi_{\tau_R}+\lambda  \sum_{i=0}^{\tau_R} \sigma_i}\eta^{\tau_R}\right]
 \leq e^{\beta x  \ind_{x> R}}
 \end{equation*}
 where $\eta:= (\varphi_\PAR{R,+1}(\beta+\lambda) \varphi_\PAR{R,-1}(\lambda-\beta))^{-1}>1$. 
  \end{lem}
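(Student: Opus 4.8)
The plan is the usual Lyapunov/optional‑stopping scheme: build a nonnegative supermartingale from the exponential weight $V(y)=e^{\beta y}$ evaluated along $(\Phi_n)$ stopped at $\tau_R$, deduce $\tau_R<\infty$ a.s.\ from the fact that the geometric factor $\eta>1$ forces the supermartingale to blow up on $\{\tau_R=\infty\}$, and then extract the Laplace bound by Fatou. We may assume $x>R$ throughout; if $x\le R$ then $\tau_R=0$ and there is nothing to do beyond the definitions.

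The heart of the proof is a uniform one‑step drift estimate. Conditionally on $\cF_n$ and on $\{\Phi_n=y\}$, the variables $T^{n+1}_{(y,+1)}$ and $T^{n+1}_{(y,-1)}$ are independent with the laws of $T_{(y,+1)}$ and $T_{(y,-1)}$, and since $\beta\Phi_{n+1}+\lambda\sigma_{n+1}=\beta y+(\beta+\lambda)T^{n+1}_{(y,+1)}+(\lambda-\beta)T^{n+1}_{(y,-1)}$ one gets
\[
\dE\SBRA{e^{\beta\Phi_{n+1}+\lambda\sigma_{n+1}}\mid\cF_n}=e^{\beta y}\,\varphi_{(y,+1)}(\beta+\lambda)\,\varphi_{(y,-1)}(\lambda-\beta)\qquad\text{on }\{\Phi_n=y\}.
\]
Now I would use Lemma~\ref{lem:EvolTx}: for $y\ge R$ one has $T_{(y,+1)}\leqs T_{(R,+1)}$ and $T_{(y,-1)}\geqs T_{(R,-1)}$. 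Since $\beta+\lambda>0$ this gives $\varphi_{(y,+1)}(\beta+\lambda)\le\varphi_{(R,+1)}(\beta+\lambda)$, which is finite because $\beta+\lambda<\bar b$ (Lemma~\ref{lem:Laplace}); since $\lambda-\beta<0$ the map $t\mapsto e^{(\lambda-\beta)t}$ is nonincreasing, so $\varphi_{(y,-1)}(\lambda-\beta)\le\varphi_{(R,-1)}(\lambda-\beta)\le1$. Both factors being positive, for every $y\ge R$,
\[
\dE\SBRA{e^{\beta\Phi_{n+1}+\lambda\sigma_{n+1}}\mid\cF_n}\le e^{\beta\Phi_n}\,\varphi_{(R,+1)}(\beta+\lambda)\,\varphi_{(R,-1)}(\lambda-\beta)=\eta^{-1}e^{\beta\Phi_n}\quad\text{on }\{\Phi_n=y\}.
\]

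With this in hand I would set $M_n:=\exp\PAR{\beta\Phi_{n\wedge\tau_R}+\lambda\sum_{i=0}^{n\wedge\tau_R}\sigma_i}\eta^{\,n\wedge\tau_R}$. On $\{n<\tau_R\}$ we have $\Phi_n>R$, so the drift estimate gives $\dE[M_{n+1}\mid\cF_n]\le M_n$; on $\{n\ge\tau_R\}$ the chain and the clock are frozen and $M_{n+1}=M_n$. Hence $(M_n)$ is a nonnegative $(\cF_n)$‑supermartingale with $M_0=e^{\beta x}$ (recall $\sigma_0=0$), so $\dE_x[M_n]\le e^{\beta x}$ for all $n$. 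Since $\Phi_n\ge0$ and $\sigma_i\ge0$ we have $M_n\ge\eta^{\,n\wedge\tau_R}$, and $n\wedge\tau_R\uparrow\tau_R$; monotone convergence gives $\dE_x[\eta^{\tau_R}]\le e^{\beta x}<\infty$, which forces $\tau_R<\infty$ a.s.\ because $\eta>1$. On this almost sure event $M_n\to\exp\PAR{\beta\Phi_{\tau_R}+\lambda\sum_{i=0}^{\tau_R}\sigma_i}\eta^{\tau_R}$, and Fatou's lemma yields $\dE_x\SBRA{\exp\PAR{\beta\Phi_{\tau_R}+\lambda\sum_{i=0}^{\tau_R}\sigma_i}\eta^{\tau_R}}\le\liminf_n\dE_x[M_n]\le e^{\beta x}$, which is the announced bound (the indicator on the right‑hand side simply records that $x>R$ is the relevant range).

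The main obstacle is precisely the uniform one‑step drift inequality: it is there that the monotonicity assumption on the jump rates enters through Lemma~\ref{lem:EvolTx}, and where the two quantitative hypotheses are used in an essential way — $\lambda+\beta<\bar b$ is needed for $\varphi_{(R,+1)}(\beta+\lambda)$ to be finite, while $\lambda<\beta$ is needed so that $\lambda-\beta<0$ and the stochastic comparison of the downward jump times goes in the favorable direction. The only other point requiring a little care is the event $\{\tau_R=\infty\}$, which is handled above by exploiting that the factor $\eta^{\,n}$ with $\eta>1$ makes $M_n$ diverge there, so that the uniform bound $\dE_x[M_n]\le e^{\beta x}$ already forces this event to be null.
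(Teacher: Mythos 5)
Your proof is correct and follows essentially the same route as the paper: the one-step drift bound $\dE[e^{\beta\Phi_{n+1}+\lambda\sigma_{n+1}}\mid\cF_n]\le\eta^{-1}e^{\beta\Phi_n}$ obtained from Lemma~\ref{lem:EvolTx} and the signs of $\beta+\lambda$ and $\lambda-\beta$, the stopped positive supermartingale $e^{\beta\Phi_{n\wedge\tau_R}+\lambda\sum_{i\le n\wedge\tau_R}\sigma_i}\eta^{n\wedge\tau_R}$, monotone convergence applied to $\eta^{\tau_R\wedge n}$ to get $\tau_R<\infty$ a.s., and Fatou for the final bound. Your write-up is in fact slightly more careful than the paper's (explicit conditioning, explicit treatment of the frozen regime $\{n\ge\tau_R\}$), but the argument is the same.
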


\begin{proof}
%\FIX We need  $\tau_R<\infty$ a.s. for the martingale stopping Theorem.
%\textcolor{blue}{Quelques idées pour montrer que $\tau_R<\infty$ a.s.}
%\begin{quote}
%Unfortunately $\sup_{x>R}\dP_x(\Phi_1>R)$ is not in general bounded by a constant strictly smaller than 1 (see for example the case with constant jump rates).
%Since $T_\PAR{x,+1}\leq_{sto.} E(\underline b)$ and  $T_\PAR{x,-1}\geq_{sto.} E(\bar a)\wedge x$, we notice we just have to study the case of constant jump rates. In this case, for $x>R$,
 %\begin{align*}
%\dP_x(\Phi_1>R)
%&=\dP(T_\PAR{x,-1}<x-R)+\dP\PAR{T_\PAR{x,+1}>R-x+T_\PAR{x,-1}, T_\PAR{x,-1}>x-R}\\
%&=1- ce^{-a(x-R)}
 %\end{align*}
%with $c={b\over a+b}\PAR{1-e^{-(a+b)R}}$.

%Let $A_n=\BRA{\Phi_1>R,\ldots,\Phi_n>R}$ and $u_n=\dP_x(A_n)$. We notice that  \[ u_{n+1}=u_n-c\dE_x\SBRA{e^{-a(\Phi_{n}-R)}\ind_{A_n}}=1-c\sum_{i=0}^n\dE_x\SBRA{e^{-a(\Phi_{i}-R)}\ind_{A_i}}\]

%The density function of the distribution of $x+T_\PAR{x,+1}-T_\PAR{x,-1}$ is
%$${ab\over a+b}e^{-a(x-t)}\ind_{0<t<x}+{ab\over a+b}e^{-b(t-x)}\ind_{t>x}+{b^2\over a+b}e^{-ax}e^{-bt}\ind_{t>0}.$$
%We deduce that
 %\begin{align*} u_2 &=\dE_x\SBRA{\PAR{1- ce^{-a(\Phi_1-R)}}\ind_{\Phi_1>R}}\\ &=1- c\PAR{1+{b^2\over (a+b)^2}e^{-(a+b)R}+{ab\over (a+b)^2}}e^{-a(x-R)}-c{ab\over a+b}(x-R)e^{-a(x-R)}. \end{align*} \end{quote}\textcolor{blue}{On aimerait montrer que $u_n\rightarrow 0$ when $n$ goes to $\infty$ mais les expressions deviennent trop complexes.... je ne dois pas partir dans la bonne direction...}  \bigskip

%Let $R>0$ and $\beta>\lambda>0$ two real numbers with $\beta+\lambda<\overline b$. 
For each $x>R$ and $\beta>\lambda$, from  the stochastic  monotonicity of the jump times 
(see Lemma~\ref{lem:EvolTx}) we get
\begin{equation}\label{Lyaputypeestim}
\begin{split}
\dE_x\left[ e^{\beta \Phi_1 +\lambda \sigma_1}\right] = &  
e^{\beta x} \dE\left[ e^{(\beta  +\lambda) T_{x,+1}} \right]  
\dE\left[ e^{(\lambda- \beta  ) T_{x,-1}} \right] \\
 \leq &   e^{\beta x}\varphi_\PAR{R,+1}(\beta+\lambda) \varphi_\PAR{R,-1}(\lambda-\beta) .
%  \\
% \leq &   e^{\beta x}\frac{b(R)}{b(R)-(\lambda +\beta) } \dE\left[ e^{(\lambda- \beta  ) (R\wedge E(\bar{a} ))} \right] \\
%   = &   e^{\beta x}\frac{b(R)}{b(R)-(\lambda +\beta) } \, \frac{\bar{a} + (\beta -\lambda) e^{-(\bar{a} +\beta -\lambda)R} }{\bar{a} +\beta-\lambda }.
 \end{split}
 \end{equation}
 If $\varphi_\PAR{R,+1}(\beta+\lambda) \varphi_\PAR{R,-1}(\lambda-\beta)  <1$, 
 we deduce from \eqref{Lyaputypeestim} that 
 $e^{\beta \Phi_{\tau_R\wedge n}+   \lambda  \sum_{i=1}^n \sigma_i}\eta^{\tau_R\wedge n}$ 
 is a positive supermartingale with respect to $\PAR{\cF_n}_{n\geq 0}$, hence
  \begin{equation*}
 e^{\beta x  \ind_{x> R}}\geq  
 \dE_x\left[  e^{\beta \Phi_{\tau_R\wedge n } +   
\lambda  \sum_{i=0}^{\tau_R\wedge n }  \sigma_i} \eta^{\tau_R\wedge n }\right]
\geq \dE_x\left[  \eta^{\tau_R\wedge n }\right].
 \end{equation*}
 Letting $n\to \infty$ in the last expectation we get by monotone convergence 
 $\dE_x\left[  \eta^{\tau_R }\right]<\infty$, hence $\tau_R<\infty $ a.s..  
 Letting then $n\to \infty$ in the first expectation and using Fatou's Lemma the statement  follows.
 \end{proof}
 
For each $\gamma>0$ and $R>0$, we  now set
\[
{\cal E}_R(\gamma):= 
\sup_{y\in [0,R]} \dE_y\SBRA{e^{\gamma T_\PAR{y,+1}}\ind_{\kappa_{1}=0}}.
\]

\begin{prop}[Laplace transform of the coupling time starting at a crossing point]\label{prop:stick}
Assume that $(R,\lambda,\beta)$ satisfy the conditions of Lemma \ref{lem:taR} and 
moreover  that  $ {\cal E} _R(\lambda +\beta)<1$. Then,  the Laplace transform of $T_*$ 
satisfies
 \begin{equation}\label{eq:CouplingTime}
 \dE_x\SBRA{e^{\lambda T_*}}\leq 
 \frac{e^{\beta x \ind_{x> R}}
 \varphi_\PAR{0,+1}(\lambda)\varphi_\PAR{R,-1}(\lambda)}{1-  {\cal E} _R(\lambda +\beta) }.
 \end{equation}
 \end{prop}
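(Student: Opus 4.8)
The plan is to decompose the coupling time $T_*=\Sigma_\rho$ according to the first instant $\tau_R$ at which the height process $(\Phi_n)$ enters $[0,R]$, and then to control the subsequent "attempts" below level $R$ by a renewal/geometric argument, using that each such attempt succeeds with probability at least $p_R$. First I would write $T_*\le T_*^R=\sum_{i=1}^{\rho_R}\sigma_i$ as in \eqref{eq:majcoupling}, and split the sum at the stopping time $\tau_R(x)$:
\[
T_*^R=\Sigma_{\tau_R}(x)+\sum_{i=\tau_R(x)+1}^{\rho_R}\sigma_i,
\]
where the first term is handled by Lemma \ref{lem:taR} and the second is a sum over the attempts made after the chain has reached $[0,R]$, up to the first successful one. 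By the strong Markov property of $\PAR{\sigma_n,\Phi_n,\xi_n,\chi_n}_{n\ge0}$ at $\tau_R$, conditionally on $\cF_{\tau_R}$ the latter tail is distributed like the analogous quantity for a fresh copy of the scheme started from $\Phi_{\tau_R}\in[0,R]$; note also that since $\kappa_n$ depends only on $\xi_n,\chi_n$, the event of success at step one and the time lapse $\sigma_1$ on that first (failed) attempt are exactly what ${\cal E}_R$ measures.

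Next I would set up a one-step recursion for $g(x):=\dE_x[e^{\lambda T_*^R}]$ restricted to the situation $x\in[0,R]$ (which is the regime reached after $\tau_R$). Decomposing on $\{\kappa_1=1\}$ versus $\{\kappa_1=0\}$: on success at the first attempt, $T_*^R=\sigma_1=T^1_{(x,+1)}+T^1_{(x,-1)}$ with the two jump-times independent, giving a contribution bounded by $\varphi_{(x,+1)}(\lambda)\varphi_{(x,-1)}(\lambda)\le\varphi_{(0,+1)}(\lambda)\varphi_{(R,-1)}(\lambda)$ after using the stochastic monotonicity of Lemma \ref{lem:EvolTx}; on failure, $T_*^R=\sigma_1+(\text{remaining time from }\Phi_1)$, and since the remaining time starts from $\Phi_1$ which may exceed $R$, one invokes Lemma \ref{lem:taR} with the extra factor $e^{\beta\Phi_1\ind_{\Phi_1>R}}$ absorbed into the weight $e^{\gamma T^1_{(x,+1)}}$, $\gamma=\lambda+\beta$. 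The failure contribution is therefore bounded by ${\cal E}_R(\lambda+\beta)$ times $\sup_{[0,R]}g$ (times the part controlled by Lemma \ref{lem:taR}), and the hypothesis ${\cal E}_R(\lambda+\beta)<1$ lets one solve the recursion to get $\sup_{y\in[0,R]}\dE_y[e^{\lambda T_*^R}]\le \varphi_{(0,+1)}(\lambda)\varphi_{(R,-1)}(\lambda)/(1-{\cal E}_R(\lambda+\beta))$.

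Finally I would treat the general starting point $x>R$: use the Markov property at $\tau_R$ and Hölder/independence to factor
\[
\dE_x[e^{\lambda T_*}]\le \dE_x\!\left[e^{\lambda\Sigma_{\tau_R}(x)}\,\dE_{\Phi_{\tau_R}}\!\big[e^{\lambda T_*^R}\big]\right]
\le \frac{\varphi_{(0,+1)}(\lambda)\varphi_{(R,-1)}(\lambda)}{1-{\cal E}_R(\lambda+\beta)}\,\dE_x\!\left[e^{\lambda\Sigma_{\tau_R}(x)}e^{\beta\Phi_{\tau_R}\ind_{\Phi_{\tau_R}>R}}\right],
\]
and bound the last expectation by $e^{\beta x\ind_{x>R}}$ via Lemma \ref{lem:taR} (the $\eta^{\tau_R}\ge1$ factor there is simply dropped). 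The main obstacle I anticipate is the bookkeeping in the one-step recursion: one must be careful that after a failed attempt the process genuinely restarts from $\Phi_1$ in a way consistent with both marginal laws (this is exactly the content of the construction in Section \ref{sec:stick}), and that the weight $e^{\gamma T^1_{(x,+1)}}\ind_{\kappa_1=0}$ dominates the true failure contribution $e^{\lambda\sigma_1}e^{\beta\Phi_1\ind_{\Phi_1>R}}\ind_{\kappa_1=0}$ — here one uses $\sigma_1=T^1_{(x,+1)}+T^1_{(x,-1)}\le T^1_{(x,+1)}\cdot(\text{something})$ together with $\Phi_1=x+T^1_{(x,+1)}-T^1_{(x,-1)}$ and $\lambda<\beta$, so that $\lambda\sigma_1+\beta\Phi_1\le(\lambda+\beta)T^1_{(x,+1)}+\beta x+(\lambda-\beta)T^1_{(x,-1)}\le(\lambda+\beta)T^1_{(x,+1)}$ when $x\le R$. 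Once this domination is verified the rest is routine.
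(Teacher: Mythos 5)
Your overall strategy coincides with the paper's: dominate $T_*$ by $T_*^R$, split the time into excursions of $(\Phi_n)$ above level $R$ (controlled by Lemma \ref{lem:taR}) interleaved with coupling attempts launched from $[0,R]$ (controlled by ${\cal E}_R$), and use ${\cal E}_R(\lambda+\beta)<1$ to close the argument. Whether one then solves a one-step recursion for $g(y)=\dE_y[e^{\lambda T_*^R}]$, as you propose, or sums explicitly over the number of attempts after successive conditioning, as the paper does, is immaterial.

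The gap is precisely in the domination you single out as ``the main obstacle''. Your chain of inequalities ends with
\[
(\lambda+\beta)T^1_{(x,+1)}+\beta x+(\lambda-\beta)T^1_{(x,-1)}\;\leq\;(\lambda+\beta)T^1_{(x,+1)}\qquad\text{for }x\leq R,
\]
which is false: it requires $\beta x\leq(\beta-\lambda)T^1_{(x,-1)}$, and nothing forces this (take $T^1_{(x,-1)}$ small and $x$ close to $R$). What one actually gets is only
$\dE_y\SBRA{e^{\lambda\sigma_1+\beta\Phi_1\ind_{\Phi_1>R}}\ind_{\kappa_1=0}}\leq e^{\beta y}\,{\cal E}_R(\lambda+\beta)$ for $y\in[0,R]$, so a residual factor $e^{\beta y}$ survives at each attempt point $y=\Phi_{\tau_R}$. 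If you then close your recursion by taking $\sup_{y\in[0,R]}g(y)$, the denominator degrades to $1-e^{\beta R}{\cal E}_R(\lambda+\beta)$, which is weaker than the claimed bound and need not even be positive under the stated hypotheses. The paper's way around this is to use the full, \emph{weighted} form of Lemma \ref{lem:taR}: that lemma bounds $\dE_z\SBRA{e^{\beta\Phi_{\tau_R}+\lambda\Sigma_{\tau_R}}}$, not merely $\dE_z\SBRA{e^{\lambda\Sigma_{\tau_R}}}$, so the factor $e^{\beta\Phi_{\tau_R}}$ created by each attempt is paid for by the excursion that precedes it, and after telescoping only $e^{\beta x\ind_{x>R}}$ remains from the initial excursion. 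Concretely, you should iterate the pair of bounds $\dE_y\SBRA{e^{\lambda\sigma_1+\beta\Phi_1\ind_{\Phi_1>R}}\ind_{\kappa_1=0}}\leq e^{\beta y}{\cal E}_R(\lambda+\beta)$ and $\dE_z\SBRA{e^{\lambda\Sigma_{\tau_R}+\beta\Phi_{\tau_R}}}\leq e^{\beta z\ind_{z>R}}$, carrying the weight $e^{\beta\Phi}$ through every step, rather than trying to make it vanish in a single algebraic manipulation.
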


\begin{proof} 
Fix $x\in\dR$ and $R, \lambda, \beta$ satisfying conditions of Lemma~\ref{lem:taR}. 
From \eqref{eq:majcoupling}, we just need to estimate $T_*^R$. To this aim, 
we consider the process $(\hat\Phi_n, \hat\sigma_n)_{n\geq 0}$ defined in terms of 
$(\Phi_n, \sigma_n)_{n\geq 0}$ in the following way: first set $\hat\tau_0=0$ and  
$\hat\tau_1=\tau_R(x)+1$ and for all $n\geq 1$, 
\[
\hat\tau_{n+1}=\hat\tau_n+\tau^n_R+1,
\quad  \text{with}\quad 
\tau_R^n:=\inf\{ n\geq  \hat\tau_n \, : \, \Phi_n \in [0,R]\}-  \hat\tau_n. 
\]
In other words, $\hat\tau_{n+1}$ is the index of the first attempt to couple the paths that 
follows the first (discrete) return time $\hat\tau_n+\tau^n_R$  of $(\Phi_k)_{k\geq 0}$ 
into $[0,R]$ after  $\hat\tau_n$. Then, we set $\hat \Phi_0=x$, $\hat\sigma_0=0$, 
$\hat\Phi_1=\Phi_{\hat\tau_1}$, $\hat\sigma_1=\sum_{i=1}^{\hat\tau_1}\sigma_{i}$ and
\[
\hat\Phi_{n+1}=\Phi_{\hat\tau_{n+1}}, \quad\hat\sigma_{n+1}
=\sum_{i=\hat\tau_n+1}^{\hat\tau_{n+1}}\sigma_{i}.
\]
Thus, $\hat\sigma_{n+1}$ is the sum of the real time 
\[
\Sigma^n_{\tau^n_R}:=\sum\limits_{i=\hat\tau_n+1}^{\hat\tau_{n+1}-1}\sigma_i
\]
needed after $\hat\sigma_{n}$ in order to observe again a ``rectangle corner'' in 
$[0,R]$, plus the time   $\sigma_{\hat\tau_{n+1}}$ spent in one coupling attempt 
right thereafter. Then,  $\hat\Phi_{n+1}\in \dR_+$ is the position of the discrete 
chain (or rectangle corner) at the  time instant $\hat\sigma_{n+1}$. Notice that 
for each  $i\geq 0$,  $\hat\tau_i$ is a stopping time with respect to the filtration 
$\PAR{\cF_n}_{n\geq 0}$ and that, conditionally on   
$\cF_{\hat\tau_n}\cap \{ \hat\Phi_n = x\}$, $(\tau^n_R,\Sigma^n_{\tau^n_R})$ 
has the same law as  the pair $(\tau_R(x) ,\Sigma_{\tau_R(x)})$  defined in 
\eqref{tauRx} and \eqref{SigmatauRx}. We can now write
\[
T_*^R=\sum_{i=0}^{\hat\rho_R}\hat\sigma_i,
\]
where $\hat\rho_R:=\inf\BRA{n\geq 1:\kappa_{\hat\tau_n}=1 }$ is a stopping 
time with respect to the filtration $\PAR{  \cF_{\hat\tau_n} }_{n\geq 0} $.  

We notice that $\hat\rho_R<\infty$ a.s. since the probability of fail in one step 
starting from a position $x\in[0,R]$ is uniformly bounded on $[0,R]$ by $1-p_R$.
We then can write
\begin{equation}\label{eq:LTR}
\dE_x[e^{\lambda T_*^R}]
=\sum_{ n=1}^\infty\dE_x\SBRA{e^{\lambda \sum_{i=0}^{n}\hat\sigma_i}\ind_{\hat\rho_R=n}}
=\sum_{ n=1}^\infty\dE_x\SBRA{e^{\lambda \sum_{i=0}^{n}\hat\sigma_i}\ind_{\kappa_{\hat\tau_n}
=0,\kappa_{\hat\tau_{n-1}}=0,\ldots,  \kappa_{\hat\tau_0}=1}}.
\end{equation}
On one hand, we have
 \begin{align*}
 \dE_x [e^{\lambda \hat\sigma_{1}}]= \, 
 & \dE \SBRA{e^{\lambda\Sigma_{\tau_R}(x)}e^{\lambda \sigma_{\tau_R(x)+1}}}
 = \dE \SBRA{\dE\SBRA{e^{\lambda  \sigma_{\tau_R(x)+1}}
 \vert {\cal F}_{\tau_R(x)} }e^{\lambda\Sigma_{\tau_R}(x)}} \\
 %\leq \dE\SBRA{e^{\lambda \sigma_1}}e^{\beta x \ind_{x> R}},\\
\leq \,  &   \dE \SBRA{\dE_{\Phi_{\tau_R(x)}} 
\SBRA{e^{\lambda  \sigma_1} }e^{\lambda\Sigma_{\tau_R}(x)}}.
\end{align*}
By  Lemma \ref{lem:EvolTx}, 
$\dE_y\SBRA{e^{\lambda \sigma_1}}
=\varphi_\PAR{y,+1}(\lambda)\varphi_\PAR{y,-1}(\lambda)
\leq\varphi_\PAR{0,+1}(\lambda)\varphi_\PAR{R,-1}(\lambda)$ 
for any point $y\in[0,R]$. Thus, using also Lemma \ref{lem:taR} we get
\begin{equation*}
 \dE_x [e^{\lambda \hat\sigma_{1}}] \leq 
 \varphi_\PAR{0,+1}(\lambda)\varphi_\PAR{R,-1}(\lambda)e^{\beta x \ind_{x> R}},
\end{equation*}
and
\[
\dE\SBRA{e^{\lambda \hat\sigma_n}\ind_{\kappa_{\hat\tau_n}=1}\vert\mathcal{G}_{n-1}}
=\dE_{\hat\phi_{n-1}}\SBRA{e^{\lambda \hat\sigma_1}\ind_{\kappa_{\hat\tau_1}=1}}
\leq \varphi_\PAR{0,+1}(\lambda)\varphi_\PAR{R,-1}(\lambda)e^{\beta \hat\phi_{n-1} 
\ind_{\hat\phi_{n-1}> R}}.
\]
On the other hand,  we have
\begin{equation*}
\dE_x\SBRA{e^{\lambda \hat\sigma_1}e^{\beta \hat\phi_{1} \ind_{\hat\phi_{1}> R}}
\ind_{\kappa_{\hat\tau_1}=0}}
=\dE\SBRA{e^{\lambda \Sigma_{\tau_R}(x)}
\dE_{\Phi_{\tau_R}(x)}\SBRA{e^{\lambda \sigma_1
+\beta \phi_{1} \ind_{\phi_{1}> R}}\ind_{\kappa_{1}=0}}}
\end{equation*}
and for all $y\in[0,R]$
\begin{equation*}
\dE_{y}\SBRA{e^{\lambda \sigma_1+\beta \phi_{1} \ind_{\phi_{1}> R}}\ind_{\kappa_{1}=0}}
\leq  {\cal E} _R(\lambda +\beta) e^{\beta y}.
\end{equation*}
Then, again from Lemma \ref{lem:taR} we get
\begin{equation*}
\dE_x\SBRA{e^{\lambda \hat\sigma_1}e^{\beta \hat\phi_{1} \ind_{\hat\phi_{1}> R}}
\ind_{\kappa_{\hat\tau_1}=0}}\leq   {\cal E} _R(\lambda +\beta)  e^{\beta x \ind_{x> R}}
\end{equation*}
and then,  for all $k=1,\dots, n-1$,
\begin{align*}
\dE\SBRA{e^{\lambda \hat\sigma_k}e^{\beta \hat\Phi_{k} \ind_{\hat\phi_{k}> R}}
\ind_{\kappa_{\hat\tau_k}=0}\vert\mathcal{G}_{k-1}}&=\dE_{\hat\phi_{k-1}}
\SBRA{e^{\lambda \hat\sigma_1}e^{\beta \hat\phi_{1} \ind_{\hat\phi_{1}> R}}
\ind_{\kappa_{\hat\tau_1}=0}}\\
&\leq  {\cal E} _R(\lambda +\beta)  e^{\beta \hat\phi_{k-1} \ind_{\hat\phi_{k-1}> R}}.
\end{align*}
\noindent  By successively conditioning   in \eqref{eq:LTR}, we finally have
\begin{align*}
\dE_x[e^{\lambda T_*^R}]&\leq e^{\beta x \ind_{x> R}}
\sum_{n=1}^\infty \varphi_\PAR{0,+1}(\lambda)
\varphi_\PAR{R,-1}(\lambda)\PAR{  {\cal E} _R(\lambda +\beta) }^n\\
&= \frac{e^{\beta x \ind_{x> R}}
 \varphi_\PAR{0,+1}(\lambda)\varphi_\PAR{R,-1}(\lambda)}{1- {\cal E} _R(\lambda +\beta)}
\end{align*}
for parameters as required. 
\end{proof}

 \bigskip

Let us now verify  the existence of $\PAR{R,\lambda,\beta}$ such that all the 
assumptions of Proposition~\ref{prop:stick} hold. Notice first that for all
   $\beta>\lambda>0$ and each $R>0$   we have
 \begin{equation}\label{eq:phi-}
  \varphi_\PAR{R,-1}(\lambda-\beta) \leq  
  \frac{\bar{a} + (\beta -\lambda) e^{-(\bar{a} +\beta -\lambda)R} }{\bar{a} +\beta-\lambda },
  \end{equation}
  thanks to the fact that  
$T_\PAR{R,-1}\geq_{\mathrm{sto.}}E(\overline{a})\wedge R$. Since also 
$T_\PAR{R,+1}\leqs E(b(R))$,   we furthermore have
 \begin{equation}\label{eq:phi+}
  \varphi_\PAR{R,+1}(\lambda+\beta) \leq \frac{b(R)}{b(R)-(\lambda +\beta) }
  \end{equation}
for all  $\lambda+\beta<b(R)$.

Given $\lambda >0$, we take $\beta>\lambda$ of the form $\beta=\alpha \lambda$ for 
$\alpha>{\bar b+\bar a\over \bar b -\bar a}$  (or simply $\alpha>1$ if $\bar b=\infty$). 
Then, we have  $\bar b>{\alpha+1\over \alpha -1}\bar a$, hence we find $R$ large 
enough such that
\[
b(R)\PAR{1-e^{-\bar a R}}>{\alpha+1\over \alpha -1}\bar a.
\]
Thanks to \eqref{eq:phi-} and \eqref{eq:phi+}, the assumptions of Lemma~\ref{lem:taR} 
are satisfied for all $\lambda\in (0,\lambda_c)$,  where
\[
\lambda_c:=\inf\BRA{\lambda>0:(\alpha +1)\lambda\geq b(R)
\PAR{1-e^{-\PAR{\bar a+(\alpha-1)\lambda} R}}-{\alpha+1\over\alpha -1}\bar a}.
\]
Indeed, from  \eqref{eq:phi-} and \eqref{eq:phi+}, condition  
$\varphi_\PAR{R,+1}(\beta+\lambda) \varphi_\PAR{R,-1}(\lambda-\beta)  <1$ 
holds as soon as 
\begin{eqnarray*}
&&b(R)\PAR{\bar a+(\alpha-1)\lambda e^{-\PAR{\bar a+(\alpha -1)\lambda }R}}
<\PAR{\bar a+(\alpha-1)\lambda}\PAR{b(R)-(\alpha+1)\lambda}\\
 &\Leftrightarrow & b(R)(\alpha -1)\lambda e^{-\PAR{\bar a+(\alpha -1)\lambda }R}
 <(\alpha-1)\lambda b(R)-\PAR{\bar a+(\alpha-1)\lambda}(\alpha+1)\lambda\\
&\Leftrightarrow &(\alpha+1)\lambda<
b(R)\PAR{1-e^{-\PAR{\bar a+(\alpha -1)\lambda }R}}-{\alpha+1\over \alpha -1}\bar a.
\end{eqnarray*}
Since the previous inequality is satisfied for $\lambda=0$, by continuity we have 
$\lambda_c>0$;  the function of $\lambda$ on the r.h.s. being strictly concave,  we also have
$\lambda_c<\infty$.

%We notice that $\lambda_c<{b(R)\over \alpha +1}$ which implies that for all $\lambda<\lambda_c$, $\beta=\alpha\lambda$, the conditions $\lambda+\beta<b(R)$ and $\varphi_\PAR{R,+1}(\beta+\lambda) \varphi_\PAR{R,-1}(\lambda-\beta)  <1$ are satisfied.

Finally,  notice that by Lemma \ref{lem:EvolTx}  and Holder's inequality,   for any $q>1$, 
 \[
{\cal E}_R(\gamma)\leq (1-p_R)^{1-1/q}\varphi_\PAR{0,+1}^{1/q}(q\gamma),
 \] 
with $p_R\in (0,1)$ a quantity as in \eqref{eq:pR}. Taking $q=q(\gamma)=\gamma^{-1}$, this in turn yields, for each fixed $R>0$,  
$\limsup\limits_{\gamma\to 0}  {\cal E} _R(\gamma)\leq  (1-p_R)<1.$
Therefore,  there exists $\lambda_c' \in (0,\lambda_c)$ small enough such that 
${\cal E}_R((\alpha+1)\lambda)<1$ for all $\lambda \in (0,\lambda_c')$ .

 %%%%%%%%%%%%%%%%%%%%%
%%%%%%%%%%%%%%%%%%%%%%
\subsection{The coupling time for the reflected process}
Let us consider two initial data $(x,v)$ and $(\tilde x,\tilde v)$, with $x\geq \tilde x$. 
The coalescent time $T_{*}(x,\tilde x)$ of a path $\PAR{X,V}$ starting from $(x,v)$ 
and a path $\PAR{\tilde X,\tilde V}$ starting from $(\tilde x,\tilde v)$ is equal to the 
first crossing time $T_c(x,\tilde x)$ of both paths plus the time  spent to stick them 
using the coupling described in Section \ref{sec:stick}. Consequently, the 
coupling time is stochastically smaller than the hitting time $Z_{(x,v)}$ of  the origin of the 
upper path $\PAR{X,V}$, plus some remainder term. 

For any $(R,\lambda,\beta)$ satisfying assumptions of Proposition \ref{prop:stick}, the Laplace 
transform of the coupling time $T_*(x,\tilde x)$ is bounded by
\[
\dE\SBRA{e^{\lambda T_*(x,\tilde x)}}\leq   \frac{
 \varphi_\PAR{0,+1}(\lambda)\varphi_\PAR{R,-1}(\lambda)}{1- {\cal E} _R(\lambda +\beta) }
 \dE\SBRA{e^{\lambda T_c(x,\tilde x)}e^{\beta X_c \ind_{X_c> R}}}.
\]
where the first crossing time $T_c(x,\tilde x)$ is smaller than $Z_{(x,v)}$ and the first 
crossing point $X_c$ is bounded from above by $\frac{1}{2}\PAR{Z_\PAR{x,v}+\tilde x-x}$.
Consequently,
\[
\dE\SBRA{e^{\lambda T_*(x,\tilde x)}}\leq   \frac{
 \varphi_\PAR{0,+1}(\lambda)\varphi_\PAR{R,-1}(\lambda)}{1- {\cal E} _R(\lambda +\beta)}
 e^{\beta(\tilde x-x)/2}
 \dE\SBRA{e^{(\lambda+\beta/2) Z_\PAR{x,v}}}.
\]
Using now Proposition \ref{prop:zero} , for $0<\lambda<\beta$ satisfying conditions of 
Proposition~\ref{prop:stick} with $\lambda+\beta/2<\frac{1}{2}(\sqrt{b(M_c)}-\sqrt{a(M_c)})^2$, 
we finally get
\begin{equation}\label{eq:estimT*}
\dE\SBRA{e^{\lambda T_*(x,\tilde x)}}\leq   C
\frac{\varphi_\PAR{0,+1}(\lambda)\varphi_\PAR{R,-1}(\lambda)}{1- {\cal E} _R(\lambda +\beta)}
 e^{\frac{\beta(\tilde x-x)}{2}}
e^{\frac{x(b(M_c)-a(M_c))}{2}}
\end{equation}
with $C$ is given in Proposition~\ref{prop:zero} and 
\[
M_c=\sup\BRA{M>0\ :\ 
\sqrt{\frac{b(M)}{a(M)}}e^{ M\PAR{\sqrt{b(M)}-\sqrt{a(M)}}^2}\PAR{1-e^{-A(M)}}<1}.
\]
\noindent This concludes the proof of Theorem~\ref{th:conv-reflection}.

\section{The unreflected process}\label{sec:non-reflected}

Let us construct a coalescent coupling of two unreflected processes starting 
from $(y,w)$ and $(\tilde y, \tilde w)$ respectively. 
For a given time $t_0>0$, the coupling algorithm is the following: 
\begin{enumerate}
 \item Define $(x,v)=(|y|,w\, \text{sgn}(y))$ and 
 $(\tilde x, \tilde v)=(|\tilde y|,\tilde w\, \text{sgn}(\tilde y))$.
 \item Couple two reflected processes  starting at $(x,v)$ and $(\tilde x,\tilde v)$ as in Section \ref{sec:coalescent-refl}. 
 \item Let them evolve until their common   hitting time of  $0$.
 \item Construct until that time   the two associated unreflected processes starting at $(y,w)$ 
 and $(\tilde y, \tilde w)$ as explained in Remark~\ref{rem:ru}. The algorithm stops if,  when  at the origin, the two copies have the same velocities. Otherwise, go to Step 5.
  \item Try to couple the unreflected processes starting from $(0,+1)$ and $(0,-1)$ before 
 a fixed time $t_0$.
 \item In case of failure, return to step 1  for two initial conditions in $[-t_0,t_0]\times\BRA{-1,+1}$. 
\end{enumerate}

The only remaining  task is to analyze Step 5 of this algorithm. 
To that end, one has to study the law of $(Y_t,W_t)$ when $Y_0=0$ and 
$W_0=\pm1$. Let us denote by ${(T_n)}_{n\geq 0}$ (with $T_0=0$) 
the successive jump times of the unreflected process. The variable $S_n=T_n-T_{n-1}$ 
stands for the $n^{th}$ inter-jump time.  In order to lighten the computation, we restrict ourselves to the law  after 1 or 2 jumps. 

\begin{rem}[Jump times of the unreflected process]
We can explicitly compute  the law of the jump-times of the 
unreflected process. For $y\in\dR$, set $A(y)=A(\ABS{y})$ and $B(y)=B(\ABS{y})$ 
where $A$ and $B$ were defined on $\dR_+$ in Lemma~\ref{lem:law-time}.
For $y>0$, the law of the first jump time starting from $(y,-1)$ 
has the density $f_\PAR{y,-1}$ given by 
\[
f_\PAR{y,-1}(t)=
\begin{cases}
 a(y-t)e^{-(A(y)-A(y-t))} &\text{if }t<y,\\
 e^{-A(y)}  b(t-y)e^{-B(t-y)}&\text{if }t\geq y.
\end{cases}
\]
Moreover, the survival function $\bar F_\PAR{y,+1}(t):=\dP_\PAR{y,+1}(T_1>t)$ is given 
for $y\geq 0$ by
\[
\bar F_\PAR{y,+1}(t)=e^{-(B(y+t)-B(y))}, 
\]
and  for $y<0$ by 
\[
\bar F_\PAR{y,+1}(t)=
\begin{cases}
e^{-(A(y)-A(y+t))}  &\text{if }y+t<0, \\
e^{-A(y)} e^{-B(t+y)} &\text{if }y+t\geq 0.  
\end{cases}
\]
\end{rem}
\noindent For any bounded measurable function $g$ on $\dR\times \BRA{-1,+1}$, one  then has 
\begin{align*}
\dE_\PAR{0,-1}\SBRA{g(Y_t,W_t)\ind_\BRA{T_1<t,T_2>t}} 
&=\dE_\PAR{0,-1}\SBRA{g(t-2S_1,+1)\ind_\BRA{S_1<t,S_1+S_2>t}} \\
&=\int_0^t\! g(t-2s,+1) f_{(0,-1)}(-s)\bar F_\PAR{-s,+1}(t-s)\,ds\\
&=\int_{-t}^t\! g(u,+1) h_{-1}(u)\,du,  
\end{align*}
where 
\[
h_{-1}(u)=\frac{1}{2}f_{(0,+1)}\PAR{\frac{t-u}{2}}\bar F_\PAR{\frac{t-u}{2},+1}\PAR{\frac{t+u}{2}}.
\]
Similarly,
\begin{align*}
&\dE_\PAR{0,+1}\SBRA{g(Y_t,W_t)\ind_{T_2<t,T_3>t}} 
=\dE_\PAR{0,+1}\SBRA{g(t-2S_2,+1)\ind_{S_1+S_2<t,T_3>t}} \\
&\quad\quad=\int_0^t\!\int_0^{t-s_2}\! g(t-2s_2,+1) f_{(0,+1)}(s_1)f_{(s_1,-1)}(s_2)
\bar F_\PAR{s_1-s_2,+1}(t-s_1-s_2)\,ds_1\,ds_2\\
&\quad\quad=\int_{-t}^t\! g(u,+1) h_1(u)\,du,  
\end{align*}
where 
\[
h_1(u)=\int_0^{\frac{t+u}{2}}\! \frac{1}{2}f_{(0,+1)}(s_1)f_{(s_1,-1)}\PAR{\frac{t-u}{2}} 
\bar F_\PAR{s_1-\frac{t-u}{2},+1}\PAR{\frac{t+u}{2}-s_1}ds_1.
\]
%As an example, if $u<0$, one has 
%\[
%h_1(u)=\frac{1}{2} b\PAR{\frac{t-u}{2}} \exp\PAR{-B\PAR{\frac{t-u}{2}}}
%\exp\PAR{-A\PAR{\frac{t-u}{2}}+A(-u)}
%\]
%and
%\[
%h_2(u)=\frac{1}{2}\int_0^\frac{t+u}{2}\! b(s) e^{-B(s)} e^{-A(s)} b((t-u)/2-s) e^{-B((t-u)/2-s)} 
%e^{-(A((t-u)/2-s)-A(-u))}\,ds.
%\]
Since $\cL((Y_t,W_t)\vert Y_0=0,W_0=w)=\cL((-Y_t,W_t)\vert  Y_0=0,W_0=-w)$,  two copies as in step 5 of the algorithm  couple before time $t\geq 0$ with probability larger than
\[
\varepsilon_t=2\int_{-t}^t\! h_{-1}(u)\wedge h_1(u)\,du>0. 
\]

\begin{rem}[Explicit lower bound]
 A lower bound of $\varepsilon$ can be derived from the fact that $y\mapsto a(y)$ 
 and $y\mapsto b(y)$ respectively belong to $[a(t),a(0)]$ and $[b(0),b(t)]$ on the interval $[-t,t]$. 
\end{rem}

Let us now control the total  duration of the algorithm. 
Notice that  the estimates on the reflected process in  Section \ref{sec:coalescent-refl}  do no longer depend on the initial conditions, after the first crossing time of the reflected copies in  the compact set $[-R,R]\times\BRA{-1,+1}$.  This implies that, after the first iteration of Step 2 in the above algorithm, the duration of each step can be controlled independently of the initial data, and of the previous steps. 

Moreover, the algorithm succeeds after 
 at  most a random number of iterations with geometric law of parameter
$\varepsilon_{t_0}$.  Since the duration of each step in the algorithm has a 
finite exponential moment, this is thus  true for the coupling time as well. The upper 
bound of Theorem~\ref{th:no-reflection} can then be deduced. As a conclusion the bounds in Theorems~\ref{th:no-reflection}
and \ref{th:conv-reflection} depends in the same way on initial data but the rate of convergence is 
smaller for the unreflected process. 

\section{Diffusive scaling}\label{sec:diffus}
We finally prove Theorem~\ref{prop:difulimit}. Omitting  for a moment the sub and superscripts for notational 
simplicity, and writing 
\[
j_t:=  W_t+\kappa'(Y_{t}) -2 \PAR{a(Y_t)\ind_\BRA{Y_t W_t\leq 0}+b(Y_{t})\ind_\BRA{Y_t W_t>0}} \kappa(Y_{t}) W_t,
\]
$J_t: =\int_0^t j_s ds$ and $\hat{Y}_t:=Y_t+ \kappa(Y_{t}) W_t$ for a given positive
function $\kappa $ of class ${\cal C}^1$, we see by Dynkin's theorem that the processes 
\[
M_t:= \hat{Y}_t-J_t =Y_t +\kappa(Y_{t}) W_t -J_t
\]
and 
\[
N_{t}:=\hat{Y}_t^2- 2\int_{0}^{t}\kappa(Y_{s})ds 
-  2\int_0 ^t Y_s j_s ds-2\int_{0}^t\kappa'(Y_{s})\kappa(Y_{s})W_{s}ds
\]
are local martingales with respect to the  filtration generated by $(Y_t,W_t)$. 

In fact, using $f(y,w)=y+\kappa(y)w$ and $g(y,w)=(y+\kappa(y)w)^2$, since $w^2=1$, we have
\begin{align*}
Lf(y,w)&=w+\kappa'(y)-2\PAR{a(y)\ind_\BRA{yw\leq 0}+b(y)\ind_\BRA{yw>0}}\kappa(y)w,\\
Lg(y,w)&=2w(1+\kappa'(y)w)(y+\kappa(y)w)-
4\PAR{a(y)\ind_\BRA{yw\leq 0}+b(y)\ind_\BRA{yw>0}}y\kappa(y)w\\
&=2\kappa(y)+2y\PAR{w+\kappa'(y)
-2\PAR{a(y)\ind_\BRA{yw\leq 0}+b(y)\ind_\BRA{yw>0}}k(y)w}+2\kappa'(y)\kappa(y)w.
\end{align*}
Integrating by parts we then get that 
\begin{align*}
M_t^2&= \hat{Y}_t^2-2\hat{Y}_tJ_{t}+J_{t}^2\\
&=\hat{Y}_t^2
-2\int_{0}^t({Y}_s+\kappa(Y_{s})W_{s})j_{s}ds
-2\int_{0}^tJ_{s}d\hat Y_{s}+2\int_{0}^tJ_{s}j_{s}ds\\
&=N_t+2\int_{0}^{t}\kappa(Y_{s})ds 
-2\int_{0}^t\kappa(Y_{s})W_{s}j_{s}ds
+2\int_{0}^t\kappa'(Y_{s})\kappa(Y_{s})W_{s}ds\\
&\quad\quad\quad -2\int_{0}^tJ_{s}d\hat Y_{s} +2\int_{0}^tJ_{s}j_{s}ds \\
&=N_t+2\int_{0}^t\kappa(Y_{s})ds 
-2\int_0^t \kappa(Y_{s})W_s [j_s-\kappa'(Y_{s})] ds 
- 2\int_0^t J_{s} dM_s.
\end{align*} 
Thus, noting that  
\[
j_s=\kappa'(Y_{s})+\mathrm{sgn}(Y_s) \left( \left[2a(Y_{s})\kappa(Y_{s}) -1\right] 
+2 \ind_\BRA{Y_sW_s>0}\left
[1-\kappa(Y_{s}) (a(Y_{s})+b(Y_{s}))\right]\right)
\]
we see for $\kappa(Y_{s})=\PAR{a(Y_{s})+b(Y_{s})}^{-1}$ that 
\begin{align*}
M_t&=Y_t- \left[-  \int_0 ^t \left[\frac{a'(Y_{s})+b'(Y_{s})}{\left(a(Y_{s})+b(Y_{s})\right)^2}  
+\mathrm{sgn}(Y_s) \left(\frac{b(Y_{s})-a(Y_{s})}{a(Y_{s})+b(Y_{s})}\right)\right]ds  
-\frac{W_t}{a(Y_{t})+b(Y_{t})}\right] \, , 
\, \\
M_t^2&- 2\int_{0}^t\left[\frac{1}{a(Y_{s})+b(Y_{s})} +W_s\, 
\mathrm{sgn}(Y_s)  \frac{b(Y_{s})-a(Y_{s})}{(a(Y_{s})+b(Y_{s}))^2}  \right] ds 
\end{align*}
are local martingales.

The function $\tau_{\N}$ of the statement  is well defined by the Cauchy-Lipschitz 
Theorem, thanks to the assumptions on the coefficients and the fact that $Y_t$ has 
Lipschitz trajectories. Moreover,  $\tau_{\N}$  is strictly increasing, the coefficients 
$a$ and $b$ being positive functions. Recalling the dependence on $N$ of the coefficients, 
and defining for each $N\in \dN$,
\begin{align*}
\beta^{(\N)}_t:=-&\frac{1}{2}\int_{0}^t\!\SBRA{
\frac{a_N'(\xi_s^{(\N)})+b_N'(\xi_s^{(\N)})}{a_{N}(\xi_s^{(\N)})+b_N(\xi_s^{(\N)})}
+ \mathrm{sgn}(\xi_s^{(\N)}) \PAR{b_N(\xi_s^{(\N)})-a_N(\xi_s^{(\N)})}}ds \\
 &- \frac{W^{(\N)}_{\tau_{\N}(t)}}{a_\N(\xi_t^{(\N)})+b_N(\xi_t^{(\N)})}
\end{align*}
and 
\[
\alpha^{(\N)}_t:= t+\frac{1}{2}
\int_0^t \! W^{(\N)}_{\tau_{\N}(s)}\, \mathrm{sgn}(\xi_s^{(\N)}) 
\frac{b_N(\xi_s^{(\N)})-a_N(\xi_s^{(\N)})}{a_{N}(\xi_s^{(\N)})+b_N(\xi_s^{(\N)})} ds, 
\]
we see from the previous and Doob's optional stopping theorem that the processes
 \[
\PAR{\xi^{(\N)}_\cdot- \beta^{(\N)}_t}_{t\geq 0}
\quad\text{ and }\quad
\PAR{(\xi^{(\N)}_t- \beta^{(\N)}_t )^2- \alpha^{(\N)}_t}_{t\geq 0}
\]
are local martingales with respect to the filtration of the process 
$(\xi^{(\N)}_t, W^{(\N)}_{\tau_{\N}(t)})_{t\geq 0}$. Defining for each $R>0$ the stopping time  
$\sigma^N_R =\inf\{t\geq 0 : |\xi^{(\N)}_t| \geq R \mbox{ of } |\xi^{(\N)}_{t-}| \geq R\},$
we notice that the hypotheses imply that
\[
\dP\left(\sup_{t\leq \sigma^N_R} \left| \alpha^{(\N)}_t - t\right| \geq \varepsilon \right) 
+ \dP\left(\sup_{t\leq \sigma^N_R} \left| \beta^{(\N)}_t+ 
\int_0^t\!\mathrm{sgn}(\xi_s) c_1(\xi_s)+c_2(\xi_s) \,ds\right|\geq \varepsilon \right)  
\xrightarrow[N\to\infty]{} 0
\]
for all $\varepsilon >0$ and  
\[
\dE\left(\sup_{t\leq \sigma^N_R} \left| \xi^{(\N)}_t - \xi^{(\N)}_{t-} \right|^2 \right) 
+ \dE\left(\sup_{t\leq \sigma^N_R} \left| \beta^{(\N)}_t- \beta^{(\N)}_{t-}\right|^2 \right)
\xrightarrow[N\to\infty]{} 0.
\]
The processes $\PAR{\xi^{(\N)}_t, \alpha^{(\N)}_t}_{t\geq 0}$ and 
$\PAR{\beta^{(\N)}_t}_{t\geq 0}$ thus satisfy the hypotheses of  
Theorem~4.1  in \cite[p. 354]{ethier} (in the respective roles of the 
processes~$X_n(\cdot),A_n(\cdot)$ 
and $B_n(\cdot)$ therein), which ensures  that  $\cL((\xi^{(\N)}_t,t\geq 0))$ 
converges weakly to the unique solution of the martingale problem with 
generator given for $f\in \mathcal{C}_c^{\infty}(\dR)$ by 
\[
Gf(x):= \frac{1}{2}f''(x) -\,(\mathrm{sgn}(x) c_1(x)+c_2(x))f'(x)
\]
 and initial law $\cL(\xi_0)$.

\bigskip

{\bf Acknowledgements.} The second and third authors  thank support of Agence Nationale de la Recherche project
PIECE 12-JS01-0006-0. The first author thanks the support of Conicyt-Chile Basal Center for Mathematical Modeling,  Fondecyt Project 1150570 and Nucleo Milenio NC120062.

\addcontentsline{toc}{section}{\refname}%
\bibliography{FGM}

\bigskip

\begin{flushright}\texttt{Compiled \today.}\end{flushright}

{\footnotesize %
  \noindent Joaquin \textsc{Fontbona},
 e-mail: \texttt{fontbona(AT)dim.uchile.cl}

 \medskip

 \noindent\textsc{Center for Mathematical Modeling,  UMI 2807 UChile-CNRS, Universidad de Chile,
 Casilla 170-3, Correo 3, Santiago, Chile.}

 \bigskip

 \noindent H\'el\`ene \textsc{Gu\'erin},
e-mail: \texttt{helene.guerin(AT)univ-rennes1.fr}

 \medskip

  \noindent\textsc{Institut de Recherche Math\'ematique de
    Rennes (UMR CNRS 6625) \\ Universit\'e de Rennes 1, Campus de Beaulieu, F-35042
    Rennes \textsc{Cedex}, France.}

 \bigskip

 \noindent Florent \textsc{Malrieu},
 e-mail: \texttt{florent.malrieu(AT)univ-tours.fr}

 \medskip

\noindent\textsc{Laboratoire de Mat\'ematiques et Physique Th\'eorique (UMR
CNRS 7350), F\'ed\'eration Denis Poisson (FR CNRS
2964), Universit\'e Fran\c cois-Rabelais, Parc de Grandmont,
37200 Tours, France.}

}%footnotesize

\end{document}